\newcommand{\C}{\mathbb{C}}
\newcommand{\Z}{\mathbb{Z}}
\newcommand{\F}{\mathbb{F}}
\newcommand{\e}{\epsilon}
\newcommand{\ct}{\mathbb{1}}
\newtheorem{thm}{Theorem}[section]
\newtheorem{lem}[thm]{Lemma}
\newtheorem{cor}[thm]{Corollary}
\newtheorem{prop}[thm]{Proposition}
\newtheorem*{theorem*}{Theorem}
\theoremstyle{definition}
\newtheorem*{ex}{Example}
\newtheorem*{defn}{Definition}
\newtheorem*{notes}{Notes}
\newtheorem*{datum}{Datum}
\newtheorem*{ack}{Acknowledgments}
\begin{document}
\title{Kloosterman sums over finite Frobenius rings}
\keywords{Kloosterman sum, Frobenius ring.}
\subjclass[2010]{Primary: 11L05, 11T24. Secondary: 16L60, 16D50.}

\date{\today}
\author{Bogdan Nica}
\begin{abstract}
We study Kloosterman sums in a generalized ring-theoretic context, that of finite commutative Frobenius rings. We prove a number of identities for twisted Kloosterman sums, loosely clustered around moment computations. 
\end{abstract}

\address{\newline Department of Mathematics and Statistics \newline McGill University, Montreal}
\maketitle

\section*{Introduction}
A classical Kloosterman sum is an exponential sum of the form
\begin{align*}
S(m,n; q)=\sum_{u\in (\Z/q\Z)^\times} e_q(mu+nu^{-1})
\end{align*}
where $m,n,q$ are positive integers, $(\Z/q\Z)^\times$ denotes the unit group of the modular ring $\Z/q\Z$, and $e_q$ is the $q$-scaled exponential function $r\mapsto \exp(2\pi i r/q)$. These sums have played an important role in analytic number theory, starting with Kloosterman \cite{K26} up until recent developments, see \cite{M06} for instance. Two excellent references that highlight, among other things, the classical Kloosterman sums and their use in analytic number theory, are \cite{I} and \cite{IK}.

The formulation of Kloosterman sums is, however, distinctly algebraic: they are character sums that make sense over an arbitrary finite ring. Throughout this paper, rings have a multiplicative identity, and are assumed to be commutative--though, certainly, Kloosterman sums can also be defined over non-commutative rings such as matrix rings over finite commutative rings. 

Let $R$ be a finite ring.  The unit group of $R$, that is to say, the multiplicative group of invertible elements of $R$, is denoted by $R^\times$. A \emph{Kloosterman sum} over $R$ is a character sum of the form
\[K(\phi,\psi)=\sum_{u\in R^\times} \phi(u)\:\psi(u^{-1})\]
where $\phi$ and $\psi$ are additive characters of $R$.

If $R=\Z/q\Z$ then every additive character is of the form $r\mapsto e_q(mr)$, and one recovers the classical Kloosterman sum in this case. Kloosterman sums have been considered over many other finite rings: finite fields, quotient rings of rings of integers \cite{P98, P00, E01}, Galois rings \cite{SKH}. The starting aim of this paper is that of establishing some fundamental properties for Kloosterman sums over finite rings. Actually, a certain class of finite rings emerges naturally as the broadest in which a meaningful theory of classical character sums can be carried out--the class of finite Frobenius rings. For Gauss and Jacobi sums, the general viewpoint of Frobenius rings has already been explored \cite{L, W, Sz}. (It may be interesting to note, however, that neither of these references uses the Frobenius terminology.) A similar investigation of Kloosterman sums is entirely new, to the best of our knowledge, and the present paper is a first step in this direction. Part of the motivation in seeking generality is that the usefulness of Kloosterman sums, like that of many other character sums, has long transcended number theory and the finite rings that appear naturally therein. See, for instance, \cite{H} for applications of Kloosterman sums to coding theory. Incidentally, finite Frobenius rings are also the natural context for algebraic coding theory \cite{Woo, Woo2, D, SAS}.

Basic facts on Kloosterman sums over finite rings are presented in Section~\ref{sec: kloos}. Preliminaries on additive and multiplicative characters, highlighting the concept of primitive characters (Sections~\ref{sec: addchar} and ~\ref{sec: multichar}) prepare the ground for Section~\ref{sec: Frob}, where we discuss finite Frobenius rings in some detail.

Our main results start in Section~\ref{sec: SK}. In Theorem~\ref{thm: gSK} we extend the Selberg-Kuznetsov identity, satisfied by the classical Kloosterman sums over the modular ring $\Z/q\Z$, to Kloosterman sums over finite Frobenius rings. We point out that a generalization of the Selberg-Kuznetsov identity to quotients of algebraic rings of integers is given in \cite{P98}. Our ring-theoretic context is far broader. 

Starting with Section~\ref{sec: twistedK}, we somewhat change our perspective. The essential Kloosterman sums attached to a finite Frobenius ring $R$ come as a parameterized family $\{K(a): a\in R\}$, where
\[K(a)=\sum_{u\in R^\times} \psi(u+au^{-1}).\] 
The parameterization is relative to a choice of a primitive additive character $\psi$ of $R$. This parameterized viewpoint is generalized by the \emph{twisted Kloosterman sums}, a family $\{K_\tau(a): a\in R\}$ of sums which involve, in addition, a fixed multiplicative character $\tau$ of $R$: 
\[K_\tau(a)=\sum_{u\in R^\times} \tau(u)\:\psi(u+au^{-1}).\]
Note that the usual Kloosterman family corresponds to the trivial twist. Such twisted Kloosterman sums were first considered, over a finite field $\Z/p\Z$, by Davenport \cite{Dav}. Kloosterman himself studied them, in some later work \cite{K46}, over the modular ring $\Z/q\Z$ with $q$ a power of a prime. Twisted Kloosterman sums over modular rings $\Z/q\Z$, where $q$ is a power of an odd prime, in which the twist $\tau$ is the quadratic character, are known as \emph{Sali\'e sums}. They first appeared in an early, important paper of Sali\'e \cite{S}. Twisted Kloosterman sums over finite Frobenius rings are introduced, together with their basic properties, in Section~\ref{sec: twistedK}.

The study of Kloosterman sums, twisted or not, can be quickly reduced to the case of local Frobenius rings. In this case, one can then distinguish between primitive and non-primitive multiplicative characters. Our most substantial results address twisted Kloosterman sums by a non-primitive twist $\tau$. This case includes, notably, the usual Kloosterman sums, as well as the Sali\'e sums in odd characteristic.

\begin{theorem*}
Assume $R$ is a local Frobenius ring, but not a field. 

For any twist $\tau$, the following hold:
\begin{align*}
\sum_{a\in R} K_\tau(a)=\sum_{a\in R^\times} K_\tau(a)=0,\qquad \sum_{a\in R} \big|K_\tau(a)\big|^2=|R||R^\times|,\\\sum_{a\in R^\times} \big|K_\tau(a)\big|^2=\begin{cases} |R||R^\times|& \textrm{ if } \tau  \textrm{ is non-primitive,}\\
2|R||R^\times|-|R|^2 &  \textrm{ if } \tau  \textrm{ is primitive.}
\end{cases}
\end{align*}

For any non-primitive twist $\tau$, the following hold.
\begin{itemize}
 \item[\textsc{(i)}] We have $K_\tau(a)=0$ whenever $a\notin R^\times$.  If $R$ has odd characteristic then, moreover, $K_\tau(a)=0$ whenever $a\notin (R^\times)^2$. 
 \item[\textsc{(ii)}] If $R$ has odd characteristic, then
 \begin{align*}
 \sum_{a\in R^\times} K_\tau(a)^3=0, \qquad \sum_{a\in R^\times} \big|K_\tau(a)\big|^4=3|R^\times||R|^2.
 \end{align*}
 \end{itemize}
\end{theorem*}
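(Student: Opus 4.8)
The proof rests on the local structure of $R$: write $\mathfrak m$ for the maximal ideal, $k=R/\mathfrak m$ for the residue field, and $S=\mathrm{soc}(R)=\mathrm{ann}(\mathfrak m)$ for the socle, which is one-dimensional over $k$ because $R$ is Frobenius; thus $|R^\times|=|R|-|\mathfrak m|$, $|S|=|k|$, $S^2=0$, and $\mathrm{ann}(S)=\mathfrak m$. The two workhorses are orthogonality for the primitive character $\psi$—namely $\sum_{a\in R}\psi(ax)=|R|$ if $x=0$ and $0$ otherwise, together with the vanishing $\sum_{x\in I}\psi(x)=0$ on every nonzero ideal $I$ (in particular on $S$ and on $\mathfrak m$)—and the characterization of primitivity of the twist: since $1+S$ is a subgroup of $R^\times$ isomorphic to $(S,+)$, the twist $\tau$ is primitive exactly when $\tau|_{1+S}$ is nontrivial.

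For the first two moments I would expand $K_\tau(a)$, respectively $|K_\tau(a)|^2=K_\tau(a)\overline{K_\tau(a)}$, and sum over $a\in R$ first. Orthogonality collapses $\sum_{a\in R}K_\tau(a)$ through a factor $\sum_{a\in R}\psi(au^{-1})=0$, and collapses $\sum_{a\in R}|K_\tau(a)|^2$ to the diagonal $u=v$, giving $|R||R^\times|$ at once. For the sums over $R^\times$ I would write $\sum_{a\in R^\times}=\sum_{a\in R}-\sum_{a\in\mathfrak m}$ and evaluate the defect $\sum_{a\in\mathfrak m}$ by the same expansion: now the inner sum $\sum_{a\in\mathfrak m}\psi\big(a(u^{-1}-v^{-1})\big)$ equals $|\mathfrak m|$ precisely when $u\equiv v\pmod S$ and $0$ otherwise. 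Writing $v=u+s$ with $s\in S$ and collecting the character sums over $S$ reduces everything to the single quantity $G=\sum_{s\in S\setminus\{0\}}\tau(1+s)$, which equals $|S|-1$ when $\tau|_{1+S}$ is trivial and $-1$ when it is nontrivial; substituting the two values of $G$ yields the stated dichotomy $|R||R^\times|$ versus $2|R||R^\times|-|R|^2$.

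For (i) the decisive device is the substitution $u\mapsto u(1+s)$ with $s\in S$. Since $\tau$ is non-primitive, $\tau(1+s)=1$, so this substitution fixes $K_\tau(a)$; expanding $(1+s)^{-1}=1-s$ and using $\mathfrak m S=0$ rewrites the phase as $\psi\big(u+au^{-1}+s(u-au^{-1})\big)$. Averaging the resulting identity over all $s\in S$ and invoking primitivity of $\psi$ forces the support condition $(u-au^{-1})\in\mathrm{ann}(S)=\mathfrak m$, that is $u^2\equiv a\pmod{\mathfrak m}$. Hence $K_\tau(a)=0$ unless $\bar a\in k$ is a nonzero square: for $a\in\mathfrak m$ this is impossible, giving the first assertion in every characteristic, and in odd characteristic $1+\mathfrak m\subseteq(R^\times)^2$, so $a\notin(R^\times)^2$ forces $\bar a\notin(k^\times)^2$ and again $K_\tau(a)=0$.

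For (ii) I would first use (i) to replace $\sum_{a\in R^\times}$ by $\sum_{a\in R}$, then expand $K_\tau(a)^3$ and $|K_\tau(a)|^4$ and sum over $a\in R$; orthogonality turns these into weighted counts over the solution sets $u_1^{-1}+u_2^{-1}+u_3^{-1}=0$ and $u_1^{-1}+u_2^{-1}=v_1^{-1}+v_2^{-1}$. A useful organizing identity is the reality relation $K_\tau(a)=\tau(-a)\,\overline{K_\tau(a)}$, hence $K_\tau(a)^2=\tau(-a)\,|K_\tau(a)|^2$, obtained by combining the substitutions $u\mapsto au^{-1}$ and $u\mapsto -u$, which give $K_\tau(a)=\tau(a)K_{\bar\tau}(a)$ and $\overline{K_\tau(a)}=\tau(-1)K_{\bar\tau}(a)$. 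The hard part, and the main obstacle, is the analysis of these solution sets: one must stratify them, collapse the internal $1+S$-freedom using $\tau|_{1+S}=1$ exactly as in (i), and evaluate the residual socle and Gauss-type sums, with odd characteristic entering through the square structure and the invertibility of $2$. I expect the third moment to vanish from a sign obstruction under $u_i\mapsto -u_i$ on the constraint variety, while the fourth moment should split into three strata each contributing $|R^\times||R|$ after the outer factor $|R|$, producing $3|R^\times||R|^2$.
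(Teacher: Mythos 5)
Your treatment of the four moment identities in the first part, and of the vanishing statement \textsc{(i)}, is correct, and for the unitary moments it takes a genuinely different route from the paper. The paper obtains $\sum_{a\in R}K_\tau(a)=0$ and $\sum_{a\in R}|K_\tau(a)|^2=|R||R^\times|$ exactly as you do (Proposition~\ref{prop: tK1}), but for the sums over $R^\times$ it first computes the multiplicative Fourier transform $\sum_{a\in R^\times}\chi(a)\,K_\tau(a)=G(\chi)\,G(\chi\tau)$ (Theorem~\ref{thm: GKtwist}) and then applies the Plancherel identity together with the vanishing/magnitude dichotomy for Gauss sums, counting the characters $\chi$ for which both $\chi$ and $\chi\tau$ are primitive (Corollaries~\ref{cor: 1sttwist} and~\ref{cor: 2ndtwist}). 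Your alternative--evaluating the defect $\sum_{a\in M}$ directly, which collapses onto the pairs $v\in u(1+M^\perp)$ and reduces to the socle sum $\sum_{s\in M^\perp\setminus\{0\}}\tau(1+s)\in\{|M^\perp|-1,\,-1\}$--does check out: using $|M^\perp||M|=|R|$, the defect equals $0$ for non-primitive $\tau$ and $|M||R|$ for primitive $\tau$, which yields the stated dichotomy. Your route is more elementary and self-contained; what the paper's route buys is the Gauss--Jacobi machinery that it reuses for the fourth moment. Your proof of \textsc{(i)} is, modulo packaging (one global average over $1+M^\perp$ rather than a coset-by-coset sumlet argument), the paper's proof of Theorem~\ref{thm: tK2}: both hinge on $(1+s)^{-1}=1-s$ for $s\in M^\perp$, on Lemma~\ref{lem: primsum}, and on $(M^\perp)^\perp=M$.

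Part \textsc{(ii)}, however, is a genuine gap, not a proof. What you offer is a plan whose decisive steps (``stratify the solution sets'', ``evaluate the residual socle and Gauss-type sums'') are precisely the parts you defer, and the plan itself is the solution-counting method that the paper explicitly singles out as apparently not workable in this generality. Two concrete failure points. For the third moment, a ``sign obstruction under $u_i\mapsto -u_i$'' cannot suffice: that substitution merely relates the $\tau$-sum to the conjugate of the corresponding $\overline{\tau}$-sum, and over a finite field the third moment equals $\sigma(-3)|R|^2+2|R|+1\neq 0$, so any valid argument must exploit the non-field hypothesis in an essential way, not through symmetry alone. The paper's proof (Theorem~\ref{thm: 3mom}) parameterizes $\{u^{-1}+v^{-1}+w^{-1}=0\}$ as $\{(bu,bv,-buv):u+v=1\}$, reduces the inner sum to Gauss sums $G(\tau^3,(1-c).\psi)$ which vanish unless $c\in 1+M$, and then counts solutions of $u+v=1$, $uv=c$ by the quadratic character (Lemma~\ref{lem: solutions}); note that it even needs the extra hypothesis $3\in R^\times$, a subtlety your plan does not surface. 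For the fourth moment, ``three strata each contributing $|R^\times||R|$'' is unsubstantiated; the field-case value $2|R^\times||R|^2-|R|^2-3|R|-1$ shows that the naive diagonal count produces the leading constant $2$, not $3$, so the third unit of mass is a ring-theoretic phenomenon with no field analogue. The paper gets it by an entirely different, Fourier-theoretic route: Plancherel applied to $a\mapsto |K_\tau(a)|^2$, with the weighted second moments computed via Jacobi sums for primitive weights (Theorem~\ref{thm: liutwist}, combined with Lemma~\ref{lem: jacobi} to get $|J(\chi\tau,\chi\overline{\tau})|=|R|^{1/2}$) and via the averaging Lemma~\ref{lem: cosetsum} for non-primitive weights (Theorem~\ref{thm: liutwist2}); the contributions $|R|^2|R^\times|^2$ and $2|R|^2|R^\times|^2$ then assemble into $3|R|^2|R^\times|$ in Theorem~\ref{thm:4mom}. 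To rescue your approach you would have to carry out the stratification and the deferred character-sum evaluations in full, and there is real reason to doubt that this closes.
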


The first part of the theorem collects moment computations that are valid for arbitrary twists. These are established in Proposition~\ref{prop: tK1}, Corollary~\ref{cor: 1sttwist}, and Corollary~\ref{cor: 2ndtwist}.

The second part deals with non-primitive twists. Item \textsc{(i)} is shown in Theorem~\ref{thm: tK2}. Item \textsc{(ii)} is covered in Theorem~\ref{thm: 3mom}, whose hypothesis is actually a bit stronger, respectively Theorem~\ref{thm:4mom}.

As we have already mentioned, the first aim of the paper is that of studying Kloosterman sums over finite rings. This includes a number of basic properties, but also the identification of the natural ring-theoretic habitat for these sums--namely, finite Frobenius rings. The second aim of the paper is that of explicitly computing the first few moments of the twisted Kloosterman sums over finite Frobenius rings. Perhaps the most interesting such computation is that of the fourth moment:
\[\sum_{a\in R^\times} \big|K_\tau(a)\big|^4=3|R^\times||R|^2\]
whenever $R$ is a local Frobenius ring of odd characteristic, but not a field, and the twist $\tau$ is non-primitive. For one thing, it is interesting because of the technical aspect. We achieve it by adopting what is essentially a Fourier-theoretic perspective. We compute certain multiplicative Fourier transforms, herein framed as weighted moments of twisted Kloosterman sums. These weighted moments link twisted Kloosterman sums with Gauss and Jacobi sums.

Another reason for the interest is historical. Originally, the fourth moment was used to derive non-trivial upper bounds on Kloosterman sums. The trivial bound is
\[\max_{a\in R^\times} |K(a)|\ll|R|,\] 
and by a non-trivial bound we mean a power-saving bound of the form $|R|^\e$, where $\e<1$. (Throughout this paragraph, the implied constants are absolute.) Kloosterman's idea \cite{K26} was to estimate the fourth moment in order to derive the non-trivial bound
 \[\max_{a\in R^\times} |K(a)|\ll|R|^{3/4}\] 
in the prime field case $R=\Z/p\Z$.  Weil \cite{Weil}, see also \cite[Ch.11]{IK}, famously improved the bound to $|R|^{1/2}$ in the case when $R$ is a field. It is not hard to see, for instance from a second moment computation, that the square root bound is best possible. Sali\'e \cite{S} (see also Estermann \cite{E}) has generalized and improved Kloosterman's bound in the case $R=\Z/q\Z$, where $q$ is a proper power of a prime. More recently, Knightly and Li \cite[Thm.9.3] {KL} have proved similar bounds for twisted Kloosterman sums in the same case, $R=\Z/q\Z$ with $q$ a proper power of a prime.

The fourth Kloosterman moment over a finite field of odd characteristic can be computed explicitly, and is given by 
\[\sum_{a\in R^\times} K(a)^4=2|R^\times||R|^2-|R|^2-3|R|-1.\]
This is due to Sali\'e \cite{S} in the prime field case $R=\Z/p\Z$. His method works in general, however. See also \cite[Sec.4.4]{I}, though note that formula (4.25) therein needs a small correction. The fourth moment computation in the field case is done by solution-counting, a method which does not seem to be applicable in the present ring-theoretic generality even if we merely want the fourth moment of the usual Kloosterman sums.

The fourth moment of twisted Kloosterman sums over $R=\Z/q\Z$, where $q$ is a proper power of a prime, was investigated by W. Zhang in several papers \cite{Z04, Z, Z16, ZS}. Zhang and Shen \cite{ZS} deal with the case of a non-primitive twist, and our computation is a broad generalization of their results. In \cite{Z16}, Zhang treats the case of a primitive twist. We hope to address this case, in the generality of local Frobenius rings, in a future work. 

\begin{ack}
I would like to thank Youri Tamitegama for interesting discussions around Kloosterman sums. 
\end{ack}

\section{Preliminaries}\label{sec: alg}

\subsection{Rings} Let $R$ be a finite ring. Recall that all rings considered in this paper are commutative, and have an multiplicative identity.

If $I$ is an ideal of $R$, then we write $[\:\cdot\:]: R\to R/I$ for the quotient ring homomorphism. In order to avoid the degenerate zero ring, we usually quotient by $I\neq R$. 

If $I$ is an ideal of $R$, then the \emph{annihilator} of $I$ is the ideal
\begin{align*}
I^\perp=\{r\in R: ra=0 \textrm{ for all } a\in I\}.
\end{align*}

A finite ring decomposes into a product of finite local rings. In contexts with good product behaviour, we can then focus on finite local rings--a very helpful simplification.

Let $R$ be a finite local ring, with maximal ideal $M$. Then the unit group $R^\times$ can be described as the complement of $M$ in $R$. The quotient $R/M$ is the \emph{residue field} of $R$. Note that $1+M$ is a subgroup of $R^\times$. More generally, let $I\neq R$ be an ideal of $R$. Then $R/I$ is again a finite local ring, and $u$ is a unit in $R$ if and only if $[u]$ is a unit in $R/I$. In particular, the induced group homomorphism $[\:\cdot\:]: R^\times\to (R/I)^\times$ is surjective, and $1+I$ is a subgroup of $R^\times$.

\subsection{Characters} Let $G$ be a finite abelian group. Recall that a \emph{character} of $G$ is a group homomorphism $G\to \C^*$. The \emph{dual} $\widehat{G}$ is the set of characters of $G$. Then $\widehat{G}$ is an abelian group under pointwise multiplication, the identity element being the trivial character $\ct$. Furthermore, $\widehat{G}$ is (non-canonically) isomorphic to $G$. In particular the dual group has the same size as the original group: $|\widehat{G}|=|G|$.

Character sum computations are carefully orchestrated applications of the basic orthogonality relations: 
\[\sum_{g\in G} \kappa(g)=|G|\:[\kappa=\ct], \qquad \sum_{\kappa\in \widehat{G}} \kappa(g)=|G|\:[g=0]\]
for each character $\kappa\in \widehat{G}$, respectively for each element $g\in G$. Here, and several times throughout this paper, we find it convenient to use the Iverson bracket notation. This is a propositional variation on the Kronecker delta notation, defined as follows: if $\mathcal{P}$ is a proposition, then
\begin{align*}
[\mathcal{P}]=\begin{cases} 
1 & \textrm{if } \mathcal{P} \textrm{ is true,}\\
0 & \textrm{if } \mathcal{P} \textrm{ is false.}
\end{cases}
\end{align*}

\section{Additive characters}\label{sec: addchar}
By a \emph{character} of a finite ring $R$ we always mean a character of the additive group of $R$. Additive characters take, naturally, precedence herein, as they underlie the Kloosterman sums. But in the tapestry of character sums, multiplicative characters are inevitably interwoven as well. They will appear later on, starting with Section~\ref{sec: multichar}, and they will be explicitly labeled as such.

\subsection{Primitive characters}
Let $\psi$ be a character of $R$. Note that any additive subgroup of $R$ contains a largest ideal. Applied to the kernel of $\psi$, this fact says that there is a largest ideal $I$ so that $\psi=\ct$ on $I$; the ideal $I$ is called the \emph{conductor} of $\psi$. The trivial character $\ct$ has conductor $R$. At the other end of the conductor spectrum, we have the most non-trivial characters.

\begin{defn}
A character $\psi$ is said to be \emph{primitive} if its conductor is the zero ideal.
\end{defn}

Here are two simple, but fundamental examples. Consider $R=\Z/q\Z$. Then the $q$-scaled exponential $e_q(r)=\exp(2\pi i r/q)$ is a primitive character, since the kernel of $e_q$ is $(0)$. Consider now $R=\F_q$, a finite field with $q$ elements. Let $\psi$ be a non-trivial character. Then $\psi$ is automatically primitive, as $(0)$ is the only ideal other than $R$. Note, however, that the kernel of $\psi$ is not reduced to $(0)$ as soon as $q$ is a proper prime power. 

The characters of $R$ which are trivial on an ideal $I$ are in a natural correspondence to the characters of the quotient ring $R/I$.  Characters of $R$ with conductor $I$ correspond to primitive characters of $R/I$.

\subsection{Scaling} Let $\psi$ be a character of $R$, and let $a\in R$. Then the scaling $a.\psi$, given by $r\mapsto \psi(ar)$, is again a character of $R$. 

\begin{lem}\label{lem: scale}
Let $\psi$ be a primitive character. Then the following hold:
\begin{itemize}
\item[(i)] each character of $R$ is of the form $a.\psi$ for a unique $a\in R$;
 \item[(ii)] $a.\psi$ is primitive if and only if $a\in R^\times$. 
 \end{itemize}
\end{lem}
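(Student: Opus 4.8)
The plan is to study the single map $\Phi\colon R\to\widehat R$, $a\mapsto a.\psi$, and to read off both statements from its properties. First I would check that $\Phi$ is a homomorphism of abelian groups: for $r\in R$ one has $\big((a+b).\psi\big)(r)=\psi(ar+br)=(a.\psi)(r)\,(b.\psi)(r)$, so $(a+b).\psi=(a.\psi)\,(b.\psi)$ in $\widehat R$. Since $|\widehat R|=|R|$, proving (i) reduces to proving that $\Phi$ is injective. Suppose $a.\psi=b.\psi$; then $(a-b).\psi=\ct$, which says precisely that $\psi$ is trivial on the principal ideal $(a-b)R$. Because $\psi$ is primitive, its conductor is $(0)$, so every ideal on which $\psi$ is trivial must equal $(0)$; hence $(a-b)R=(0)$, and evaluating at $1\in R$ forces $a=b$. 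This gives (i).

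For (ii) I would compute the conductor of $a.\psi$ directly. For an ideal $I$, the character $a.\psi$ is trivial on $I$ if and only if $\psi(ar)=1$ for all $r\in I$, that is, if and only if $\psi$ is trivial on the ideal $aI=\{ar:r\in I\}$. By primitivity of $\psi$ this happens exactly when $aI=(0)$, equivalently $I\subseteq\{r\in R:ar=0\}$. Since $\{r\in R:ar=0\}$ is itself an ideal (the annihilator of $a$), it is the largest ideal on which $a.\psi$ is trivial; thus the conductor of $a.\psi$ is exactly the annihilator of $a$. In particular $a.\psi$ is primitive if and only if $a$ has trivial annihilator, i.e. if and only if $a$ is not a zero-divisor.

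It then remains to identify the non-zero-divisors of $R$ with its units. If $a\in R^\times$ and $ar=0$, then $r=a^{-1}(ar)=0$, so a unit has trivial annihilator. Conversely, if $a$ has trivial annihilator then multiplication by $a$ is an injective map $R\to R$; since $R$ is finite, this map is also surjective, so $ax=1$ for some $x\in R$ and $a\in R^\times$. Combining this with the conductor computation yields (ii). The lemma is not difficult, but the one step deserving care is the conductor computation of the second paragraph, which is the structural heart of the argument and uses primitivity essentially; the genuinely non-formal input is the final step, where passing from ``not a zero-divisor'' to ``unit'' is exactly where the finiteness of $R$ is indispensable.
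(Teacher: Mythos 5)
Your proof is correct and follows essentially the same route as the paper: part (i) via the homomorphism $a\mapsto a.\psi$ together with the counting $|R|=|\widehat R|$, with injectivity deduced from primitivity, and part (ii) via identifying the conductor of $a.\psi$ with the annihilator of $a$ and then using finiteness of $R$ to equate non-zero-divisors with units. The only difference is that you spell out the details (the injectivity argument and the conductor computation) that the paper asserts more tersely.
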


\begin{proof} (i) A character $\psi$ induces a homomorphism of abelian groups $h_\psi: R\to \widehat{R}$, given by $a\mapsto a.\psi$. The injectivity of $h_\psi$ is equivalent to $\psi$ being primitive--our very hypothesis. As $|R|=|\widehat{R}|$, it follows that $h_\psi$ is a bijection--the desired claim.

(ii) The conductor of $a.\psi$ is the annihilator ideal $(a)^\perp=\{r\in R: ra=0\}$. Thus $a.\psi$ is primitive if and only if $(a)^\perp=(0)$, that is to say, $a$ is not a zero-divisor. As $R$ is finite, this is equivalent to $a$ being a unit.
\end{proof}

The proof of part (i) has a bit more to say. On the one hand, the \emph{generating} property for $\psi$ captured by (i) is in fact equivalent to $\psi$ being primitive. On the other hand, note that the additive dual $\widehat{R}$ is an $R$-module under the scaling action, and the map $h_\psi$ induced by a character $\psi$ is an $R$-module homomorphism. So, a primitive character $\psi$ induces an $R$-module isomorphism between $R$ and $\widehat{R}$. We will return to this idea in Section~\ref{sec: Frob}.

In the context of character sums, the following summation formula will prove useful.

\begin{lem}\label{lem: primsum}
 Let $\psi$ be a primitive character. If $I$ is an ideal of $R$, then
\begin{align*}
\sum_{a\in I} \psi(ca)=|I| \: [c\in I^\perp].
\end{align*}
\end{lem}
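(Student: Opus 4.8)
The plan is to view the summand as a character of the additive group $I$ and then apply orthogonality on that group. First I would observe that the map $\chi\colon a\mapsto \psi(ca)$, restricted to $I$, is a character of the finite abelian group $(I,+)$: it is multiplicative because $\psi$ is additive, namely $\chi(a+b)=\psi(ca+cb)=\psi(ca)\psi(cb)=\chi(a)\chi(b)$, and it sends $0$ to $1$. Applying the orthogonality relation on $I$ to $\chi$ then gives
\[\sum_{a\in I}\psi(ca)=\sum_{a\in I}\chi(a)=|I|\:[\chi=\ct],\]
so the entire problem reduces to deciding precisely when $\chi$ is the trivial character of $I$, that is, when $\psi(ca)=1$ for all $a\in I$.

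It then remains to identify this triviality condition with the membership $c\in I^\perp$. One direction is immediate: if $c\in I^\perp$, then $ca=0$ for every $a\in I$, so $\psi(ca)=\psi(0)=1$ and $\chi=\ct$. For the converse — which is where primitivity must be used — I would consider the set $cI=\{ca:a\in I\}$. Since $I$ is an ideal, $cI$ is again an ideal of $R$ (it is closed under addition and absorbs multiplication by $R$, as $r\cdot ca=c(ra)\in cI$). Now if $\chi=\ct$, then $\psi$ is trivial on the ideal $cI$. But the conductor of $\psi$ is the \emph{largest} ideal on which $\psi$ is trivial, and primitivity of $\psi$ means this conductor is the zero ideal. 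Hence $cI\subseteq (0)$, which forces $ca=0$ for all $a\in I$, i.e. $c\in I^\perp$.

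The main obstacle is precisely this converse direction, and more specifically the small structural observation that $cI$ is an ideal, since it is that fact which allows the primitivity of $\psi$ to be brought to bear through the conductor. Everything else is a routine application of additive orthogonality. Combining the two directions yields the equality of Iverson brackets $[\chi=\ct]=[c\in I^\perp]$, and substituting this into the displayed orthogonality computation gives the stated formula.
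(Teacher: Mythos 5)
Your proposal is correct and follows essentially the same route as the paper's proof: both view $a\mapsto\psi(ca)$ as the scaled character $c.\psi$ restricted to the additive group $I$, apply orthogonality to get $|I|\:[c.\psi=\ct \text{ on } I]$, and then use primitivity of $\psi$ to identify triviality on $I$ with $cI=(0)$, i.e.\ $c\in I^\perp$. The only difference is that you spell out the small step the paper leaves implicit--that $cI$ is an ideal, so that the conductor (rather than merely the kernel) of $\psi$ can be invoked--which is a worthwhile clarification but not a different argument.
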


\begin{proof}
Indeed, $\sum_{a\in I} \psi(ca)= \sum_{a\in I} (c.\psi)(a)$ equals $|I|$ or $0$ according to whether the scaled character $c.\psi$ is, or is not, trivial on the ideal $I$. As $\psi$ is primitive, $c.\psi=\ct$ on $I$ if and only if $cI=(0)$, that is $c\in I^\perp$.
\end{proof}

\subsection{Products}
Assume $R$ is a product ring, $R=R_1\times\dots \times R_n$. If $\psi_k$ is a character on $R_k$, for each $k$, then $\psi_1\times\dots \times \psi_n$, given by $(r_1, \dots, r_n)\mapsto \psi_1(r_1)\dots \psi_n(r_n)$, is a character on $R$. Furthermore, each character $\psi$ on $R$ is of the above form, in a unique way. So there is an isomorphism $\widehat{R_1}\times\dots \times \widehat{R_n}\simeq \widehat{R}$, given by $(\psi_1,\dots,\psi_n)\mapsto \psi_1\times\dots \times \psi_n$, which matches them as abelian groups, and even as $R$-modules.

The following fact is expected, and easy to prove.

\begin{lem}\label{lem: condprod}
Consider the character $\psi_1\times\dots \times \psi_n$ on $R$, where each $\psi_k$ is a character on $R_k$. Then the conductor of $\psi_1\times\dots \times \psi_n$ is $I_1\times\dots\times I_n$, where $I_k$ is the conductor of $\psi_k$. In particular, $\psi_1\times\dots \times \psi_n$ is primitive if and only if each $\psi_k$ is primitive.
\end{lem}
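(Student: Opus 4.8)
The plan is to establish the ideal equality $C = I_1\times\dots\times I_n$, where $C$ denotes the conductor of $\psi=\psi_1\times\dots\times\psi_n$, by proving the two inclusions separately, and then to read off the primitivity statement as the special case in which all the conductors vanish. Throughout I use only the definition of the conductor as the largest ideal on which the character is trivial, together with the formula for the product character.

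For the inclusion $I_1\times\dots\times I_n\subseteq C$, I would simply evaluate $\psi$ on the product ideal. For an element $(a_1,\dots,a_n)$ with each $a_k\in I_k$, the definition of the product character gives $\psi(a_1,\dots,a_n)=\prod_k \psi_k(a_k)=1$, since each $\psi_k$ is trivial on its own conductor $I_k$. Thus $\psi$ is trivial on the ideal $I_1\times\dots\times I_n$, and because $C$ is the largest ideal with this property, the inclusion follows.

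For the reverse inclusion $C\subseteq I_1\times\dots\times I_n$, the key device is the family of idempotents $e_k=(0,\dots,1,\dots,0)$, with $1$ in the $k$-th slot, together with the projections $\pi_k:R\to R_k$. Given $c\in C$, the element $e_k c=(0,\dots,\pi_k(c),\dots,0)$ again lies in $C$, since $C$ is an ideal; evaluating $\psi$ on it isolates the $k$-th factor, $\psi(e_k c)=\psi_k(\pi_k(c))$, which equals $1$ because $\psi$ is trivial on $C$. Letting $c$ range over $C$ shows that $\psi_k$ is trivial on the ideal $\pi_k(C)\subseteq R_k$, and hence, by maximality of the conductor $I_k$, that $\pi_k(C)\subseteq I_k$. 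Consequently every $c\in C$ satisfies $\pi_k(c)\in I_k$ for all $k$, that is $c\in I_1\times\dots\times I_n$, as desired.

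Finally, the primitivity claim is immediate: $\psi$ is primitive precisely when $C=(0)$, which by the computed formula means $I_1\times\dots\times I_n=(0)\times\dots\times(0)$, equivalently each $I_k=(0)$, i.e. each $\psi_k$ is primitive. I do not anticipate a genuine obstacle here; the only point requiring care is the reverse inclusion, where one must extract the component conductors from $C$ without circularity. Multiplying by the idempotents $e_k$ does exactly this, and it also sidesteps any need to separately invoke the structure theorem that every ideal of a product ring is a product of ideals.
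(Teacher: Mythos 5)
Your proof is correct. Note that the paper itself offers no argument for this lemma---it is stated with the remark that it is ``expected, and easy to prove''---so there is no authorial proof to compare against; your write-up simply supplies the omitted details. Both inclusions are sound: the containment $I_1\times\dots\times I_n\subseteq C$ follows from the product formula and the maximality defining $C$, and the reverse containment via multiplication by the idempotents $e_k$ is clean, since $e_kc\in C$ isolates $\psi_k(\pi_k(c))=1$ and $\pi_k(C)$ is an ideal of $R_k$ (being the image of an ideal under the surjective projection), so maximality of $I_k$ applies. The idempotent device is a nice touch: as you observe, it avoids having to invoke (or reprove) the fact that every ideal of a product ring decomposes as a product of ideals, which would be the other natural route to the reverse inclusion. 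The deduction of the primitivity statement from the conductor formula is immediate, as you say.
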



\section{Kloosterman sums over finite rings}\label{sec: kloos}
We now turn to Kloosterman sums over finite rings, and establish a few general properties. Let us begin by recalling the following.

\begin{defn}
Let $R$ be a finite ring. A \emph{Kloosterman sum} over $R$ is a character sum of the form
\[K(\phi,\psi)=\sum_{u\in R^\times} \phi(u)\:\psi(u^{-1})\]
where $\phi$ and $\psi$ are characters of $R$. 
\end{defn}

The two facts below are straightforward, and the proofs are left to the reader.

\begin{prop}
Let $\phi$ and $\psi$ be characters of $R$. Then the following hold:
\begin{itemize}
\item[(i)] (Reality) $K(\phi,\psi)$ is a real number.
\item[(ii)] (Symmetry) $K(\phi,\psi)=K(\psi,\phi)$.
\item[(iii)] (Unit-shifting) If $a\in R^\times$, then $K(\phi,a.\psi)=K(a.\phi,\psi)$.
\end{itemize}
\end{prop}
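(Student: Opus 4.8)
The plan is to establish all three identities by a single elementary device: a change of summation variable in the sum over $R^\times$, exploiting the fact that negation, inversion, and multiplication by a fixed unit are each bijections of the unit group. Throughout, I would use that an additive character is valued in the unit circle, so that $\overline{\phi(u)}=\phi(u)^{-1}=\phi(-u)$ for every $u$, and likewise for $\psi$.

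I would begin with the symmetry (ii), which is the cleanest. Substituting $v=u^{-1}$—a bijection of $R^\times$ since inversion is an involution of the unit group—reindexes the sum as $K(\phi,\psi)=\sum_{v\in R^\times}\phi(v^{-1})\psi(v)=K(\psi,\phi)$, reading off the definition of $K(\psi,\phi)$ directly. For the reality (i), I would take complex conjugates inside the sum and use $\overline{\phi(u)}=\phi(-u)$ together with $\overline{\psi(u^{-1})}=\psi(-u^{-1})$ to obtain $\overline{K(\phi,\psi)}=\sum_{u\in R^\times}\phi(-u)\psi(-u^{-1})$. Substituting $v=-u$, which is a bijection of $R^\times$ because $-1\in R^\times$, and noting $v^{-1}=-u^{-1}$, the conjugate sum collapses back to $\sum_{v\in R^\times}\phi(v)\psi(v^{-1})=K(\phi,\psi)$, so $K(\phi,\psi)\in\R$.

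For the unit-shifting (iii), I would first expand $K(\phi,a.\psi)=\sum_{u\in R^\times}\phi(u)\psi(au^{-1})$ using the definition of the scaled character. Substituting $w=a^{-1}u$, which is a bijection of $R^\times$ since $a\in R^\times$, and writing $u=aw$, gives $\phi(u)=\phi(aw)=(a.\phi)(w)$ and $\psi(au^{-1})=\psi(w^{-1})$, so the sum rearranges to $\sum_{w\in R^\times}(a.\phi)(w)\psi(w^{-1})=K(a.\phi,\psi)$, as desired.

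I do not expect any genuine obstacle. The only points requiring care are verifying that each reindexing map is truly a bijection of $R^\times$—inversion, negation (legitimate because $-1$ is a unit), and multiplication by the fixed unit $a^{-1}$—and applying the identity $\phi(u)^{-1}=\phi(-u)$ for additive characters correctly when conjugating. These are exactly the places where the argument could slip, so I would state each bijection explicitly before reindexing.
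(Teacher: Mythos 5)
Your proof is correct: all three change-of-variable arguments (inversion for symmetry, negation for reality, multiplication by $a^{-1}$ for unit-shifting) are valid, and you rightly note that $-1$ is always a unit so negation is a legitimate reindexing. The paper declares these facts straightforward and leaves them to the reader, and your substitutions are precisely the intended elementary argument.
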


\begin{prop}[Factorization]\label{lem: fac} 
Assume $R$ is a product ring, $R=R_1\times\dots \times R_n$, and let $\phi=\phi_1\times\dots \times \phi_n$ and $\psi=\psi_1\times\dots \times \psi_n$ be characters of $R$. Then
\begin{align*}
K(\phi,\psi)=K(\phi_1,\psi_1)\dots K(\phi_n,\psi_n)
\end{align*}
where each factor $K(\phi_k,\psi_k)$ is a Kloosterman sum over $R_k$.
\end{prop}

The conceptual upshot of the factorization property is the following: since a finite ring decomposes into a product of finite local rings, and Kloosterman sums factor correspondingly, we can focus on Kloosterman sums for finite local rings. 

Firstly, let us record the values of those Kloosterman sums over local rings which involve the trivial character. Such sums are also known as Ramanujan sums.

\begin{prop}\label{prop: rama}
Assume $R$ is a local ring, with maximal ideal $M$. Let $\psi$ be a character of $R$. Then:
\begin{align*}
K(\psi, \ct)=\begin{cases} |R|\: [\psi=\ct]-|M| & \textrm{ if } \psi \textrm{ is trivial on } M,\\ 0 & \textrm{ otherwise}. \end{cases}
\end{align*}
In particular, if $\psi$ is primitive then 
\begin{align*}
K(\psi, \ct)=-\:[ R \textrm{ is a field}].
\end{align*}
\end{prop}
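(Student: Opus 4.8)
The plan is to observe first that the second argument is the trivial character, so $\psi(u)\,\ct(u^{-1})=\psi(u)$ and the Kloosterman sum collapses to a plain character sum over the unit group:
\[
K(\psi,\ct)=\sum_{u\in R^\times}\psi(u).
\]
Since $R$ is local, the unit group is exactly the complement of the maximal ideal, $R^\times=R\setminus M$. I would therefore rewrite the sum over units as the difference of a sum over all of $R$ and a sum over $M$:
\[
\sum_{u\in R^\times}\psi(u)=\sum_{r\in R}\psi(r)-\sum_{m\in M}\psi(m).
\]

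Both pieces are then handled by the orthogonality relations recorded in Section~\ref{sec: alg}. The first is a full sum over the additive group of $R$, equal to $|R|\,[\psi=\ct]$. For the second, the key observation is that $M$ is an additive subgroup of $R$, so the restriction $\psi|_M$ is a character of $M$, and orthogonality on $M$ gives $\sum_{m\in M}\psi(m)=|M|$ if $\psi$ is trivial on $M$ and $0$ otherwise. Combining the two: if $\psi$ is trivial on $M$ we obtain $|R|\,[\psi=\ct]-|M|$; if $\psi$ is not trivial on $M$, then in particular $\psi\neq\ct$, so the first sum vanishes as well and the total is $0$. This is precisely the displayed case distinction.

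For the ``in particular'' clause, assume $\psi$ is primitive, meaning its conductor is the zero ideal. If $R$ is a field then $M=(0)$, so $\psi$ is vacuously trivial on $M$ and, being primitive hence nontrivial, $[\psi=\ct]=0$; the first case yields $K(\psi,\ct)=-|M|=-1$. If $R$ is not a field then $M\neq(0)$, and $\psi$ cannot be trivial on $M$---otherwise $M$ would be a nonzero ideal contained in the conductor $(0)$, a contradiction---so the second case applies and $K(\psi,\ct)=0$. Both outcomes are summarized by $-[R\text{ is a field}]$.

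I do not anticipate a genuine obstacle here; the only points that require care are recognizing $M$ as an additive subgroup so that orthogonality applies directly to it, and, in the primitive case, the small logical step that triviality on a nonzero $M$ would force the conductor to be nonzero.
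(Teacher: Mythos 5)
Your proposal is correct and follows essentially the same route as the paper: both write $K(\psi,\ct)=\sum_{a\in R}\psi(a)-\sum_{a\in M}\psi(a)$ using $R^\times=R\setminus M$, evaluate each piece by orthogonality, and settle the primitive case by asking whether the conductor $(0)$ coincides with $M$. The only cosmetic difference is that you phrase the case analysis via the Iverson bracket where the paper lists the three cases explicitly.
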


\begin{proof} Write
\[K(\psi, \ct)=\sum_{u\in R^\times} \psi(u)=\sum_{a\in R} \psi(a)-\sum_{a\in M} \psi(a).\]
The right-hand side equals $|R|-|M|=|R^\times|$ if $\psi$ is trivial; $-|M|$ if $\psi$ is non-trivial, but trivial on $M$; $0$ if $\psi$ is non-trivial on $M$. When $\psi$ is primitive, the outcome hinges on whether its conductor, the zero ideal, equals $M$ or not. \end{proof}

\begin{prop}\label{prop: van-red} 
Assume $R$ is a local ring. Let $\phi$ and $\psi$ be non-trivial characters of $R$. 
\begin{itemize}
\item[(i)] (Vanishing) If $\phi$ and $\psi$ have different conductors, then $K(\phi,\psi)=0$.
\item[(ii)] (Reduction) If $\phi$ and $\psi$ have conductor $I$, then 
\[K(\phi,\psi)=|I|\: K(\phi',\psi')\]
where $\phi'$ and $\psi'$ are the primitive characters on $R/I$ inducing $\phi$ and $\psi$, and $K(\phi',\psi')$ is a Kloosterman sum over $R/I$.
\end{itemize}
\end{prop}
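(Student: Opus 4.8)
\emph{The plan.} I would treat both parts with a single computation: partition the unit group $R^\times$ into cosets of the subgroup $1+I$, where $I=I_\phi$ is the conductor of $\phi$, and evaluate the contribution of each coset. Recall that $\phi$ is trivial on its own conductor $I$, and that $I_\psi$, the conductor of $\psi$, is the largest ideal on which $\psi$ vanishes, so that $\psi$ is trivial on $I$ precisely when $I_\phi\subseteq I_\psi$.

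First I would fix a coset $u_0(1+I)$ and compute $\sum_{w\in 1+I}\phi(u_0w)\,\psi\big((u_0w)^{-1}\big)$. Two observations drive the calculation. Since $\phi$ is trivial on $I$ and $I$ is an ideal, $\phi(u_0w)=\phi(u_0)$ for every $w=1+a\in 1+I$, so $\phi$ is constant along the coset. And since $1+I$ is a group, $w\mapsto w^{-1}$ permutes it, so after reindexing and writing $w=1+a$ the coset sum becomes $\phi(u_0)\,\psi(u_0^{-1})\sum_{a\in I}\psi(u_0^{-1}a)$. The inner sum is the sum of the character $u_0^{-1}.\psi$ over the subgroup $I$; because $u_0^{-1}$ is a unit we have $u_0^{-1}I=I$, so this character is trivial on $I$ exactly when $\psi$ is, and the orthogonality relations give $\sum_{a\in I}\psi(u_0^{-1}a)=|I|\,[\psi \textrm{ is trivial on } I]=|I|\,[I_\phi\subseteq I_\psi]$.

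For part (i), the hypothesis $I_\phi\neq I_\psi$ forces at least one of the non-containments $I_\phi\not\subseteq I_\psi$ or $I_\psi\not\subseteq I_\phi$; invoking the symmetry $K(\phi,\psi)=K(\psi,\phi)$ I may assume $I_\phi\not\subseteq I_\psi$. Then every coset sum vanishes, and so does $K(\phi,\psi)$. For part (ii), where $I_\phi=I_\psi=I$, the bracket evaluates to $1$, so each coset contributes $|I|\,\phi(u_0)\,\psi(u_0^{-1})$; summing over a transversal of $1+I$ in $R^\times$ and using the isomorphism $R^\times/(1+I)\simeq (R/I)^\times$ induced by the surjection of unit groups, together with the fact that $\phi,\psi$ descend to the primitive characters $\phi',\psi'$ on $R/I$, the total is $|I|\sum_{v\in(R/I)^\times}\phi'(v)\,\psi'(v^{-1})=|I|\,K(\phi',\psi')$.

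\emph{The main obstacle.} The character-descent and the coset bookkeeping are routine. The one place to be careful is the case analysis in (i): the coset computation, organized around $I_\phi$, only detects non-vanishing through whether $\psi$ is trivial on $I_\phi$, that is, through the single containment $I_\phi\subseteq I_\psi$. It is the symmetry of the Kloosterman sum that lets me cover the complementary containment, and thereby conclude vanishing from the mere inequality of the two conductors rather than from a one-sided comparison.
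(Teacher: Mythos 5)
Your proof is correct, and at its core it uses the same technique as the paper: partition $R^\times$ into cosets of an ideal attached to a conductor, use constancy of one character on each coset, and kill each coset sum by orthogonality of the other character. The differences are in the bookkeeping, and they are worth recording. For part (i), the paper anchors the decomposition at the conductor of $\psi$ --- the character evaluated at $u^{-1}$ --- writing $\psi(u^{-1})=\psi'([u]^{-1})$, so that this factor is automatically constant on each fiber of $R^\times\to (R/I(\psi))^\times$ and the inner sum is the bare additive sum $\sum_{a\in I(\psi)}\phi(\tilde{u}+a)=0$; no inversion ever has to be manipulated inside a sum. You anchor instead at the conductor of $\phi$, which makes $\phi$ trivially constant on cosets of $1+I_\phi$, but leaves the inverted variable inside the inner sum; you then need the two extra (correct) observations that $w\mapsto w^{-1}$ permutes the group $1+I_\phi$ and that $u_0^{-1}I_\phi=I_\phi$, in order to reduce to $\sum_{a\in I_\phi}\psi(u_0^{-1}a)$ and apply orthogonality. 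The two choices are mirror images of one another, and both rely on the symmetry $K(\phi,\psi)=K(\psi,\phi)$ for the reduction to a one-sided non-containment, which you rightly flag as the point where mere inequality of conductors gets converted into a usable hypothesis. A second, organizational difference: you extract both (i) and (ii) from a single coset computation via the bracket $[I_\phi\subseteq I_\psi]$, whereas the paper proves (ii) separately by descending both characters to $R/I$ and counting the fibers of $R^\times\to (R/I)^\times$, each of size $|I|$ --- the same count as in your transversal argument, since those fibers are exactly your cosets $u_0(1+I)$. What your version buys is uniformity; what the paper's version buys is that each of the two parts is an immediate one-line computation once the decomposition is chosen adapted to the character being summed.
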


\begin{proof}
(i) Let $I(\phi)$ and $I(\psi)$ be the conductors of $\phi$, respectively $\psi$. Without loss of generality, we may assume that $I(\psi)\nsubseteq I(\phi)$. Let $\psi'$ be the additive character on $R/I(\psi)$ which induces $\psi$. We then have
\begin{align*}
K(\phi,\psi)=\sum_{u\in R^\times} \phi(u)\:\psi'([u]^{-1})=\sum_{v\in (R/I(\psi))^\times}\psi'(v^{-1}) \sum_{u\in R^\times,\: [u]=v}\phi(u).
\end{align*}
We now show that each inner sum vanishes. Fix $v\in (R/I(\psi))^\times$, and let $\tilde{u}\in R$ be a lift of $v$. Note that $\tilde{u}\in R^\times$. The full lift $\{u\in R: [u]=v\}=\tilde{u}+I(\psi)$ is also a subset of the unit group $R^\times$, and so
\begin{align*}
\sum_{u\in R^\times,\: [u]=v}\phi(u)=\sum_{a\in I(\psi)}\phi(\tilde{u}+a)=\phi(\tilde{u})\sum_{a\in I(\psi)}\phi(a)=0
\end{align*}
since $\phi$ is non-trivial on $I(\psi)$.

(ii) Let $I$ be the conductor of $\phi$ and $\psi$, and let $\phi'$ and $\psi'$ be the primitive characters on $R/I$ which induce $\phi$ and $\psi$. As $u\in R^\times$ if and only if $[u]\in (R/I)^\times$, we have
\begin{align*}
K(\phi,\psi)=\sum_{u\in R^\times} \phi'([u])\:\psi'([u]^{-1})=|I|\sum_{v\in (R/I)^\times} \phi'(v)\:\psi'(v^{-1}) =|I|\: K(\phi',\psi')
\end{align*}
as claimed.
\end{proof}

Local facts can then be de-localized, using the factorization property of Kloosterman sums. We illustrate this idea by giving general versions of the vanishing and the reduction properties. Given a finite ring $R$, let $R=R_1\times\dots \times R_n$ be a decomposition as a product of local rings. A character $\psi$ of $R$ has a corresponding decomposition $\psi=\psi_1\times\dots \times \psi_n$; each $\psi_k$ is referred to as a local factor of $\psi$.

\begin{cor}
Let $R$ be a finite ring. Let $\phi$ and $\psi$ be characters of $R$, all of whose local factors are non-trivial. 
\begin{itemize}
\item[(i)] (Vanishing) If $\phi$ and $\psi$ have different conductors, then $K(\phi,\psi)=0$.
\item[(ii)] (Reduction) If $\phi$ and $\psi$ have conductor $I$, then $K(\phi,\psi)=|I|\: K(\phi',\psi')$, where $\phi'$ and $\psi'$ are the primitive characters on $R/I$ inducing $\phi$ and $\psi$, and $K(\phi',\psi')$ is a Kloosterman sum over $R/I$.
\end{itemize}
\end{cor}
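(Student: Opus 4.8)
The plan is to reduce both statements to their local counterparts (Proposition~\ref{prop: van-red}) via the factorization property (Proposition~\ref{lem: fac}), using Lemma~\ref{lem: condprod} to track conductors across the decomposition $R=R_1\times\dots\times R_n$. Write $\phi=\phi_1\times\dots\times\phi_n$ and $\psi=\psi_1\times\dots\times\psi_n$, and let $I_k(\phi)$, $I_k(\psi)$ denote the conductors of the local factors $\phi_k$, $\psi_k$. By Lemma~\ref{lem: condprod}, the conductor of $\phi$ is $I(\phi)=I_1(\phi)\times\dots\times I_n(\phi)$, and likewise for $\psi$. Each local factor is non-trivial by hypothesis, so Proposition~\ref{prop: van-red} is available componentwise.

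For (i), suppose $I(\phi)\neq I(\psi)$. Comparing the product decompositions of the two conductors, there must be an index $k$ with $I_k(\phi)\neq I_k(\psi)$; that is, the non-trivial local characters $\phi_k$ and $\psi_k$ have different conductors. The local vanishing property gives $K(\phi_k,\psi_k)=0$, and then the factorization $K(\phi,\psi)=K(\phi_1,\psi_1)\dots K(\phi_n,\psi_n)$ forces $K(\phi,\psi)=0$.

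For (ii), suppose $\phi$ and $\psi$ share the conductor $I$. Then $I=I_1\times\dots\times I_n$, where $I_k$ is the common conductor of $\phi_k$ and $\psi_k$. The local reduction property gives $K(\phi_k,\psi_k)=|I_k|\,K(\phi_k',\psi_k')$, with $\phi_k'$, $\psi_k'$ the primitive characters on $R_k/I_k$ induced by $\phi_k$, $\psi_k$. Factoring over $R$ and collecting the scalar factors, I would write
\[
K(\phi,\psi)=\prod_{k=1}^n K(\phi_k,\psi_k)=\Big(\prod_{k=1}^n |I_k|\Big)\prod_{k=1}^n K(\phi_k',\psi_k').
\]
Here $\prod_k |I_k|=|I_1\times\dots\times I_n|=|I|$. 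Under the canonical isomorphism $R/I\cong (R_1/I_1)\times\dots\times(R_n/I_n)$, the primitive characters $\phi'$, $\psi'$ on $R/I$ decompose as $\phi'=\phi_1'\times\dots\times\phi_n'$ and $\psi'=\psi_1'\times\dots\times\psi_n'$, so a second application of factorization --- now over $R/I$ --- identifies $\prod_k K(\phi_k',\psi_k')$ with $K(\phi',\psi')$. This yields $K(\phi,\psi)=|I|\,K(\phi',\psi')$, as desired.

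I expect no serious obstacle: the argument is a clean de-localization. The only points requiring care are bookkeeping ones --- confirming via Lemma~\ref{lem: condprod} that a mismatch of conductors occurs in at least one local factor, and checking that the primitive characters $\phi'$, $\psi'$ furnished by the statement really do coincide with the products $\phi_1'\times\dots\times\phi_n'$ and $\psi_1'\times\dots\times\psi_n'$ under the product decomposition of $R/I$. Both follow from the uniqueness of the product factorization of characters and the compatibility of the decomposition $R\cong\prod_k R_k$ with passage to quotients.
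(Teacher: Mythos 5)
Your proposal is correct and follows essentially the same route as the paper's own proof: decompose into local factors, invoke Lemma~\ref{lem: condprod} to track conductors, apply the local vanishing/reduction of Proposition~\ref{prop: van-red} componentwise, and use the factorization property twice (over $R$ and over $R/I\cong (R_1/I_1)\times\dots\times(R_n/I_n)$) in part (ii). The bookkeeping points you flag at the end are precisely the ones the paper also addresses, so there is nothing to correct.
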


\begin{proof} Let $\phi=\phi_1\times\dots\times \phi_n$ and $\psi=\psi_1\times\dots\times \psi_n$ be the decompositions into local factors, each of which is non-trivial. 

(i) Since
\[K(\phi,\psi)=\prod_{k=1}^n K(\phi_k,\psi_k),\]
it suffices to argue that one of the factors vanishes. By Lemma~\ref{lem: condprod}, the conductors of $\phi$ and $\psi$ decompose as products of conductors of $\phi_k$'s, respectively $\psi_k$'s. So there is an index $k$ for which 
$\phi_k$ and $\psi_k$ have different conductors, and then $K(\phi_k,\psi_k)=0$ by the local version of vanishing.

(ii) By Lemma~\ref{lem: condprod}, we have $I=I_1\times\dots\times I_n$, where $I_k$ is the conductor of both $\phi_k$ and $\psi_k$. The factorization property, and the local version of reduction, give
\begin{align*}
K(\phi,\psi)=\prod_{k=1}^n K(\phi_k,\psi_k)=\prod_{k=1}^n |I_k|\:K(\phi'_k,\psi'_k)=|I|\:\prod_{k=1}^n K(\phi'_k,\psi'_k).
\end{align*}
Now $R/I$ is naturally isomorphic to $(R_1/I_1)\times\dots \times (R_n/I_n)$. Under this isomorphism, $\phi'$ corresponds to $\phi'_1\times\dots\times \phi'_n$, and $\psi'$ to $\psi'_1\times\dots\times \psi'_n$. Using the factorization property once again, we have
\[\prod_{k=1}^n K(\phi'_k,\psi'_k)=K(\phi',\psi')\]
and the desired claim follows.
\end{proof}

Note that vanishing and reduction need not hold as stated if we merely assume that $\phi$ and $\psi$ are non-trivial characters of $R$.

We end this section by discussing a parameterization of Kloosterman sums, a view that we will adopt later on. Let $R$ be a finite ring, and assume $\psi$ is a primitive character of $R$. By Lemma~\ref{lem: scale}, all the characters of $R$ are scalings of $\psi$, so every Kloosterman sum over $R$ has the form $K(a.\psi, b.\psi)$ for some $a,b\in R$. If both $a$ and $b$ are non-units then, by the reduction property, we may view the Kloosterman sum $K(a.\psi, b.\psi)$ as coming from a quotient of $R$. So we focus on the proper case when at least one of $a$ or $b$ is a unit in $R$. Next, thanks to unit-shifting, we can absorb the two variables, $a$ and $b$, into one. In summary, the Kloosterman sums over $R$ are essentially captured by the collection $\{K(a): a\in R\}$, where
\begin{align*}
K(a):=K(\psi, a.\psi)=\sum_{u\in R^\times} \psi(u+au^{-1}).
\end{align*}

Consider now the core case when the ring $R$ is local. Then the vanishing property, Proposition~\ref{prop: van-red}(i), implies that $K(a)=0$ whenever $a$ is a non-zero non-unit. Furthermore, $K(0)=K(\psi, \ct)$ is computed in Proposition~\ref{prop: rama}. So, in this case, the Kloosterman sums over $R$ are essentially captured by the restricted collection $\{K(a): a\in R^\times\}$.

The notation $K(a)$, introduced above, is convenient, but it suppresses the dependence on the chosen primitive character $\psi$. If $\rho$ is another primitive character, then $\rho=c.\psi$ for some unit $c\in R^\times$. We then have 
\[K(\rho, a.\rho)=K(c.\psi, (ac).\psi)=K(\psi, (ac^2).\psi)\] 
for any $a\in R$, by unit-shifting. Therefore both Kloosterman collections, the full $\{K(a): a\in R\}$, as well as the restricted $\{K(a): a\in R^\times\}$, are independent of the choice of primitive character $\psi$.

\section{Multiplicative characters}\label{sec: multichar}
A \emph{multiplicative character} of a finite ring $R$ is a character of the multiplicative group of units $R^\times$.

\subsection{Primitive multiplicative characters} Let $R$ be a finite local ring, with maximal ideal $M$. In order to define the multiplicative analogue of a conductor, recall first that each ideal $I\subseteq M$ defines a subgroup $1+I\subseteq R^\times$. We claim that, for each subgroup $H\subseteq R^\times$, there is a largest ideal $I\subseteq M$ so that $1+I\subseteq H$. For let $I_1, I_2\subseteq M$ be ideals satisfying $1+I_1, 1+I_2\subseteq H$. We have $1+I_1+I_2\subseteq (1+I_1)(1+I_2)$. Indeed, let $a_1\in I_1$ and $a_2\in I_2$; then 
\[1+a_1+a_2=(1+a_1)(1+(1+a_1)^{-1}a_2)\in (1+I_1)(1+I_2).\] 
It follows that the ideal $I_1+I_2\subseteq M$ also satisfies $1+I_1+I_2\subseteq H$, and the claim follows.

Let $\chi$ be a non-trivial multiplicative character of $R$. Applying the above remark to $H=\ker\chi$, we find that there is a largest ideal $I\subseteq M$ with the property that $\chi$ is trivial on the subgroup $1+I$; the ideal $I$ is the \emph{conductor} of $\chi$. By convention, the trivial multiplicative character $\ct$ has conductor $R$. 

\begin{defn}
A multiplicative character $\chi$ is said to be \emph{primitive} if its conductor is the zero ideal.
\end{defn}

Multiplicative characters of $R$ with conductor $I$ correspond to primitive multiplicative characters of the quotient ring $R/I$.

\subsection{The quadratic character} A prominent multiplicative character is the quadratic character, which is properly defined under some ring-theoretic requirements. 

Let $R$ be a local ring of odd characteristic; in other words, the residue field $R/M$ has odd characteristic. The \emph{quadratic character} of $R$, denoted by $\sigma$, is the multiplicative character induced by the quadratic multiplicative character of the residue field $R/M$. Thus $\sigma$ maps $R^\times$ onto $\pm 1$.

\begin{lem}
The quadratic character $\sigma$ enjoys the following properties:
\begin{itemize}
\item[(i)] The kernel of $\sigma$ is $(R^\times)^2$, the set of squares in $R^\times$. 
\item[(ii)] $\sigma$ has conductor $M$, and it is the only multiplicative character of $R$ of order $2$.
\item[(iii)] If $c\in R^\times$, then the equation $y^2=c$ has $1+\sigma(c)$ solutions.
\end{itemize}
\end{lem}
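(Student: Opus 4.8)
The plan is to reduce all three items to the residue field by using that $\sigma$ factors as $\sigma=\sigma_0\circ[\,\cdot\,]$, where $\sigma_0$ is the quadratic character of the cyclic group $(R/M)^\times$, i.e.\ the unique character of order two, with kernel the squares $((R/M)^\times)^2$. The one structural fact I would isolate at the outset, and then reuse throughout, is the inclusion $1+M\subseteq(R^\times)^2$. Since $R$ has odd characteristic, its characteristic is a power of the odd residue prime $p$, so the additive group of $R$, and hence the subgroup $M$, has odd order; thus $1+M$ is a finite abelian group of odd order, on which squaring is an automorphism, giving $1+M=(1+M)^2\subseteq(R^\times)^2$. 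I expect this to be the linchpin: it is the only place where odd characteristic genuinely enters items (i) and (ii).

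For (i), the inclusion $(R^\times)^2\subseteq\ker\sigma$ is immediate from $\sigma(x^2)=\sigma(x)^2=1$. For the reverse inclusion I would take $u$ with $\sigma(u)=1$, so that $[u]$ is a square in $(R/M)^\times$, say $[u]=[x]^2$; then $x^{-2}u\in 1+M$ is a square by the linchpin, and therefore $u\in(R^\times)^2$. This yields $\ker\sigma=(R^\times)^2$.

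For (ii), since $\sigma$ factors through reduction mod $M$ it is trivial on $1+M$, and as $M$ is the maximal ideal this already exhibits $M$ as the largest ideal $I\subseteq M$ with $\sigma$ trivial on $1+I$; hence the conductor is $M$. For uniqueness, (i) gives $[R^\times:(R^\times)^2]=[R^\times:\ker\sigma]=2$ because $\sigma$ surjects onto $\{\pm1\}$. Any multiplicative character of order dividing two is trivial on $(R^\times)^2$, hence factors through the two-element group $R^\times/(R^\times)^2$, which carries a single nontrivial character; that character must be $\sigma$.

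For (iii), I would first count the square roots of unity. From $z^2=1$ we get $(z-1)(z+1)=0$, and since $2\in R^\times$ the two factors cannot both lie in $M$; as $R$ is local, one of them is then a unit, forcing the other to vanish, so $z=\pm1$, and these are distinct in odd characteristic. Hence the squaring map is exactly two-to-one onto its image $(R^\times)^2$. By (i), $c$ is a square precisely when $\sigma(c)=1$: in that case $y^2=c$ has two solutions, matching $1+\sigma(c)=2$; when $\sigma(c)=-1$ there are none, matching $1+\sigma(c)=0$. The hard part, such as it is, lies in the reverse inclusion of (i), which rests on $1+M\subseteq(R^\times)^2$; the root-of-unity count is the only other point where localness and odd characteristic must be invoked directly.
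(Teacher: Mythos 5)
Your proof is correct, but it is organized around a different key lemma than the paper's. Your linchpin is the inclusion $1+M\subseteq(R^\times)^2$, obtained from the fact that squaring is an automorphism of the odd-order group $1+M$; this lets you prove (i) by lifting a square root from the residue field, and it makes (ii)-uniqueness purely group-theoretic: order-$2$ characters kill $(R^\times)^2=\ker\sigma$, hence factor through the two-element quotient $R^\times/(R^\times)^2$. The paper instead proves (i) by an index count: $\ker\sigma$ has index $2$, and $(R^\times)^2$ has index $2$ because the squaring endomorphism of $R^\times$ has kernel exactly $\{\pm1\}$ (via the computation $u=1+m$, $m(2+m)=0$, $2+m$ a unit); for (ii)-uniqueness the paper restricts a hypothetical order-$2$ character to $1+M$, uses the odd order of $1+M$ to see it is trivial there, and then descends to the residue field, invoking uniqueness of the quadratic character of a finite field. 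The trade-off: the paper's two-to-one squaring computation is exactly what (iii) needs, so it serves double duty for (i) and (iii), whereas your single structural lemma does the work for both (i) and (ii) and keeps the uniqueness argument independent of the residue field; your root-of-unity count in (iii), via $(z-1)(z+1)=0$ with the two factors not both in $M$, is a slightly slicker variant of the paper's. Both routes consume odd characteristic in equivalent ways ($2\in R^\times$ versus $|1+M|$ odd). One small point worth a line in a polished write-up: your claim that the characteristic of $R$ is a power of the residue prime $p$ deserves justification (e.g., $p\cdot 1\in M$ is nilpotent, so some $p^N\cdot 1=0$); alternatively one can argue, as the paper does, that $|M|$ is a power of $|R/M|$ via the filtration quotients $M^k/M^{k+1}$ being $R/M$-vector spaces.
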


\begin{proof} (i) Squares in $R^\times$ map to squares in $(R/M)^\times$, so $(R^\times)^2\subseteq \ker \sigma$. Note that $\ker \sigma$ has index $2$ in $R^\times$. So if we show that $(R^\times)^2$ has index $2$ in $R^\times$, then the desired equality, $(R^\times)^2=\ker \sigma$, will follow. Consider then the squaring homomorphism $R^\times\to R^\times$, given by $u\mapsto u^2$. Its image is $(R^\times)^2$, and we now argue that its kernel is the two-element set $\{\pm 1\}$. If $u\in R$ satisfies $u^2=1$, then in the residue field $R/M$ we have $[u]^2=1$ and so $[u]=\pm 1$. Consider the case $[u]=1$. Then $u=1+m$ for some $m\in M$, and the equation $u^2=1$ turns into $m(2+m)=0$. As $2+m$ is a unit in $R$, we must have $m=0$, that is, $u=1$. The case $[u]=-1$ is similar.

(ii) Clearly, $\sigma$ is non-trivial, but it is trivial on $1+M$. So $M$ is the conductor of $\sigma$. It is also clear that $\sigma^2=\ct$. In order to prove uniqueness, let $\chi$ be a non-trivial multiplicative character of $R$ satisfying $\chi^2=\ct$. In particular, $\chi^2=\ct$ on the subgroup $1+M$. But $1+M$ has odd size; this follows by observing that each additive group $M^k/M^{k+1}$, for $k=1,2,\dots$, is naturally a vector space over the residue field $R/M$, and so $|M|$ is a power of $|R/M|$. Therefore, $\chi$ is trivial on $1+M$. This means that $\chi$ is induced from a multiplicative character of $R/M$ of order $2$. The only such character on $R/M$ is the quadratic character, and we conclude that $\chi=\sigma$. 

(iii) If $c\notin (R^\times)^2$, then $y^2=c$ has $0=1+\sigma(c)$ solutions. Assume now $c\in (R^\times)^2$; we show that $y^2=c$ has $2=1+\sigma(c)$ solutions. Put $c=d^2$, where $d\in R^\times$. Then $y^2=c$ is equivalent to $(yd^{-1})^2=1$, and we know from part (i) that the latter equation has precisely two solutions. \end{proof}

\begin{lem}\label{lem: solutions}
Consider the system
\[u+v=b, \quad uv=c\]
where $b\in R$ and $c\in R^\times$. If the discriminant $\Delta=b^2-4c$ is a unit in $R$, then the system has $1+\sigma(\Delta)$ unit solutions. In particular, for $b\in M$ there are $1+\sigma(-c)$ unit solutions.
\end{lem}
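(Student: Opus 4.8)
The plan is to recognize $u$ and $v$ as the two roots of the quadratic $t^2-bt+c$, and to count these roots by completing the square, thereby reducing everything to the square-root count furnished by part (iii) of the preceding lemma. First I would note that because $R$ has odd characteristic, $2$---and hence $4=2^2$---is a unit in $R$. A pair $(u,v)$ solves the system precisely when $u$ is a root of $t^2-bt+c=0$ and $v=b-u$: if $u^2-bu+c=0$, then $u(b-u)=bu-u^2=c$ automatically. Thus the ordered solution pairs $(u,v)$ are in bijection with the roots $t$ of the quadratic, via $t\mapsto (t,b-t)$. Moreover, since $uv=c\in R^\times$ and a product being a unit forces both factors to be units, every solution pair automatically consists of units; there is no separate unit condition left to impose.

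Next I would complete the square. Multiplying $t^2-bt+c=0$ by the unit $4$ and rearranging gives $(2t-b)^2=b^2-4c=\Delta$. Since $2\in R^\times$, the substitution $w=2t-b$ is a bijection of $R$, so the roots $t$ of the quadratic correspond bijectively to the solutions $w$ of $w^2=\Delta$. As $\Delta$ is assumed to be a unit, part (iii) of the preceding lemma supplies exactly $1+\sigma(\Delta)$ such $w$. Chaining the two bijections, the system has $1+\sigma(\Delta)$ unit solutions, as claimed.

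For the final assertion, suppose $b\in M$. Then $b^2\in M$ while $4c\in R^\times$, and in a local ring a unit minus an element of $M$ is again a unit: writing $\Delta=-4c+b^2=-4c\big(1-(4c)^{-1}b^2\big)$ with $1-(4c)^{-1}b^2\in 1+M\subseteq R^\times$, we see that $\Delta$ is automatically a unit, so the hypothesis of the first part is met. It remains to evaluate $\sigma(\Delta)$. Since $\sigma$ factors through the residue field $R/M$ and $\Delta\equiv -4c \pmod{M}$, we obtain $\sigma(\Delta)=\sigma(-4c)=\sigma(-1)\,\sigma(4)\,\sigma(c)$; here $\sigma(4)=\sigma(2)^2=1$ because $4$ is a square, so $\sigma(\Delta)=\sigma(-1)\sigma(c)=\sigma(-c)$, yielding $1+\sigma(-c)$ unit solutions.

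The computation is essentially routine once the reduction to $w^2=\Delta$ is in place. The only points requiring care---and the main obstacle to a clean write-up---are the automatic unitality of the solution pairs (a consequence of $c\in R^\times$) and the verification that $\Delta$ lands in $R^\times$ in the case $b\in M$, which is precisely where the local-ring structure is used.
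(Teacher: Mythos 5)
Your proposal is correct and follows essentially the same route as the paper: both reduce the system to the single equation $u(b-u)=c$, complete the square (using that $2$ is a unit) to obtain $(2u-b)^2=\Delta$, invoke the square-counting formula $1+\sigma(\Delta)$ from part (iii) of the preceding lemma, and conclude the case $b\in M$ from $\Delta\equiv -4c \pmod M$. Your write-up is merely more explicit on two points the paper leaves implicit---that $\Delta$ is automatically a unit when $b\in M$, and that $\sigma(-4c)=\sigma(-c)$ because $4$ is a square---both of which are fine.
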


\begin{proof}
Note first that the condition $uv=c\in R^\times$ implies that the solutions are automatically units. The given system amounts to solving the equation $y(b-y)=c$, which is equivalent--thanks to $2\in R$ being a unit--to $(2y-b)^2=b^2-4c=\Delta$. This equation has $1+\sigma(\Delta)$ solutions. If, furthermore, $b\in M$, then $\Delta\equiv -4c$ mod $M$. Therefore $\sigma(\Delta)=\sigma(-4c)=\sigma(-c)$.
\end{proof}

If $R$ is a finite field, then the number of solutions of any quadratic equation can be given in terms of the quadratic character. We do not know to what extent this works over a general local ring $R$. As we have seen, one can count the number of solutions of some quadratic equations, but there is no apparent comprehensive formula involving the quadratic character that covers all quadratic equations.

\section{Gauss and Jacobi sums over finite rings}\label{sec: GJ}

\begin{defn}
Let $R$ be a finite ring. A \emph{Gauss sum} over $R$ is a character sum of the form
\[G(\psi, \chi)=\sum_{u\in R^\times} \psi(u)\: \chi(u)\]
where $\psi$ is an additive character of $R$, and $\chi$ is a multiplicative character of $R$. \end{defn}

When $\psi$ is implicit, we often write $G(\chi)$ instead of $G(\psi, \chi)$.

\begin{defn}
Let $R$ be a finite ring. A \emph{Jacobi sum} over $R$ is a character sum of the form
\[J(\chi, \eta)=\sum_{\substack{u,v\in R^\times \\ u+v=1}} \chi(u)\: \eta(v)\]
where $\chi$ and $\eta$ are multiplicative characters of $R$.
\end{defn}

Gauss and Jacobi sums satisfy the three basic properties that Kloosterman sums satisfy: factorization, and vanishing and reduction over local rings. Thanks to the factorization property, we may assume the following for the remainder of this section.

\begin{datum} $R$ is a local ring. 
\end{datum}

The next two lemmas collect the properties of Gauss sums and of Jacobi sums that will be needed in what follows. Note that the first part of item (i) in each lemma is an instance of the vanishing property.

\begin{lem}[\cite{L}]\label{lem: gauss}
Let $\psi$ be a primitive additive character, and $\chi$ a multiplicative character. Then the following hold.
\begin{itemize}
\item[(i)] If $\chi$ is neither trivial nor primitive, then $G(\chi)=0$. Moreover, if $\chi$ is trivial and $R$ is not a field, then $G(\chi)=0$ as well.
\item[(ii)] If $\chi$ is primitive, then $G(\chi)\: G(\overline{\chi})=\chi(-1)\: |R|$. Equivalently, $|G(\chi)|=|R|^{1/2}$.
\end{itemize}
\end{lem}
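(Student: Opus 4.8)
The plan is to dispatch the three cases of item (i) and the two regimes of item (ii) using only orthogonality together with the conductor formalism already in place; in each case the work reduces to recognizing an inner sum as an instance of Proposition~\ref{prop: rama} or Lemma~\ref{lem: primsum}. For item (i), suppose first that $\chi$ is non-trivial and non-primitive, so its conductor $I$ is a nonzero proper ideal, necessarily contained in $M$, with $1+I\subseteq\ker\chi$. I would partition $R^\times$ into cosets of the subgroup $1+I$. Since $u$ is a unit the coset $u(1+I)$ equals the translate $u+I$, and $\chi$ is constant on it, equal to $\chi(u)$; the additive contribution factors as $\sum_{a\in I}\psi(u+a)=\psi(u)\sum_{a\in I}\psi(a)$, and Lemma~\ref{lem: primsum} (with $c=1$, using $I\neq(0)$ and $\psi$ primitive) gives $\sum_{a\in I}\psi(a)=|I|\,[1\in I^\perp]=0$. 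Hence each coset contributes zero and $G(\chi)=0$. For the trivial character with $R$ not a field, I would simply note $G(\ct)=\sum_{u\in R^\times}\psi(u)=K(\psi,\ct)$, which Proposition~\ref{prop: rama} evaluates to $-\,[R\textrm{ is a field}]=0$.

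For item (ii) I would fold the product into a single sum. Writing $\overline{\chi}(v)=\chi(v^{-1})$ and substituting $w=uv^{-1}$ (so that $u+v=(w+1)v$ and, for each fixed $w$, the map $v\mapsto u=wv$ permutes $R^\times$), one obtains
\[G(\chi)\:G(\overline{\chi})=\sum_{w\in R^\times}\chi(w)\sum_{v\in R^\times}\psi\big((w+1)v\big).\]
The inner sum is $K\big((w+1).\psi,\ct\big)$, which by Proposition~\ref{prop: rama} equals $|R|\,[w=-1]-|M|\,[w+1\in M^\perp]$. The term $|R|\,[w=-1]$ contributes exactly $\chi(-1)\,|R|$, the claimed value.

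The crux, and the step I expect to be the main obstacle, is showing that the residual $M^\perp$-term vanishes; this is precisely where primitivity of $\chi$ is used, and I would split on whether $R$ is a field. If $R$ is a field then $M=(0)$, $M^\perp=R$ and $|M|=1$, so the residual term is $-\sum_{w\in R^\times}\chi(w)=0$, since a primitive (hence non-trivial) character of a field sums to zero. If $R$ is not a field then $M$ is a nonzero nilpotent ideal, so its top nonvanishing power lies in $M^\perp$ and $M^\perp\neq(0)$; moreover $M^\perp\subseteq M$ (a unit $u\in M^\perp$ would give $M=u^{-1}(uM)=0$), so $-1+M^\perp\subseteq R^\times$ and the constraint "$w\in R^\times,\ w+1\in M^\perp$" becomes simply $w\in -1+M^\perp$. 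Writing $w=-1+t$ and factoring $\chi(w)=\chi(-1)\chi(1-t)$ turns the residual sum into $\chi(-1)\sum_{s\in 1+M^\perp}\chi(s)$. As $M^\perp$ is a nonzero ideal inside $M$, primitivity forbids $\chi$ from being trivial on $1+M^\perp$; thus $\chi$ restricts to a non-trivial character of this subgroup and the sum vanishes by orthogonality. In both regimes this yields $G(\chi)\,G(\overline{\chi})=\chi(-1)\,|R|$.

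For the equivalent form $|G(\chi)|=|R|^{1/2}$, I would relate the conjugate of $G(\chi)$ to $G(\overline{\chi})$: conjugating termwise and substituting $u\mapsto -u$ gives $\overline{G(\chi)}=\chi(-1)\,G(\overline{\chi})$, using that $\chi(-1)=\pm1$ is real. Then $|G(\chi)|^2=\chi(-1)\,G(\chi)\,G(\overline{\chi})=\chi(-1)^2\,|R|=|R|$, as desired.
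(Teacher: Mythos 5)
Your proof is correct. One point of comparison worth making explicit: the paper does not actually prove Lemma~\ref{lem: gauss} --- it is quoted from Lamprecht \cite{L}, and only the companion Jacobi-sum lemma (Lemma~\ref{lem: jacobi}) is proved in the text --- so your argument supplies a missing proof rather than an alternative to one, and it does so entirely with the paper's own toolkit and in the paper's own style. Your part (i) decomposes $R^\times$ into cosets of $1+I$, where $I$ is the conductor of $\chi$, which is precisely the ``orbital sumlet'' mechanism of Lemma~\ref{lem: sumlet}, finished off by Lemma~\ref{lem: primsum}; the trivial-character case is Proposition~\ref{prop: rama} verbatim. In part (ii), the reduction of $G(\chi)\,G(\overline{\chi})$ to $\sum_{w\in R^\times}\chi(w)\,K\big((w+1).\psi,\ct\big)$ and the evaluation of these Ramanujan sums as $|R|\,[w=-1]-|M|\,[w+1\in M^\perp]$ (valid because $\psi$ is primitive, so $(w+1).\psi$ is trivial on $M$ exactly when $w+1\in M^\perp$) are both correct, and you handle the two regimes of the residual term soundly: in the field case the constraint $w+1\in M^\perp=R$ is vacuous and the residual is a full sum of the non-trivial character $\chi$; in the non-field case your justifications that $M^\perp\neq(0)$ (top nonvanishing power of the nilpotent ideal $M$ --- equivalently, the paper's own bottleneck lemma on minimal ideals) and $M^\perp\subseteq M$ are valid, so the residual collapses to $\chi(-1)\sum_{s\in 1+M^\perp}\chi(s)$, which vanishes because primitivity of $\chi$ means exactly that $\chi$ is non-trivial on the subgroup $1+M^\perp$. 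The closing identity $\overline{G(\chi)}=\chi(-1)\,G(\overline{\chi})$, using $\chi(-1)=\pm1$, correctly converts the product formula into $|G(\chi)|=|R|^{1/2}$.
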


\begin{lem}[\cite{W}]\label{lem: jacobi}
Let $\chi$ and $\eta$ be multiplicative characters.
\begin{itemize}
\item[(i)] Assume $\eta$ is primitive. If $\chi$ is neither trivial nor primitive, then $J(\chi,\eta)=0$. Moreover, if $\chi$ is trivial and $R$ is not a field, then $J(\chi,\eta)=0$ as well.
\item[(ii)] Assume $\chi$ and $\eta$ are primitive. If $R$ is not a field, and $\chi\eta$ is neither trivial nor primitive, then $J(\chi,\eta)=0$.
\item[(iii)] If $\chi\eta$ is primitive, then $G(\chi)\: G(\eta)= J(\chi,\eta)\: G(\chi\eta)$. 
\end{itemize}
\end{lem}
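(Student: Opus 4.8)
The three parts rest on a single device: organize the relevant sums by the value of $u+v$. For part (iii) I would expand $G(\chi)\,G(\eta)=\sum_{u,v\in R^\times}\psi(u+v)\,\chi(u)\,\eta(v)$ and collect terms according to $t=u+v$, writing $G(\chi)G(\eta)=\sum_{t\in R}\psi(t)\,N(t)$ where $N(t)=\sum_{u+v=t,\,u,v\in R^\times}\chi(u)\eta(v)$. When $t\in R^\times$ the substitution $u=ta$, $v=tb$ with $a+b=1$ factors the inner sum as $N(t)=(\chi\eta)(t)\,J(\chi,\eta)$, and summing $\psi(t)(\chi\eta)(t)$ over units reproduces $J(\chi,\eta)\,G(\chi\eta)$. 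So the relation in (iii) is equivalent to the assertion that the remaining part, the sum over $t\in M$, vanishes when $\chi\eta$ is primitive.

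That vanishing is the technical heart of the proof. For $t\in M$ and $u\in R^\times$ the complement $t-u$ is automatically a unit, so I can rewrite the $M$-part as an unconstrained double sum and, after factoring $\eta$ and rescaling the inner variable by a unit, recast it as a weighted sum of Gauss sums $\sum_{m\in M}\eta(1-m)\,G(m.\psi,\chi\eta)$ taken against the non-primitive additive characters $m.\psi$. Each of these Gauss sums vanishes: the $m=0$ term is $\sum_u(\chi\eta)(u)=0$ because $\chi\eta$ is non-trivial, while for $m\neq 0$ the character $m.\psi$ is invariant under multiplication by the subgroup $1+(m)^\perp$ (here $(m)^\perp\neq(0)$ since $m$ is a zero-divisor), so replacing $u$ by $su$ with $s\in 1+(m)^\perp$ shows $G(m.\psi,\chi\eta)=(\chi\eta)(s)\,G(m.\psi,\chi\eta)$; primitivity of $\chi\eta$ guarantees some such $s$ with $(\chi\eta)(s)\neq 1$, forcing the Gauss sum to $0$. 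This invariance-averaging step is where primitivity of $\chi\eta$ is really used, and it is the part I expect to require the most care: in the field case only $t=0$ occurs, whereas over a non-field local ring the whole coset structure of $M$ must be controlled.

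For part (i) I would argue directly. When $\chi=\ct$, orthogonality gives $J(\ct,\eta)=\sum_{v\in R^\times}\eta(v)-\sum_{v\in 1+M}\eta(v)$ after removing the excluded values $v\in 1+M$; the first sum vanishes since $\eta$ is non-trivial, and the second vanishes because a primitive $\eta$ cannot be trivial on $1+M$ when $M\neq(0)$. When $\chi$ is neither trivial nor primitive, let $I\neq(0)$ be its conductor and write $J(\chi,\eta)=\sum_{w}\chi(w)\eta(1-w)$ over $\{w: w,1-w\in R^\times\}$. Since $s\in 1+I\subseteq 1+M$ satisfies $\chi(sw)=\chi(w)$ and the substitution $w\mapsto sw$ preserves the index set, I can average over $s\in 1+I$; the inner sum $\sum_{s\in 1+I}\eta(1-sw)$ factors, after a unit rescaling of the summation variable, as $\eta(1-w)\sum_{c\in 1+I}\eta(c)$, and this last sum is $0$ because $\eta$ is primitive and $I\neq(0)$. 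Hence every inner sum vanishes and $J(\chi,\eta)=0$.

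Finally, part (ii) I would reduce to part (i) through a reflection identity. Weighting the decomposition $N(t)$ by $\overline{\chi\eta}(t)$ and summing over $t\in R^\times$ gives $|R^\times|\,J(\chi,\eta)$ on one side; computing the same quantity directly, via the substitution $v=ut$, collapses the $u$-sum and yields $J(\chi,\eta)=\eta(-1)\,J(\eta,\overline{\chi\eta})$. Under the hypotheses of (ii) the character $\overline{\chi\eta}$ has the same nonzero proper conductor as $\chi\eta$, so it is neither trivial nor primitive, while $\eta$ is primitive; by the symmetry $J(\eta,\overline{\chi\eta})=J(\overline{\chi\eta},\eta)$ and part (i), this Jacobi sum is $0$, whence $J(\chi,\eta)=0$. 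Thus (i) is self-contained, (ii) follows from it by reflection, and (iii) is independent, with the Gauss-sum vanishing that powers (iii) being the one genuinely delicate point throughout.
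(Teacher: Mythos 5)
Your proposal is correct, and in two of the three parts it takes a genuinely different route from the paper. The paper runs everything through a single device, an ``orbital sumlet'' lemma: the additive group of a well-chosen proper ideal $I$ acts freely on the index set $\{(u,v)\in R^\times\times R^\times:\,u+v=a\}$ by $(u,v)\mapsto(u-r,v+r)$, and each of (i), (ii), (iii) is proved by showing every orbit sum vanishes ($I$ being the conductor of $\chi$, the conductor of $\chi\eta$, and $(a)^\perp$ respectively). You replace these additive orbit arguments by multiplicative ones. For (i) your scaling average over $w\mapsto sw$, $s\in 1+I$, is the same phenomenon in multiplicative guise and is equally valid. The real divergence is in (ii): the paper proves it directly by a delicate chain of substitutions inside the sumlet, exploiting $\chi(z)=\eta(z^{-1})$ on $1+I$, whereas you reduce (ii) to (i) via the reflection identity $J(\chi,\eta)=\eta(-1)\,J(\overline{\chi\eta},\eta)$ (in essence the substitution $u\mapsto u^{-1}$ in the Jacobi sum, or equivalently your weighted-sum derivation, both of which check out); this is cleaner, makes (ii) a corollary of (i), and incidentally shows the primitivity of $\chi$ and the non-field hypothesis are not needed there. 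In (iii) both proofs expand $G(\chi)G(\eta)=\sum_t\psi(t)J_t(\chi,\eta)$ and treat unit $t$ identically; for the $M$-part the paper kills each $J_a$, $a\in M$, by a sumlet with $I=(a)^\perp$, while you repackage the whole $M$-part as $\eta(-1)\sum_{m\in M}\eta(1-m)\,G(m.\psi,\chi\eta)$ and kill each Gauss sum by averaging over $1+(m)^\perp$ --- the same use of primitivity of $\chi\eta$, but isolated as a clean dual vanishing statement (primitive multiplicative character against a non-primitive additive character) that is of independent interest. What the paper's approach buys is uniformity: one lemma explains all the vanishing. What yours buys is modularity, plus a slightly more general (ii); the two are comparably elementary.
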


In the last relation, we use the shorthand introduced in Lemma~\ref{lem: gauss}--namely, the Gauss sums are all taken with respect to a fixed primitive character.

We include a proof of Lemma~\ref{lem: jacobi}. Our approach is distinct from that of \cite{W}. (Also, we do not know what to make of the basic fact $(*)$ in \cite[p.286]{W}, a fact which gets used in most arguments of \cite{W}; perhaps there are some missing hypotheses, for it is false as stated.) The basic idea is to account for the vanishing of Jacobi sums by means of `orbital sumlets'.

\begin{lem}\label{lem: sumlet} Let $a\in R$, and consider the generalized Jacobi sum
\[J_a(\chi,\eta)=\sum_{\substack{u,v\in R^\times \\ u+v=a}} \chi(u)\: \eta(v).\] 
If there is a proper ideal $I$ so that $\sum_{r\in I} \chi(u-r)\: \eta(v+r)=0$ whenever $u,v\in R^\times$ satisfy $u+v=a$, then $J_a(\chi,\eta)=0$.
\end{lem}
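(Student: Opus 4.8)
The plan is to exhibit the solution set as a disjoint union of orbits under a natural group action, so that $J_a(\chi,\eta)$ breaks up as a sum of exactly the sumlets assumed to vanish. Write $S_a=\{(u,v)\in R^\times\times R^\times: u+v=a\}$, so that $J_a(\chi,\eta)=\sum_{(u,v)\in S_a}\chi(u)\,\eta(v)$. The additive group $(I,+)$ acts on the line $\{(u,v): u+v=a\}$ by the shift $r\cdot(u,v)=(u-r,\,v+r)$, which visibly preserves the constraint $u+v=a$. The orbital sum of this action over the orbit of a point $(u,v)$ is precisely $\sum_{r\in I}\chi(u-r)\,\eta(v+r)$, which is the expression the hypothesis declares to be zero.

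First I would check that the action restricts to $S_a$, i.e.\ that it preserves unitality. This is where the hypotheses on $I$ and $R$ enter decisively. Since $R$ is local with maximal ideal $M$ (the standing assumption) and $I$ is a proper ideal, we have $I\subseteq M$; hence for $u\in R^\times$ and $r\in I$ the element $u-r$ reduces to $[u]\neq 0$ modulo $M$ and is therefore again a unit, and likewise $v+r$ for $v\in R^\times$. Thus $(I,+)$ genuinely acts on $S_a$.

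Next I would observe that the action is free: if $(u-r,v+r)=(u,v)$ then $r=0$. Consequently every orbit is a coset of size exactly $|I|$, of the form $\{(u-r,v+r): r\in I\}$, and $S_a$ is the disjoint union of these orbits. Choosing a set $T$ of orbit representatives, I would regroup
\[J_a(\chi,\eta)=\sum_{(u,v)\in T}\ \sum_{r\in I}\chi(u-r)\,\eta(v+r),\]
and then invoke the hypothesis to annihilate each inner sum, yielding $J_a(\chi,\eta)=0$.

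I do not expect a serious obstacle; the lemma is essentially a repackaging of the hypothesis as an orbit decomposition. The one point that genuinely requires the ambient assumptions — and the only step I would take care to justify — is that shifting by $I$ keeps a unit a unit, which relies on $I$ being contained in $M$, that is, on $R$ being local and $I$ being proper. Once unitality is preserved, the remainder is the formal algebra of partitioning a sum along the orbits of a free action.
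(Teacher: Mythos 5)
Your proof is correct and is essentially the paper's own argument: the paper likewise observes that $(I,+)$ acts freely on $\{(u,v)\in R^\times\times R^\times: u+v=a\}$ by $r.(u,v)=(u-r,v+r)$, so the total sum decomposes into orbital sumlets, each of which vanishes by hypothesis. You additionally spell out the point the paper leaves implicit---that $I\subseteq M$ guarantees the shift preserves units---which is a worthwhile detail but not a departure in method.
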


The lemma is actually obvious, as soon as we adopt the following viewpoint: the indexing set $\{(u,v)\in R^\times\times R^\times: u+v=a\}$ admits a free action by the additive group $I$, given by $r.(u,v)=(u-r,v+r)$. So the total sum vanishes provided that the partial sum along each orbit vanishes.

\begin{proof}[Proof of Lemma~\ref{lem: jacobi}] 
(i) We have already pointed out that the first part is an instance of the vanishing property. Here we argue both parts via Lemma~\ref{lem: sumlet}, in a uniform way. Whether $\chi$ is neither trivial nor primitive, or $\chi$ is trivial and $R$ is not a field, there is  a proper ideal $I$ so that $\chi=\ct$ on $1+I$. For $u,v\in R^\times$ satisfying $u+v=1$, we have
\begin{align*}
\sum_{r\in I} \chi(u-r)\: \eta(v+r)=\chi(u) \sum_{r\in I} \eta(v+r)=\chi(u)\: \eta(v) \sum_{r\in I} \eta(1+r)=0
\end{align*}
as $\eta$ is primitive.

(ii) As $\chi\eta$ is not primitive, and $R$ is not a field, there is a proper ideal $I$ so that $\chi\eta=\ct$ on $1+I$, that is, $\chi(z)=\eta(z^{-1})$ for $z\in 1+I$. Again, we apply Lemma~\ref{lem: sumlet}. For $u,v\in R^\times$ satisfying $u+v=1$, we compute as follows:
\begin{align*}
\sum_{r\in I} \chi(u-r)\: \eta(v+r)&=\sum_{r\in I} \chi(u-ur)\: \eta(1-u+ur)\\
&=\chi(u) \sum_{z\in 1+I} \chi(z)\: \eta(1-uz)=\chi(u) \sum_{z\in 1+I}  \eta(z^{-1}-u)\\
&=\chi(u) \sum_{z\in 1+I}  \eta(z-u)=\chi(u) \sum_{r\in I}  \eta(v+r)\\
&=\chi(u)\:\eta(v) \sum_{r\in I}  \eta(1+r)=0.
\end{align*}
By way of explanation, let us simply point out that nearly every step is a change of variable. In the last step, we use the primitivity of $\eta$.

(iii) We expand
\begin{align*}
G(\chi)\: G(\eta)&= \sum_{u,v\in R^\times} \psi(u+v)\: \chi(u)\: \eta(v)\\
&=\sum_{a\in R} \psi(a) \sum_{\substack{u,v\in R^\times \\ u+v=a}} \chi(u)\: \eta(v)=\sum_{a\in R} \psi(a)\: J_a(\chi,\eta).\end{align*}
For $a\in R^\times$, a change of variable yields $J_a(\chi,\eta)=(\chi\eta)(a)\: J(\chi,\eta)$. On the other hand, we claim that $J_a(\chi,\eta)=0$ whenever $a\in M$. The desired formula will then follow:
\[G(\chi)\: G(\eta)=\sum_{a\in R^\times } \psi(a)\: (\chi\eta)(a)\: J(\chi,\eta)= J(\chi,\eta)\: G(\chi\eta).\]
The claim is clearly true when $a=0$:
\begin{align*}
J_0(\chi,\eta)=\sum_{u\in R^\times} \chi(u)\:\eta(-u)=\eta(-1) \sum_{u\in R^\times} (\chi\eta)(u)=0.
\end{align*}
Assume now that $a\neq 0$, and consider the proper ideal $I=(a)^\perp$. We apply Lemma~\ref{lem: sumlet}. Fix $u,v\in R^\times$ satisfying $u+v=a$; then $ur+vr=ar=0$ whenever $r\in I$. We compute
\begin{align*}
\sum_{r\in I} \chi(u-r)\: \eta(v+r)&=\sum_{r\in I} \chi(u-vr)\: \eta(v+vr)=\sum_{r\in I} \chi(u+ur)\: \eta(v+vr)\\
&=\chi(u)\: \eta(v) \sum_{r\in I} (\chi \eta)(1+r)=0,
\end{align*}
since $\chi\eta$ is primitive. This completes the proof of the claim.
\end{proof}

\section{Finite Frobenius rings}\label{sec: Frob} 
An existence issue laid dormant in Sections~\ref{sec: addchar} and ~\ref{sec: multichar}, namely, for which finite rings do primitive characters, additive or multiplicative, exist? The character sum considerations in Sections~\ref{sec: kloos} and ~\ref{sec: GJ} give much weight to this existence issue. We now discuss in some detail the relevant class of finite rings.

\subsection{Frobenius rings} Originally introduced by Nakayama \cite{Nak}, Frobenius rings are firmly established in the ring-theoretic canon \cite[Chapter 6]{Lam}. They form a class of good rings for a number of reasons, one being a very satisfactory duality theory for modules. In the present context, finite commutative rings, the following can be taken as definition: a ring $R$ is said to be \emph{Frobenius} if $R$ and $\widehat{R}$ are isomorphic as $R$-modules. It is, of course, generically true that $R$ and $\widehat{R}$ are isomorphic as abelian groups.

For our purposes, the character-theoretic perspective prevails. We will mainly use the following equivalent way of describing  Frobenius rings:

\begin{lem}\label{lem: frobprim}
A ring $R$ is Frobenius if and only if $R$ admits a primitive character.
\end{lem}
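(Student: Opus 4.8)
The plan is to prove both directions of the equivalence by exploiting the $R$-module structure on $\widehat{R}$ induced by the scaling action, which the paper has already emphasized. Recall that for a character $\psi$, the scaling $a.\psi$ is defined by $r \mapsto \psi(ar)$, and that the map $h_\psi \colon R \to \widehat{R}$, $a \mapsto a.\psi$, is an $R$-module homomorphism.

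\textbf{The easy direction} ($R$ Frobenius $\Rightarrow$ primitive character exists). Suppose $R$ and $\widehat{R}$ are isomorphic as $R$-modules, via some isomorphism $\Phi \colon R \isomto \widehat{R}$. Set $\psi = \Phi(1)$. Since $\Phi$ is an $R$-module map, $\Phi(a) = \Phi(a \cdot 1) = a.\Phi(1) = a.\psi$ for every $a \in R$; thus $\Phi = h_\psi$. As $\Phi$ is injective, so is $h_\psi$. But the proof of Lemma~\ref{lem: scale}(i) records precisely that injectivity of $h_\psi$ is equivalent to $\psi$ being primitive. Hence $\psi$ is a primitive character, establishing this implication.

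\textbf{The converse} (primitive character exists $\Rightarrow$ $R$ Frobenius). If $\psi$ is a primitive character, then by the discussion following Lemma~\ref{lem: scale}, the induced map $h_\psi \colon R \to \widehat{R}$ is an $R$-module homomorphism which is injective (by primitivity). Since $|R| = |\widehat{R}|$, injectivity forces $h_\psi$ to be a bijection, and a bijective $R$-module homomorphism is an $R$-module isomorphism. Therefore $R \simeq \widehat{R}$ as $R$-modules, so $R$ is Frobenius.

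The argument is essentially a repackaging of Lemma~\ref{lem: scale}(i) together with the observation, already made explicitly in the paragraph after its proof, that $h_\psi$ is $R$-linear; the only genuine content is recognizing that \emph{any} $R$-module isomorphism $R \isomto \widehat{R}$ is automatically of the form $h_\psi$ for $\psi$ its image of $1$, since the domain $R$ is generated as an $R$-module by the single element $1$. I do not anticipate a serious obstacle: the main point to state carefully is this cyclicity of $R$ over itself, which pins down every module map out of $R$ by its value at $1$ and thereby forces it to be a scaling map.
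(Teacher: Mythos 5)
Your proof is correct and takes essentially the same route as the paper's: both arguments rest on the fact that every $R$-module homomorphism $R\to\widehat{R}$ is of the form $h_\psi$ with $\psi$ the image of $1$ (cyclicity of $R$ over itself), combined with the equivalence from Lemma~\ref{lem: scale}(i) between injectivity of $h_\psi$ and primitivity of $\psi$, and the cardinality count $|R|=|\widehat{R}|$. You merely spell out the cyclicity step that the paper's terse proof leaves implicit.
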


\begin{proof}
This is a further elaboration of the proof of Lemma~\ref{lem: scale} and the subsequent module-theoretic comments. Any $R$-module homomorphism $R\to\widehat{R}$ is of the form $h_\psi$ for some character $\psi$. The $R$-module homomorphism $h_\psi: R\to\widehat{R}$ is an isomorphism if and only if it is injective, which in turn is equivalent to $\psi$ being primitive.
\end{proof}

In view of the last part of Lemma~\ref{lem: condprod}, we deduce:

\begin{cor}
A product ring $R=R_1\times\dots \times R_n$ is Frobenius if and only if each factor $R_k$ is Frobenius.
\end{cor}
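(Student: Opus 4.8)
The plan is to exploit the product factorization of the dual together with the characterization of Frobenius rings by the existence of a primitive character (Lemma~\ref{lem: frobprim}). The key structural input is the statement, recorded just before the product discussion, that a character $\psi$ on $R=R_1\times\dots\times R_n$ decomposes uniquely as $\psi=\psi_1\times\dots\times\psi_n$, and conversely any such tuple assembles to a character of $R$. This gives a bijection $\widehat{R_1}\times\dots\times\widehat{R_n}\simeq\widehat{R}$. Combined with Lemma~\ref{lem: condprod}, which says that the conductor of $\psi_1\times\dots\times\psi_n$ is the product $I_1\times\dots\times I_n$ of the individual conductors, I have all the ingredients to translate primitivity between $R$ and its factors.

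First I would invoke Lemma~\ref{lem: frobprim} to replace the Frobenius condition, on both sides of the claimed equivalence, by the existence of a primitive character. So the corollary reduces to the assertion: $R$ admits a primitive character if and only if each $R_k$ admits a primitive character. For the ``if'' direction, suppose each $R_k$ carries a primitive character $\psi_k$; then by Lemma~\ref{lem: condprod} the product character $\psi_1\times\dots\times\psi_n$ has conductor $(0)\times\dots\times(0)=(0)$, hence is primitive, so $R$ is Frobenius. For the ``only if'' direction, suppose $\psi$ is a primitive character of $R$. Write $\psi=\psi_1\times\dots\times\psi_n$ via the product decomposition. By Lemma~\ref{lem: condprod} the conductor of $\psi$ is the product of the conductors of the $\psi_k$, and this product is the zero ideal exactly when each factor conductor is the zero ideal; since $\psi$ is primitive, each $\psi_k$ is therefore primitive, so each $R_k$ is Frobenius.

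I do not anticipate a genuine obstacle here: the statement is explicitly flagged in the text as a formal consequence of Lemma~\ref{lem: condprod}, and the argument is essentially a restatement of the fact that a product of zero ideals is zero iff each factor is zero. The only point demanding a word of care is the implicit identification of the zero ideal of $R$ with the product $(0)\times\dots\times(0)$ of the zero ideals of the factors, so that ``conductor equals $(0)$'' genuinely corresponds factorwise to ``each conductor equals $(0)$''; this is immediate from the ring decomposition but is worth stating so that Lemma~\ref{lem: condprod} can be applied cleanly. Everything else is bookkeeping through the dictionary $\psi\leftrightarrow(\psi_1,\dots,\psi_n)$.
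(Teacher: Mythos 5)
Your proposal is correct and matches the paper's argument: the paper likewise deduces the corollary from Lemma~\ref{lem: frobprim} (Frobenius $\Leftrightarrow$ existence of a primitive character) together with the last part of Lemma~\ref{lem: condprod} and the product decomposition of characters. Your write-up merely makes explicit the two directions that the paper leaves as an immediate consequence.
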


The next lemma provides two very useful properties of annihilators in Frobenius rings.

\begin{lem}\label{lem: Fann}
Let $R$ be a Frobenius ring, and let $I$ be an ideal of $R$. Then $|I||I^\perp|=|R|$, and $(I^\perp)^\perp=I$.
\end{lem}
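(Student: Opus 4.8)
The plan is to use the primitive character $\psi$ guaranteed by Lemma~\ref{lem: frobprim} together with the summation formula of Lemma~\ref{lem: primsum}, which is precisely the tool that converts annihilator conditions into character sums. The Frobenius hypothesis enters only through the existence of $\psi$; once we have it, both claims become clean orthogonality computations.

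For the first claim, $|I|\,|I^\perp|=|R|$, the idea is to count the set $\{(a,c)\in I\times R\}$ weighted by $\psi(ca)$ in two ways, or more directly to evaluate $\sum_{c\in R}\sum_{a\in I}\psi(ca)$. Summing Lemma~\ref{lem: primsum} over all $c\in R$ gives $\sum_{c\in R}|I|\,[c\in I^\perp]=|I|\,|I^\perp|$. On the other hand, swapping the order of summation and using additive orthogonality on the full group $R$, the inner sum $\sum_{c\in R}\psi(ca)$ equals $|R|\,[a=0]$ (since $\psi$ is primitive, $a\mapsto a.\psi$ is injective by Lemma~\ref{lem: scale}(i), so $a.\psi=\ct$ only when $a=0$), and summing over $a\in I$ picks out just $a=0$, yielding $|R|$. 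Equating the two evaluations gives $|I|\,|I^\perp|=|R|$.

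For the second claim, $(I^\perp)^\perp=I$, the inclusion $I\subseteq (I^\perp)^\perp$ is immediate from the definition of annihilator: every $a\in I$ kills every element of $I^\perp$ by symmetry of the defining condition $ra=0$. For the reverse inclusion it suffices, since both are finite sets, to compare cardinalities. Applying the first claim twice gives $|(I^\perp)^\perp|=|R|/|I^\perp|=|R|/(|R|/|I|)=|I|$. Combined with $I\subseteq (I^\perp)^\perp$ and $|(I^\perp)^\perp|=|I|$, we conclude $(I^\perp)^\perp=I$.

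The main obstacle, such as it is, lies in the first claim rather than the second: one must be careful that the double sum is evaluated correctly in both orders, in particular that the ``easy'' direction uses orthogonality over the \emph{full} group $R$ (giving $|R|\,[a=0]$) while the ``hard'' direction uses the primitive-character formula of Lemma~\ref{lem: primsum} over the \emph{ideal} $I$. The whole argument is essentially a Fourier duality / Parseval-type bookkeeping, and the only genuine input beyond formal orthogonality is the primitivity of $\psi$, which is exactly what the Frobenius hypothesis supplies. Once $|I|\,|I^\perp|=|R|$ is in hand, the equality $(I^\perp)^\perp=I$ is a formal cardinality count.
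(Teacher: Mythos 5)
Your proof is correct, and it reaches the first identity by a genuinely different route than the paper. The paper proves $|I||I^\perp|=|R|$ structurally: it shows that the bijection $h_\psi\colon a\mapsto a.\psi$ restricts to a bijection between $I^\perp$ and the subgroup $\mathrm{triv}(I)$ of characters trivial on $I$, and then invokes the correspondence between $\mathrm{triv}(I)$ and the dual of the quotient ring $R/I$ to conclude $|I^\perp|=|R|/|I|$. You instead evaluate the double sum $\sum_{c\in R}\sum_{a\in I}\psi(ca)$ in two orders: Lemma~\ref{lem: primsum} in one order gives $|I|\,|I^\perp|$, while full-group orthogonality in the other order gives $|R|$, the only nontrivial input being that primitivity forces $a.\psi=\ct$ only for $a=0$. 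Your route is more self-contained: it needs nothing beyond Lemma~\ref{lem: primsum} and basic orthogonality, avoids the ``general fact'' that characters of $R$ trivial on $I$ correspond to characters of $R/I$, and handles the case $I=R$ uniformly, whereas the paper sets that case aside at the outset. What the paper's argument buys is conceptual content: the restricted bijection between $I^\perp$ and $\mathrm{triv}(I)$ is precisely the module-duality picture of $\widehat{R}$ as an $R$-module that motivates the Frobenius definition, and that identification is useful in its own right. Your treatment of the second claim, $(I^\perp)^\perp=I$ --- the generic inclusion plus the cardinality count obtained by applying the first claim twice --- is identical to the paper's.
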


\begin{proof} The statements are clearly true when $I=R$, so let us assume that $I\neq R$ in what follows. Let $\psi$ be a primitive character of $R$. The bijection $h_\psi:R\to \widehat{R}$, given by $a\mapsto a.\psi$, maps $I^\perp$ to $\mathrm{triv}(I)=\{\phi\in \widehat{R}: \phi=\ct \textrm{ on } I\}$, the subgroup of characters that are trivial on $I$.  Furthermore, if $h_\psi(a)=a.\psi\in \mathrm{triv}(I)$ for some $a\in R$, then $\psi$ is trivial on the ideal $aI$, hence $aI=(0)$ by the primitivity of $\psi$; thus $a\in I^\perp$. In summary, $h_\psi$ restricts to a bijection between $I^\perp\subseteq R$ and $\mathrm{triv}(I)\subseteq \widehat{R}$. On the other hand, it is a general fact that $\mathrm{triv}(I)$ is in correspondence with the additive dual of the quotient ring $R/I$. Therefore $|I^\perp|=|\mathrm{triv}(I)|=|R|/|I|$, as desired. 

For the second fact, we note first the generic inclusion $I\subseteq (I^\perp)^\perp$. Under the Frobenius hypothesis we also have $|I||I^\perp|=|R|=|I^\perp||(I^\perp)^\perp|$, whence $|I|=|(I^\perp)^\perp|$. We conclude that $I=(I^\perp)^\perp$.
\end{proof}

\subsection{Local Frobenius rings} Deciding whether a ring is Frobenius reduces to deciding whether the local factors of the ring are Frobenius. For local rings, the following ideal-theoretic criteria hold:

\begin{lem}\label{lem: Floc}
Let $R$ be a local ring, with maximal ideal $M$. Then the following are equivalent:
\begin{itemize}
\item[(i)] $R$ is Frobenius;
\item[(ii)] $R$ has a unique minimal ideal;
\item[(iii)] $M^\perp$ is a minimal ideal;
\item[(iv)] $M^\perp$ is a principal ideal.
 \end{itemize}
\end{lem}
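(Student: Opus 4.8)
The plan is to prove the cyclic chain of implications
\[
\textsc{(i)}\Rightarrow\textsc{(ii)}\Rightarrow\textsc{(iii)}\Rightarrow\textsc{(iv)}\Rightarrow\textsc{(i)},
\]
leaning on Lemma~\ref{lem: Fann}, which already gives us the annihilator duality $|I||I^\perp|=|R|$ and $(I^\perp)^\perp=I$ in the Frobenius case. The orthogonality between $\textsc{(ii)}$ and $\textsc{(iii)}$ is the conceptual heart: I expect a largest/smallest duality, whereby $M$ being the unique \emph{maximal} proper ideal of the local ring $R$ is dual, via the operation $I\mapsto I^\perp$, to $M^\perp$ being the unique \emph{minimal} nonzero ideal.

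First I would record the order-reversing nature of annihilation: if $I\subseteq J$ then $J^\perp\subseteq I^\perp$, which is immediate from the definition. For $\textsc{(i)}\Rightarrow\textsc{(iii)}$, I would argue that a minimal nonzero ideal of a local ring $R$ is exactly the annihilator of a maximal proper ideal. Concretely, every nonzero ideal $N$ satisfies $N^\perp\subseteq M$ (since $N^\perp$ is proper, being killed nontrivially, and $M$ is the unique maximal ideal), so by applying $\perp$ and using $(N^\perp)^\perp=N$ from Lemma~\ref{lem: Fann} we get $M^\perp\subseteq N$; thus $M^\perp$ sits inside every nonzero ideal and is itself nonzero (as $|M^\perp|=|R|/|M|>1$). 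That simultaneously shows $M^\perp$ is the unique minimal ideal, giving both $\textsc{(iii)}$ and $\textsc{(ii)}$. The implications $\textsc{(ii)}\Rightarrow\textsc{(iii)}$ and $\textsc{(iii)}\Rightarrow\textsc{(ii)}$ can then be stitched together once I know $M^\perp$ is contained in every nonzero ideal, which follows more generally (without Frobenius) from the local structure: the socle, i.e. the sum of all minimal ideals, equals $M^\perp$ in a local ring. The step $\textsc{(iii)}\Rightarrow\textsc{(iv)}$ is soft, since a minimal ideal of a local ring is one-dimensional over the residue field $R/M$ and hence principal.

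The only genuinely substantive implication is $\textsc{(iv)}\Rightarrow\textsc{(i)}$, the converse direction that constructs a primitive character out of a principal annihilator $M^\perp=(t)$. The strategy I would pursue is to exhibit a character $\psi$ with trivial conductor. Choosing a character $\psi$ whose kernel does not contain the minimal ideal $(t)$ should force the conductor to be zero: since every nonzero ideal contains the socle, and in a local Frobenius-candidate ring with $M^\perp$ principal the socle is simple, a character that is nontrivial on the unique minimal ideal cannot be trivial on any nonzero ideal, hence is primitive. The hypothesis that $M^\perp=(t)$ is principal (equivalently, that the socle is simple) is what guarantees such a $\psi$ exists. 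This is the place where I expect the main obstacle: verifying that principality of $M^\perp$ is equivalent to the socle being a \emph{simple} $R/M$-module, and then pinning down that a character nonvanishing on a simple socle is automatically primitive. I would handle this by a dimension count over the residue field, showing $\dim_{R/M}(\mathrm{soc}\,R)=\dim_{R/M}(M^\perp/M\cdot M^\perp)$, so that $M^\perp$ principal forces the socle to be one-dimensional; then Lemma~\ref{lem: frobprim} converts the existence of a primitive character back into the Frobenius property, closing the cycle.

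Throughout, the technical lever is Lemma~\ref{lem: Fann} together with the correspondence between characters of $R$ trivial on $I$ and characters of $R/I$; the rest is the interplay between the unique maximal ideal $M$ and its annihilator. I anticipate that the forward implications are essentially bookkeeping with $\perp$, while $\textsc{(iv)}\Rightarrow\textsc{(i)}$ requires the one real idea, namely reading off a primitive character from the simplicity of the socle.
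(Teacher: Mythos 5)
Your overall strategy is sound and, in essence, the same as the paper's: the implications out of (i) ride on the annihilator duality of Lemma~\ref{lem: Fann}, the passage from the ideal-theoretic conditions back to (i) rides on producing a primitive character (Lemma~\ref{lem: frobprim}), and the pivot in both directions is the observation that an additive character is primitive exactly when it is nontrivial on the unique minimal ideal. The differences are organizational. The paper proves (i)$\Rightarrow$(iii)$\Rightarrow$(ii)$\Rightarrow$(i) together with (iii)$\Leftrightarrow$(iv); it returns to (i) by \emph{counting} (the non-primitive characters are exactly those trivial on the unique minimal ideal $I$, and there are only $|R/I|<|R|$ of them), and it proves (iv)$\Rightarrow$(iii) via the explicit description $M^\perp=(a)=\{0\}\cup\{ua: u\in R^\times\}$. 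You instead merge (iv)$\Rightarrow$(i) into a single step (a dimension count over $R/M$ to show the socle is simple, then a character chosen nontrivial on the socle), and your (i)$\Rightarrow$(ii),(iii) argument --- $N^\perp\subseteq M$ for $N\neq(0)$, hence $M^\perp\subseteq(N^\perp)^\perp=N$ by Lemma~\ref{lem: Fann} --- is a clean alternative to the paper's route through $I^\perp=M$ for $I$ minimal. These are minor variants of the same ideas.

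One claim, as you state it, is false and needs rewording: ``$M^\perp$ is contained in every nonzero ideal \dots follows more generally (without Frobenius).'' It does not. In $R=\F_p[x,y]/(x^2,xy,y^2)$ one has $M^\perp=M$, which is not contained in the nonzero ideal $(x)$; in fact, for a finite local ring (where $M^\perp\neq 0$ automatically), the property ``every nonzero ideal contains $M^\perp$'' is precisely the statement that $M^\perp$ is the unique minimal ideal, i.e.\ conditions (ii) and (iii) --- so asserting it unconditionally assumes what is to be proven. What is true without Frobenius is the other fact you cite: $\mathrm{soc}(R)=M^\perp$ in a finite local ring (every minimal ideal is killed by $M$, and conversely $M^\perp$ is an $R/M$-vector space, hence a sum of minimal ideals). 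That identity alone does give (ii)$\Leftrightarrow$(iii): under (ii) the socle equals the unique minimal ideal, so $M^\perp$ is minimal; under (iii) every minimal ideal lies in $\mathrm{soc}(R)=M^\perp$ and hence equals it. Likewise, in your (iv)$\Rightarrow$(i) step the containment ``every nonzero ideal contains the socle'' is legitimate only \emph{after} you have established that the socle is simple, which your dimension count does. So your proof survives, but only with the corrected logic: the general fact is the socle identity, and ``$M^\perp$ lies in every nonzero ideal'' is a consequence of simplicity of the socle, never a standing hypothesis.
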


The proof of the lemma relies, in part, on a property of $M^\perp$ that holds in any local ring $R$. The fact is that $M^\perp$ acts as a bottleneck for minimal ideals.

\begin{lem}
Let $R$ be a local ring with maximal ideal $M$. Then each minimal ideal of $R$ is contained in $M^\perp$. In particular, the ideal $M^\perp$ is non-zero.
\end{lem}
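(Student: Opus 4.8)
The plan is to show that for any minimal ideal $N$ of $R$, the product $MN$ vanishes, which is exactly the assertion $N\subseteq M^\perp$. First I would observe that $MN$ is an ideal of $R$ and that $MN\subseteq N$, since $N$ is an ideal. Minimality of $N$ then forces a dichotomy: either $MN=(0)$ or $MN=N$.

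The crux is to rule out the second alternative. Here I would invoke the fact that, $R$ being a finite local ring, its maximal ideal $M$ is nilpotent; say $M^k=(0)$ for some $k\geq 1$. (This is a structural feature of Artinian local rings: the descending chain $M\supseteq M^2\supseteq\cdots$ of ideals stabilizes, and stability $M^{j+1}=M^j$ at a nonzero ideal would contradict Nakayama's lemma applied to the finitely generated module $M^j$.) If $MN=N$ held, then iterating would give $N=MN=M^2N=\cdots=M^kN=(0)$, contradicting $N\neq(0)$. Alternatively, one applies Nakayama's lemma directly to the finite, hence finitely generated, $R$-module $N$: as $M$ is the Jacobson radical of the local ring $R$, the equality $MN=N$ forces $N=(0)$. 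Either way, we are left with $MN=(0)$, i.e. $N\subseteq M^\perp$.

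For the final assertion, I would first use finiteness to produce a minimal ideal at all: the set of nonzero ideals of $R$ is finite and nonempty (it contains $R$ itself, since $R$ is not the zero ring), so it has a minimal element, which is a minimal ideal $N\neq(0)$. By the first part, $N\subseteq M^\perp$, whence $M^\perp\neq(0)$.

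The only non-formal ingredient, and hence the main obstacle, is the exclusion of the case $MN=N$; everything else is bookkeeping with ideals and minimality. Once one accepts the nilpotency of $M$ in a finite local ring (equivalently, Nakayama's lemma), the argument is immediate.
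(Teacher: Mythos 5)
Your proof is correct, and it has the same skeleton as the paper's: minimality of the ideal forces a dichotomy, and one must rule out the ``stable'' alternative. The difference is in how that alternative is killed. You work globally with the ideal $MN$, get $MN=(0)$ or $MN=N$, and exclude $MN=N$ by citing Nakayama's lemma (equivalently, nilpotency of $M$ in a finite local ring). The paper instead works elementwise: it writes the minimal ideal as a principal ideal $(r)$, notes that for $a\in M$ either $(ra)=(0)$ or $(ra)=(r)$, and in the latter case gets $r=rab$, hence $r(1-ab)=0$ with $1-ab\in 1+M$ a unit, so $r=0$, a contradiction. That one-line computation is exactly the cyclic-module case of Nakayama, so the two arguments differ only in granularity: the paper's version is self-contained and elementary (in keeping with its stated aim of including complete proofs of the Frobenius preliminaries), while yours trades the explicit unit trick for a standard black box, at the cost of invoking machinery the paper never needs. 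One small point in your favor: for the ``in particular'' claim you explicitly justify the existence of a minimal ideal from finiteness, a step the paper leaves implicit.
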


\begin{proof} 
A minimal ideal must be principal and, by definition, non-zero. So let $(r)$ be a minimal ideal of $R$. For each $a\in M$, we have $(ra)\subseteq (r)$, so either $(ra)=(0)$ or $(ra)=(r)$ by minimality. If the latter holds, then $r=rab$ for some $b\in R$. But then $r(1-ab)=0$ and $1-ab\in 1+M$ is a unit, so $r=0$, a contradiction. Therefore the former holds: $ra=0$. This means that $ r\in M^\perp$, as desired.
\end{proof}

\begin{proof}[Proof of Lemma~\ref{lem: Floc}] (i)$\: \Rightarrow\: $(iii) Let $I$ be any minimal ideal of $R$. Then $I\subseteq M^\perp$, equivalently, $M\subseteq I^\perp$. Moreover, $I^\perp \neq R$, as $I\neq (0)$, so we deduce that $I^\perp=M$. Now $R$ is Frobenius, by hypothesis, so it follows that $I=M^\perp$ thanks to the double annihilator property of Lemma~\ref{lem: Fann}. Therefore $M^\perp$ is minimal. 

(iii)$\:\Rightarrow\:$(ii) Any minimal ideal is contained in $M^\perp$. So if $M^\perp$ is minimal, then it is the unique minimal ideal of $R$. 

(ii)$\:\Rightarrow\:$(i) Assume $R$ has a unique minimal ideal $I$. The non-primitive characters of $R$ are precisely those which are trivial on $I$. So there are $|\widehat{R/I}|=|R/I|<|R|$ non-primitive characters of $R$. It follows that $R$ admits primitive characters. 

(iii)$\:\Rightarrow\:$(iv) Any minimal ideal is principal. 

(iv)$\:\Rightarrow\:$(iii) Let $M^\perp=(a)$. Then $M^\perp=\{ra: r\in R\}=\{0\}\cup \{ua: u\in R^\times\}$, as $a$ annihilates each element of $M$. If $I$ is non-zero ideal contained in $M^\perp$, then $I$ must contain a unit multiple of $a$ and so $I=M^\perp$. Therefore $M^\perp$ is a minimal ideal.
\end{proof}

The unique minimal ideal $M^\perp$ of a local Frobenius ring $R$ will often show up in what follows. Note that, typically, we have the inclusion $M^\perp\subseteq M$. In the degenerate case when $R$ is a field, the inclusion is reversed.

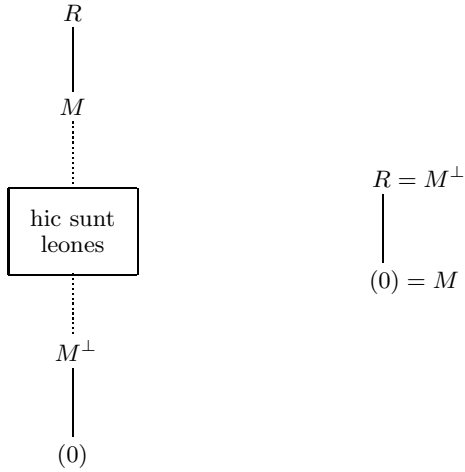
\begin{figure*}[!h]
\begin{align*}
\begin{minipage}[b]{0.48\linewidth}
\xymatrix{
&R &\\
&M \ar@{-}[u] &\\
  & *+++[F]{\txt{hic sunt \\%
leones}}\ar@{.}[u]&\\
& \; M^\perp \ar@{.}[u]&\\
&(0) \ar@{-}[u] &\\
}
\end{minipage}
\begin{minipage}[b]{0.48\linewidth}
\xymatrix{
&&\\
&&\\
&\qquad \:  \enskip\:  R =M^\perp &\\
&\qquad \enskip (0)=M \ar@{-}[u] &\\
}
\end{minipage}
\end{align*}\caption{The maximal ideal and its annihilator in a local Frobenius ring: a non-field (left) versus a field (right).}
\end{figure*}

For local rings, Lemma~\ref{lem: frobprim} has a multiplicative counterpart.

\begin{lem}\label{lem: Fmult}
Let $R$ be a local ring. Then $R$ is Frobenius if and only if $R$ admits a primitive multiplicative character.
\end{lem}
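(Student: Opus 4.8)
The plan is to reduce the claim, via the minimal-ideal criterion of Lemma~\ref{lem: Floc}, to a short linear-algebra computation on the annihilator $M^\perp$. If $R$ is a field the statement is immediate: $R$ is then Frobenius, and any non-trivial multiplicative character is automatically primitive, since $(0)$ is the only ideal contained in $M=(0)$. (The lone degenerate exception, $R=\F_2$, has trivial unit group; I would dispatch it, together with the field case, in a preliminary sentence.) So assume henceforth that $R$ is local but not a field, with residue field $k=R/M$.

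First I would record the structure of $M^\perp$. By definition $M\cdot M^\perp=(0)$, so $M^\perp$ is annihilated by $M$ and is thus a vector space over $k$, of some dimension $d\geq 1$. As $R$ is not a field, $M^\perp$ contains no unit, whence $M^\perp\subseteq M$ and $M^\perp\cdot M^\perp=(0)$. Consequently $1+M^\perp$ is a subgroup of $R^\times$ and $a\mapsto 1+a$ is a group isomorphism $(M^\perp,+)\isomto 1+M^\perp$. Using that every minimal ideal lies in $M^\perp$ (established earlier), one sees that the minimal ideals of $R$ are exactly the $k$-lines $kv$ with $0\neq v\in M^\perp$, each matched with a subgroup $1+kv$ of $1+M^\perp$. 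By Lemma~\ref{lem: Floc}, $R$ is Frobenius precisely when $M^\perp$ is minimal, that is, when $d=1$.

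Next I would translate primitivity into a statement about these lines. A multiplicative character $\chi$ has conductor $(0)$ if and only if it is non-trivial on $1+J$ for every minimal ideal $J$: the conductor is the largest ideal $I\subseteq M$ with $\chi$ trivial on $1+I$, and every nonzero ideal contained in $M$ contains a minimal ideal. Restricting $\chi$ to $1+M^\perp\cong(M^\perp,+)$, primitivity therefore means precisely that the induced additive character of $M^\perp$ is non-trivial on every line. Since a character of a subgroup of a finite abelian group always extends, restriction $\widehat{R^\times}\to\widehat{1+M^\perp}$ is surjective; so a primitive multiplicative character of $R$ exists if and only if $(M^\perp,+)$ admits an additive character that is non-trivial on every line.

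The heart of the matter is this last existence question, which is pure linear algebra. Fixing a primitive additive character $\lambda$ of $k$, every character of $(M^\perp,+)\cong k^d$ has the form $x\mapsto\lambda(\langle c,x\rangle)$ for a unique $c\in k^d$, where $\langle\,\cdot\,,\,\cdot\,\rangle$ is the standard $k$-bilinear pairing; such a character is non-trivial on the line $kv$ exactly when $\langle c,v\rangle\neq 0$. Hence it is non-trivial on every line if and only if the linear functional $v\mapsto\langle c,v\rangle$ vanishes at no nonzero vector, which happens if and only if $d=1$ (for $d\geq 2$ any such functional has nonzero kernel). Chaining the equivalences yields $d=1\iff$ a primitive multiplicative character exists $\iff R$ is Frobenius. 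I expect the main obstacle to be organizational rather than computational: setting up cleanly the dictionary between minimal ideals and lines of $M^\perp$---including the isomorphism $1+M^\perp\cong(M^\perp,+)$ and the extension of characters---so that everything funnels into the elementary observation that a single linear functional on $k^d$ avoids all nonzero vectors only when $d=1$.
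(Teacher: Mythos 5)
Your proof is correct, but it takes a genuinely different route from the paper's. The paper argues the two implications separately. For the direction (Frobenius $\Rightarrow$ existence) it counts: when $M^\perp$ is the unique minimal ideal, the non-primitive multiplicative characters are exactly those trivial on $1+M^\perp$, and there are only $|R^\times|/|M^\perp|$ of them, so primitive ones are left over. For (existence $\Rightarrow$ Frobenius) it takes a primitive $\chi$ and, for $a\in M^\perp$, forms the additive characters $\phi_a(r)=\chi(1+ar)$ of $R$; these are trivial on $M$, hence descend to the residue field, and since any two non-trivial additive characters of a finite field are scalings of one another, any two non-zero elements of $M^\perp$ are unit multiples of each other; thus $M^\perp$ is principal, and Lemma~\ref{lem: Floc}(iv) finishes. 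You instead run a single biconditional chain through the $k$-vector space structure of $M^\perp$, where $k=R/M$ and $d=\dim_k M^\perp$: minimal ideals are the $k$-lines of $M^\perp$; via the isomorphism $(M^\perp,+)\simeq 1+M^\perp$ and surjectivity of the restriction $\widehat{R^\times}\to \widehat{1+M^\perp}$, a primitive multiplicative character exists if and only if some additive character of $k^d$ is non-trivial on every line; and this holds if and only if $d=1$, which by Lemma~\ref{lem: Floc}(iii) is exactly the Frobenius condition. Both arguments hinge on the same key observation, that $x\mapsto \chi(1+x)$ is additive on $M^\perp$ because $(M^\perp)^2=(0)$; what yours buys is a unified and more conceptual explanation (primitivity is the non-vanishing of a single linear functional on all of $k^d\setminus\{0\}$, possible only in dimension one), at the modest cost of invoking the extension of characters from subgroups of finite abelian groups, which the paper's two arguments avoid. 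One caveat, which you share with the paper but at least flag explicitly: under the stated convention that the trivial multiplicative character has conductor $R$, the field $\F_2$ is Frobenius yet admits no primitive multiplicative character, so the field case is not quite immediate; genuinely dispatching $\F_2$ requires adjusting that convention for fields (reading the conductor of $\ct$ as $(0)$ there), not merely noting that $\F_2^\times$ is trivial.
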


\begin{proof} Let us focus on the non-trivial case when $R$ is not a field.

Assume $R$ is Frobenius. As in Lemma~\ref{lem: Floc} above, we argue existence by counting. As $M^\perp$ is the unique minimal ideal, a multiplicative character $\chi$ is not primitive precisely when $\chi$ is trivial on $1+M^\perp$. So there are $|R^\times/(1+M^\perp)|=|R^\times|/|M^\perp|$ non-primitive multiplicative characters, leaving $|R^\times|-|R^\times|/|M^\perp|>0$ primitive ones.

Conversely, assume $R$ admits a primitive multiplicative character $\chi$. We argue that the ideal $M^\perp$ is principal; Lemma~\ref{lem: Floc} will then imply that $R$ is Frobenius. It suffices to show the following: if $a,b\in M^\perp$ are non-zero, then $b=ua$ for some $u\in R^\times$.

For $a\in M^\perp$, put $\phi_a(r)=\chi(1+ar)$ for $r\in R$. Using the fact that $a^2=0$, we see that $\phi_a$ is an additive character: 
\[\phi_a(r_1)\phi_a(r_2)=\chi((1+ar_1)(1+ar_2))=\chi(1+a(r_1+r_2))=\phi_a(r_1+r_2).\] 
Clearly, $\phi_a$ is trivial on $M$. It is easily seen that $\phi_a$ is trivial if and only if $a=0$, thanks to $\chi$ being primitive. Thus, for $a\neq 0$ we may view $\phi_a$ as the lift of a non-trivial additive character $\phi'_a$ of the residue field $R/M$. Now, on $R/M$, any two non-trivial additive characters are equivalent via scaling. So, if $a,b\in M^\perp$ are non-zero, then $\phi'_b=v.\phi'_a$ for some $v\in (R/M)^\times$. This lifts to $\phi_b=u.\phi_a$ for some $u\in R^\times$, that is, $\phi_{b-ua}=\ct$. It follows that $b=ua$, as desired.
\end{proof}

\subsection{Some examples} Finite principal rings are Frobenius. Indeed, they are products of local principal rings, and these are Frobenius by a direct application of Lemma~\ref{lem: Floc} above. Examples of finite principal rings include:
\begin{itemize}
\item[$\cdot$] finite fields;
\item[$\cdot$] finite quotients of principal domains; this subclass includes, notably, the modular rings $\Z/(n)$, and their polynomial analogues $\F[X]/(f)$ where $\F$ is a finite field;
\item[$\cdot$] more generally, finite quotients of Dedekind domains, such as quotients of algebraic rings of integers, and their function field analogues.
\end{itemize}

Finite principal rings may be deemed the most important examples of Frobenius rings, as they account for the number-theoretic considerations. On the other hand, finite principal rings are also some of the simplest examples, for their algebraic structure is rather tame  and easy to parameterize. So let us add a more exotic example of a Frobenius ring. It is based on a certain extension construction, which we recall first. Given a finite ring $R$, and an $R$-module $\Lambda$, one constructs a new finite ring $R\ltimes \Lambda$ as follows: the additive structure is that of the product $R\times \Lambda$, while the multiplication is given by $(r,\lambda)(r',\lambda')=(rr',r\lambda'+r'\lambda)$. 

\begin{ex} Let $R$ be a finite ring, and consider the character $R$-module $\widehat{R}$. Then the finite ring $R\ltimes \widehat{R}$ is Frobenius. 

Indeed, we produce a primitive character on $R\ltimes \widehat{R}$. Consider the map $\rho: R\ltimes \widehat{R}\to \C^*$, given by $\rho(r,\psi)=\psi(1)$. Then $\rho$ is a homomorphism, for it is the composition $R\times \widehat{R}\to \widehat{R}\to \C^*$, where the first map is the natural projection, and the second map is evaluation at $1\in R$. In order to see that $\rho$ is primitive, it suffices to argue that the only principal ideal of $R\ltimes \widehat{R}$, on which $\rho$ is trivial, is the zero ideal. So let $(r,\psi)\in R\ltimes \widehat{R}$, and assume that $\rho=\ct$ on the ideal generated by $(r,\psi)$. We wish to obtain $(r,\psi)=(0,\ct)$, the zero element in $R\ltimes \widehat{R}$. For all $r'\in R$ and all $\psi'\in \widehat{R}$, we have
\[1=\rho((r',\psi')(r,\psi))=\rho(rr', (r.\psi')(r'.\psi))=(r.\psi')(1)\cdot(r'.\psi)(1)=\psi'(r)\psi(r').\]
Firstly, take $\psi'=\ct$; then $\psi(r')=1$ for all $r'\in R$, that is $\psi=\ct$. Secondly, take $r'=0$; then $\psi'(r)=1$ for all $\psi'\in \widehat{R}$, therefore $r=0$.
\end{ex}

\begin{notes}
This section is a synopsis of known facts. They are drawn from several sources and, for the sake of cohesion, we have decided to include proofs. Further work on character sums over finite Frobenius rings is likely to find this synopsis useful. 

Much of the above discussion is based on \cite[Ch.6]{Lam} and \cite{Hon}. The `exotic' example is, essentially, \cite[Ex.16.60]{Lam}. See \cite[\S 16.G]{Lam} for an answer to the sensible question `Why Frobenius?'. Lemmas~\ref{lem: Fann} and ~\ref{lem: Floc} go back to \cite{L}, even \cite{Nak} to some extent; compare also \cite[Lems.3.2, 3.3]{Sz}. In \cite[Thm.1]{Hon}, it is furthermore shown that each one of the annihilator properties in Lemma~\ref{lem: Fann} characterizes, in fact, Frobenius rings. The double annihilator property, $(I^\perp)^\perp=I$, in a Frobenius ring is analogous to the double orthogonal property, $W^{\perp\perp}=W$, in a Hilbert space. As such, it expresses an algebraic completeness of Frobenius rings. Lemma~\ref{lem: Fmult} is a result from \cite{W}, though the forward direction already appeared in \cite{L}. 
\end{notes}

\section{The Selberg-Kuznetsov identity}\label{sec: SK}
The Selberg-Kuznetsov identity for classical Kloosterman sums says that
\begin{align}\label{eq: SK}
S(m,n; q)=\sum_{d|m,n,q} d\: S(1,mn/d^2; q/d)
\end{align}
for any triple of positive integers $m$, $n$, $q$. We prove the following generalization.

\begin{thm}\label{thm: gSK}
Let $R$ be a finite Frobenius ring, and let $\psi$ be a primitive character of $R$. Then for all $a,b\in R$ we have
\begin{align}\label{eq: gSK}
K(a.\psi,b.\psi; R)=\sum_{(d)\supseteq (a), (b)} |d^\perp|\: K(d.\psi,(ab/d^2).(d.\psi); R/d^\perp).
\end{align}
\end{thm}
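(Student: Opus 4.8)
The plan is to reduce to the local case and then evaluate the right-hand side of \eqref{eq: gSK} term by term, using the vanishing and Ramanujan computations already in hand. First I would dispose of the reduction. Writing $R=R_1\times\dots\times R_n$ as a product of local rings and $\psi=\psi_1\times\dots\times\psi_n$ with each $\psi_k$ primitive (Lemma~\ref{lem: condprod}), the left side factors by the factorization property, Proposition~\ref{lem: fac}. On the right, the principal ideals of $R$ are exactly the products $(d_1)\times\dots\times(d_n)$ of principal ideals, annihilators and quotients split as $d^\perp=\prod_k d_k^\perp$ and $R/d^\perp=\prod_k R_k/d_k^\perp$, and $ab/d^2$ is computed componentwise; hence the right side factors as a product over $k$ of the corresponding local expressions, and it suffices to prove the identity for $R$ local.

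So assume $R$ is local with maximal ideal $M$. Two setup points make each summand legible. Since $a,b\in(d)$ one has $a\,d^\perp=b\,d^\perp=(0)$, so $ab/d^2$ is a genuinely well-defined element of $R/d^\perp$, namely the class of $a'b'$ for any $a',b'$ with $a=da'$, $b=db'$; and $d.\psi$, which by Lemma~\ref{lem: scale} has conductor $(d)^\perp=d^\perp$ in $R$, descends to a \emph{primitive} character $\psi_d$ of $R/d^\perp$. Thus each summand is $K(\psi_d,(ab/d^2).\psi_d;\,R/d^\perp)$ with $\psi_d$ primitive. The key point is that, because $\psi_d$ is primitive, the vanishing property of Proposition~\ref{prop: van-red}(i) forces this summand to be zero \emph{unless} the twist $(ab/d^2).\psi_d$ is itself either primitive or trivial, i.e. unless $ab/d^2$ is a unit or is zero in $R/d^\perp$ (Lemma~\ref{lem: scale}(ii)).

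I would then analyze these two surviving possibilities. If $ab/d^2$ is a unit, then both $a'$ and $b'$ are units, which means $(d)=(a)=(b)$; this singles out one ideal, and after unit-shifting its contribution is exactly $|d^\perp|$ times a reduced Kloosterman sum over $R/d^\perp$. If $ab/d^2=0$, the summand is a Ramanujan sum $K(\psi_d,\ct;\,R/d^\perp)$, which by Proposition~\ref{prop: rama} is nonzero only when $R/d^\perp$ is a field, i.e. $d^\perp=M$, i.e. $(d)=M^\perp$ by the double-annihilator property of Lemma~\ref{lem: Fann}. To finish I would compute the left side $K(a.\psi,b.\psi)$ the same way: by Proposition~\ref{prop: van-red}(ii) and Proposition~\ref{prop: rama}, for $a,b\neq0$ it vanishes unless $(a)^\perp=(b)^\perp$, which in a Frobenius ring is equivalent to $(a)=(b)$ via $(I^\perp)^\perp=I$ (Lemma~\ref{lem: Fann}); and in that case it equals precisely the surviving unit-case term on the right, since the common conductor $(a)^\perp$ is exactly the relevant $d^\perp$. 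When one of $a,b$ vanishes the left side is itself a Ramanujan sum, matching the surviving zero-case ($(d)=M^\perp$) term on the right, with the minimality of $M^\perp$ ruling out any overlap between the two cases.

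The hard part is exactly this last collapse: showing that the a priori large divisor sum on the right reduces to a single meaningful term, i.e. that every principal ideal $(d)$ other than the one matching the common conductor contributes zero. This rests squarely on the Frobenius hypothesis, both through $(I^\perp)^\perp=I$ (so that $(a)^\perp=(b)^\perp$ forces $(a)=(b)$, aligning the nonzero right-hand term with the left side) and through $M^\perp$ being the unique minimal ideal (so that at most one zero-case term survives). The one genuinely delicate point to check by hand is the fully degenerate case $a=b=0$, where $ab/d^2=0$ for every $d$: here the term $(d)=M^\perp$ contributes $-|M|$ while the term $(d)=(0)$, with its zero-ring quotient, contributes $|R|$, and the two combine to give $K(\ct,\ct)=|R^\times|$, so one must fix conventions for that boundary term with care.
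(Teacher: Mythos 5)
Your proposal is correct and follows essentially the same route as the paper's proof: reduce to the local case by multiplicativity, then show each right-hand summand vanishes unless the twist $(ab/d^2).(d.\psi)$ is primitive---forcing $(d)=(a)=(b)$ and giving the reduced Kloosterman sum---or trivial---a Ramanujan sum surviving only at $(d)=M^\perp$---and match this against the left side via the vanishing/reduction properties and Proposition~\ref{prop: rama}, including the same bookkeeping for the degenerate cases. The only differences are cosmetic: you make explicit the double-annihilator equivalence $(a)^\perp=(b)^\perp \Leftrightarrow (a)=(b)$ and the product-ring reduction, both of which the paper leaves implicit.
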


Let us explain the meaning of the formula \eqref{eq: gSK}. The left-hand side is a Kloosterman sum over $R$, while the right-hand side involves  Kloosterman sums over certain quotients of $R$. The summation is indexed by the principal ideals of $R$ which contain the principal ideals generated by $a$ and $b$. Each summand involves a chosen generator per principal ideal. The scaled character $d.\psi$, it has conductor $d^\perp$ and so we can view it as a primitive character of the quotient $R/d^\perp$. If $(d)\supseteq (a), (b)$, then $a=da_0$ and $b=db_0$ for some $a_0,b_0\in R$; the quotient $ab/d^2$ is to be interpreted as $a_0b_0\in R$. Note that $a_0$ and $b_0$ are well-defined mod $d^\perp$, so $a_0b_0$ is well-defined mod $d^\perp$ as well.

Each summand is, in fact, independent of the choice of a generator. Indeed, $d^\perp=(d)^\perp$ only depends on the ideal $(d)$. If $d'$ is another generator of $(d)$, then $d'$ and $d$ are unit multiples of each other, say $d'=ud$ for some $u\in R^\times$. (This fact, while not true for general commutative rings, does hold and it is not hard to prove in our finite context.) The Kloosterman sums in terms of $d'$ and $d$ are then equal, by shifting the unit $u$.

Let us also explain how the formula \eqref{eq: gSK} turns into the Selberg-Kuznetsov identity \eqref{eq: SK} in the case $R=\Z/q\Z$. We let $a,b\in \Z$, and we use the standard primitive character $e_q$ in place of $\psi$. Up to a slight abuse of notation, the left-hand side of \eqref{eq: gSK} is the classical Kloosterman sum $S(a,b; q)$. Turning to the right-hand side, we first note that the  principal ideals of $\Z/q\Z$ are parameterized by the positive divisors of $q$. If $d$ is a positive divisor of $q$, and $a\in \Z$, then $(d)\supseteq (a)$ in $\Z/q\Z$ if and only if $d|a$ in $\Z$. Thus, the summation is indexed by the positive divisors $d$ of $q$ which also divide $a$ and $b$. Further, if $d$ is a positive divisor of $q$, then $d^\perp=(q/d)$ in $\Z/q\Z$. Thus $|d^\perp|=d$, and $R/d^\perp=(\Z/q\Z)/(q/d)$ is naturally identified with $\Z/(q/d)\Z$. Finally, the scaled character $d.e_q$ on $\Z/q\Z$ corresponds to the standard primitive character $e_{q/d}$ on $\Z/(q/d)\Z$. Thus, the Kloosterman sum associated to each $d$ is indeed $S(1,mn/d^2; q/d)$. 

We point out that the value $d=0$ is allowed in \eqref{eq: gSK}. This is an extremal case that only arises when $a=b=0$. Then $d^\perp=R$, and the quotient $R/d^\perp$ is the degenerate zero ring. Reasonably, the Kloosterman sum over the zero ring equals $1$. A similar comment is needed for the classical Selberg-Kuznetsov identity \eqref{eq: SK}; there, the degeneration occurs when $d=q$.

\begin{proof}[Proof of Theorem~\ref{thm: gSK}]
The first observation is that we may reduce to the case when $R$ is local. The general case of a finite ring $R$ will follow by the multiplicative behaviour of the Kloosterman sums, as well as of all the other algebraic ingredients involved in \eqref{eq: gSK}. See Section~\ref{sec: mom} for an instance of this argument.

Assume, then, that $R$ is local, with maximal ideal $M$. Recall that $M^\perp$ is the minimal ideal, and it is principal. Put
\[K_{(d)}:=K(d.\psi, (ab/d^2).(d.\psi); R/d^\perp)\]
and recall that $d.\psi$ is a primitive character on $R/d^\perp$ for $d\neq 0$.

The first case we consider is when $a=0$ or $b=0$. Then $K_{(d)}=K(d.\psi, \ct; R/d^\perp)$. For $d\neq 0$, we have that $K_{(d)}$ vanishes unless $R/d^\perp$ is a field. The latter amounts to $d^\perp=M$, in other words, $(d)=M^\perp$. In this case, we have $K_{(d)}=-1$, and so $|d^\perp|\: K_{(d)}=-|M|$. We now turn to verifying \eqref{eq: gSK}, and we do so by computing the right-hand side. If $a=b=0$ then 
\begin{align*}
\sum_{(d)} |d^\perp|\: K_{(d)}=|R|+ \sum_{(d)\neq (0)} |d^\perp|\: K_{(d)}=|R|-|M|=|R^\times|=K(\ct,\ct; R).
\end{align*}  
If, say, $b=0$ but $a\neq 0$, then the right-hand side of \eqref{eq: gSK} is
\begin{align*}
\sum_{(d)\supseteq (a)} |d^\perp|\: K_{(d)}&=\sum_{M^\perp=(d)\supseteq (a)} -|M|=-|M| \: [(a)= M^\perp]=K(a.\psi,\ct; R).
\end{align*}  

The second case is when $a,b\neq 0$. Consider again the Kloosterman sum
\[K_{(d)}=K(d.\psi, (a_0b_0).(d.\psi); R/d^\perp)\]
corresponding to $(d)\supseteq (a), (b)$, where $a=da_0$ and $b=db_0$ as explained above. We determine which principal ideals $(d)$ have $K_{(d)}\neq 0$. Firstly, the character $(a_0b_0).(d.\psi)$ must be either primitive or trivial on $R/d^\perp$. Now $(a_0b_0).(d.\psi)$ is primitive if and only if $a_0b_0$ is a unit mod $d^\perp$. This means that both $a_0$ and $b_0$ are units mod $d^\perp$, which is equivalent to $a$ and $b$ being unit multiples of $d$. The ideal-theoretic reformulation is $(d)=(a)=(b)$. Note, furthermore, that for $(d)=(a)=(b)$ we have
\[K_{(d)}=K_{(a)}=K(a.\psi, b.\psi; R/a^\perp).\]
On the other hand, if the character $(a_0b_0).(d.\psi)$ is trivial, then $a_0b_0=0$ mod $d^\perp$. Then at least one of $a_0$ or $b_0$ is a non-unit mod $d^\perp$, and this translates into the fact that at least one of the inclusions $(d)\supseteq (a), (b)$ is proper. Recall, however, from the very first case above, that $K_{(d)}=K(d.\psi, \ct; R/d^\perp)$ is non-vanishing only for $(d)=M^\perp$. As $M^\perp$ is minimal, and $a,b\neq 0$, the inclusions $(d)\supseteq (a), (b)$ cannot be proper. In summary, the trivial character situation cannot be realized. 

In this second case, then, the right-hand side of \eqref{eq: gSK} is
\begin{align*}
\sum_{(d)\supseteq (a), (b)} |d^\perp|\: K_{(d)}= |a^\perp|\: K(a.\psi, b.\psi; R/a^\perp) \: [(a)= (b)]=K(a.\psi,b.\psi; R).
\end{align*}
Indeed, note that $(a)$ and $(b)$ are the conductors of the scaled characters $a.\psi$ and $b.\psi$ on $R$. So the latter equality captures the fundamental properties of Kloosterman sums: reduction, when $(a)= (b)$, respectively vanishing when $(a)\neq (b)$.
\end{proof}

The previous proof has a drawback: it merely verifies the identity \eqref{eq: gSK}. We are hopeful that a more conceptual proof exists, one which better explains why \eqref{eq: gSK} should be true.

\section{Twisted Kloosterman sums}\label{sec: twistedK}
In this section, we introduce a generalization of Kloosterman sums. The additional ingredient is a fixed multiplicative character, which `twists' the usual Kloosterman sum. By now, we know that the appropriate algebraic context is as follows.
 
 \begin{datum}
$R$ is a Frobenius ring, and $\psi$ is a fixed primitive character of $R$.
\end{datum}
 
\begin{defn}
A \emph{twisted (or generalized) Kloosterman sum} over $R$ is a character sum of the form
\[K_\tau(a)=\sum_{u\in R^\times} \tau(u)\:\psi(u+au^{-1})\]
where $a\in R$, and $\tau$ is a multiplicative character of $R$. 
\end{defn}

This definition takes the parameterized perspective on the usual Kloosterman sums, cf. end of Section~\ref{sec: kloos}, as its starting point. We usually think of the multiplicative character $\tau$ as being fixed, whereas $a\in R$ varies. We then refer to $\tau$ as the \emph{twist}.

Let us start with some basic observations concerning twisted Kloosterman sums. Firstly, we have
\[K_\tau(0)=G(\tau),\]
the Gauss sum of $\tau$ with respect to $\psi$. Secondly, twisted Kloosterman sums are no longer real numbers, as the usual Kloosterman sums were. Two useful formulas for the conjugate are
\begin{align*}
\overline{K_\tau(a)}&=\tau(-1)\:K_{\overline{\tau}}(a) \quad \textrm{for } a\in R,\\
\overline{K_\tau(a)}&=\tau(-a^{-1})\:K_{\tau}(a) \quad \textrm{for } a\in R^\times.
\end{align*}
Thirdly, the following hold.

\begin{prop}\label{prop: tK1}
For any twist $\tau$, we have the zero-mean relation
\begin{align*}
\sum_{a\in R} K_\tau(a)=0.
\end{align*}
For any two twists $\tau$ and $\eta$, we have the orthogonality relation
\begin{align*}
\sum_{a\in R} K_\tau(a)\: \overline{K_\eta(a)}=|R||R^\times| \: [\tau=\eta].
\end{align*}
\end{prop}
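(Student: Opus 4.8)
The plan is to prove both identities by substituting the definition of $K_\tau(a)$ and interchanging the order of summation, so that the sum over $a$ becomes an inner sum to which the additive orthogonality relation applies. For the zero-mean relation, I would write
\begin{align*}
\sum_{a\in R} K_\tau(a)=\sum_{a\in R}\sum_{u\in R^\times}\tau(u)\:\psi(u+au^{-1})=\sum_{u\in R^\times}\tau(u)\:\psi(u)\sum_{a\in R}\psi(au^{-1}).
\end{align*}
The crucial observation is that, since $u\in R^\times$, the map $a\mapsto au^{-1}$ is a bijection of $R$, so the inner sum is $\sum_{b\in R}\psi(b)$. As $\psi$ is primitive it is in particular non-trivial (assuming $R$ is not the zero ring), hence $\sum_{b\in R}\psi(b)=0$ by the first orthogonality relation. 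Each term of the outer sum thus vanishes, giving the zero-mean relation.

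For the orthogonality relation I would proceed analogously. Using the first conjugate formula $\overline{K_\eta(a)}=\eta(-1)\:K_{\overline{\eta}}(a)$, or more directly just conjugating inside the sum, I would expand
\begin{align*}
\sum_{a\in R} K_\tau(a)\:\overline{K_\eta(a)}=\sum_{a\in R}\sum_{u,v\in R^\times}\tau(u)\:\overline{\eta(v)}\:\psi(u+au^{-1})\:\overline{\psi(v+av^{-1})}.
\end{align*}
Collecting the $a$-dependence, the inner sum over $a$ is $\sum_{a\in R}\psi\big(a(u^{-1}-v^{-1})\big)$, which by Lemma~\ref{lem: primsum} (with $I=R$, so $I^\perp=(0)$) equals $|R|\:[u^{-1}-v^{-1}=0]=|R|\:[u=v]$. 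The primitivity of $\psi$ is exactly what forces this indicator to detect $u=v$ rather than some weaker congruence. After imposing $u=v$, the remaining factors $\psi(u)\overline{\psi(u)}=1$ and $\psi(au^{-1})\overline{\psi(au^{-1})}$ disappear, and the double sum collapses to
\begin{align*}
|R|\sum_{u\in R^\times}\tau(u)\:\overline{\eta(u)}=|R|\sum_{u\in R^\times}(\tau\overline{\eta})(u).
\end{align*}

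The final step is to evaluate this multiplicative character sum by the first orthogonality relation, now applied to the group $R^\times$: the sum $\sum_{u\in R^\times}(\tau\overline{\eta})(u)$ equals $|R^\times|$ if $\tau\overline{\eta}=\ct$, i.e. $\tau=\eta$, and vanishes otherwise. This yields $|R||R^\times|\:[\tau=\eta]$, as claimed. I do not anticipate a genuine obstacle here; the only point requiring care is making sure the primitivity of $\psi$ is invoked correctly at the moment the inner additive sum is evaluated, since without primitivity the indicator $[u=v]$ would be replaced by a coarser condition and the collapse would fail. Both computations are thus clean applications of Fubini-style reordering followed by the two orthogonality relations, one additive and one multiplicative.
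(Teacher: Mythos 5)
Your proposal is correct and follows essentially the same route as the paper's own proof: expand the definition, interchange the order of summation, evaluate the inner additive sum over $a$ using the primitivity of $\psi$ (yielding vanishing in the first case and $|R|\,[u=v]$ in the second), and finish with multiplicative orthogonality on $R^\times$. Your explicit appeal to Lemma~\ref{lem: primsum} with $I=R$ and your remark on where primitivity is genuinely needed are just slightly more detailed versions of steps the paper leaves implicit.
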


\begin{proof} Indeed:
\begin{align*}
\sum_{a\in R} K_\tau(a)&=\sum_{a\in R} \sum_{u\in R^\times} \tau(u)\: \psi(u)\:\psi(au^{-1})=\sum_{u\in R^\times} \tau(u)\:\psi(u) \sum_{a\in R}  \psi(au^{-1})=0
\end{align*}
since each inner sum clearly vanishes.

Turning to the second relation, for each $a\in R$ we write
\begin{align*}
K_\tau(a)\: \overline{K_\eta(a)}=\sum_{u,v\in R^\times} \tau(u)\:\overline{\eta}(v)\:\psi(u-v) \:\psi(a(u^{-1}-v^{-1}).
\end{align*}
Then
\begin{align*}
\sum_{a\in R} K_\tau(a)\: \overline{K_\eta(a)}&=\sum_{u,v\in R^\times} \tau(u)\:\overline{\eta}(v)\:\psi(u-v) \:\sum_{a\in R} \psi(a(u^{-1}-v^{-1})\\
&=\sum_{u,v\in R^\times} \tau(u)\:\overline{\eta}(v)\:\psi(u-v) \:|R|\: [u=v]\\
&=|R|\sum_{u\in R^\times} (\tau\overline{\eta})(u)=|R|\:|R^\times|\: [\tau=\eta]
\end{align*}
as claimed.
\end{proof}

Fourthly, we look into how the twisted Kloosterman sums depend on the fixed primitive character $\psi$. Let $\rho=c.\psi$, where $c\in R^\times$, be another primitive character of $R$. Keeping track of the primitive character, we write:
\begin{align*}
K_\tau(a; \rho)&=\sum_{u\in R^\times} \tau(u)\:\rho (u+au^{-1})=\sum_{u\in R^\times} \tau(u)\:\psi(cu+cau^{-1}) \\
&=\sum_{u\in R^\times} \tau(c^{-1}u)\:\psi(u+c^2au^{-1})=\overline{\tau}(c)\: K_\tau(ca^2; \psi).
\end{align*}
Therefore the collection of twisted Kloosterman sums $\{K_\tau(a): a\in R\}$ is rotated upon change of the primitive character $\psi$. In particular, the collection of absolute values $\{|K_\tau(a)|: a\in R\}$ is independent of $\psi$, and so intrinsic to the ring $R$. The same conclusions apply to the restricted collections $\{K_\tau(a): a\in R^\times\}$ and $\{|K_\tau(a)| : a\in R^\times\}$.

From now on, our study of twisted Kloosterman sums will mostly focus on non-primitive twists. This case encompasses the usual Kloosterman sums, but also the Sali\'e sums over local Frobenius rings which are not fields. In fact, the non-field assumption will appear often since, over a field, the only non-primitive twist is the trivial one.

We start with an important vanishing property. The first item is in line with the vanishing property of the usual Kloosterman sums, but the second item is a bit more surprising. It extends a well-known property of usual Kloosterman sums and of Sali\'e sums in the classical case $R=\Z/q\Z$, $q$ a proper power of an odd prime.

\begin{thm}\label{thm: tK2}
Assume $R$ is local, but not a field, and let $\tau$ be a non-primitive twist. Then the following hold:
\begin{itemize}
 \item[(i)] $K_\tau(a)=0$ whenever $a\notin R^\times$.
 \item[(ii)] If $R$ has odd characteristic then, furthermore, $K_\tau(a)=0$ whenever $a\notin (R^\times)^2$. 
 \end{itemize}
\end{thm}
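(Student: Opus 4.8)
The plan is to exploit the unique minimal ideal $M^\perp$ of the local Frobenius ring $R$, and in particular the fact that $(M^\perp)^2=(0)$, which holds because $M^\perp\subseteq M$ (as $R$ is not a field) and $M^\perp$ annihilates $M$. I would organize the sum $K_\tau(a)$ into cosets of the subgroup $1+M^\perp\subseteq R^\times$ and carry out an orbital summation analogous to Lemma~\ref{lem: sumlet}. First I would note that $\tau$ is trivial on $1+M^\perp$: since $\tau$ is non-primitive, its conductor is a nonzero ideal, hence contains the unique minimal ideal $M^\perp$, so $\tau=\ct$ on $1+M^\perp$. Next, for $s\in M^\perp$ I would use $(1+s)^{-1}=1-s$ (valid since $s^2=0$) to compute, for a coset representative $u\in R^\times$,
\[(1+s)u+a\big((1+s)u\big)^{-1}=(u+au^{-1})+s(u-au^{-1}).\]

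Applying $\psi$ and using its additivity, together with $\tau((1+s)u)=\tau(u)$, the sum over the coset $(1+M^\perp)u$ becomes
\[\tau(u)\,\psi(u+au^{-1})\sum_{s\in M^\perp}\psi\big(s(u-au^{-1})\big).\]
By Lemma~\ref{lem: primsum} and the double-annihilator identity $(M^\perp)^\perp=M$ from Lemma~\ref{lem: Fann}, the inner sum equals $|M^\perp|$ if $u-au^{-1}\in M$ and vanishes otherwise. Since $u$ is a unit, $u-au^{-1}\in M$ is equivalent, after multiplying by $u$, to $u^2-a\in M$, that is, to the congruence $[u]^2=[a]$ in the residue field $R/M$. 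Thus every coset sum is a scalar multiple of the indicator $[\,[u]^2=[a]\,]$.

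With this reduction in hand, both parts follow by inspecting when $[u]^2=[a]$ can hold for a unit $u$. For part (i), $a\notin R^\times$ means $a\in M$, so $[a]=0$ and the congruence forces $[u]=0$, impossible for a unit; hence every coset sum vanishes and $K_\tau(a)=0$. This argument needs no hypothesis on the characteristic. For part (ii), by part (i) we may assume $a\in R^\times$; then $a\notin(R^\times)^2$ means, since $(R^\times)^2=\ker\sigma$ and $\sigma$ is induced from the residue field, that $[a]$ is a non-square in $R/M$, so $[u]^2=[a]$ has no solution and again every coset sum vanishes. The main obstacle is the clean algebraic collapse in the coset computation: recognizing that the multiplicative action of $1+M^\perp$ turns the argument $u+au^{-1}$ into an additive shift by $s(u-au^{-1})$, which hinges on $(M^\perp)^2=(0)$, and then evaluating the resulting character sum via $(M^\perp)^\perp=M$. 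Once this is set up, both vanishing statements reduce to the elementary observation that $[u]^2=[a]$ has no unit solution when $[a]=0$ or when $[a]$ is a non-square.
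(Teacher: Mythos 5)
Your proposal is correct and follows essentially the same route as the paper: decompose $K_\tau(a)$ over cosets of $1+M^\perp$, use $\tau=\ct$ on $1+M^\perp$ and $(1+s)^{-1}=1-s$ to collapse each coset sum to $\tau(u)\,\psi(u+au^{-1})\sum_{s\in M^\perp}\psi\bigl(s(u-au^{-1})\bigr)$, and then apply Lemma~\ref{lem: primsum} with $(M^\perp)^\perp=M$. The only cosmetic difference is the final step, where you phrase the non-vanishing condition as the residue-field congruence $[u]^2=[a]$ while the paper phrases it as $a\in u_0^2(1+M)$; these are equivalent, and both dispose of cases (i) and (ii) identically.
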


\begin{proof} Let $a\in R$. In order to show that $K_\tau(a)=0$, in suffices to show that each partial sum of the form
\begin{align*}
\sum_{u\in u_0(1+M^\perp)} \tau(u)\:\psi(u+au^{-1})
\end{align*}
where $u_0\in R^\times$, vanishes. The above sumlet corresponds to a coset of the subgroup $1+M^\perp\subseteq R^\times$. 

By hypothesis, $\tau$ is trivial on the subgroup $1+M^\perp$. Therefore $\tau(u)=\tau(u_0)$ whenever $u\in u_0(1+M^\perp)$. So we have:
\begin{align*}
\sum_{u\in u_0(1+M^\perp)} \tau(u)\:\psi(u+au^{-1})&=\tau(u_0)\sum_{r\in M^\perp} \psi\big(u_0(1+r)+a(u_0(1+r))^{-1}\big)\\
&=\tau(u_0) \sum_{r\in M^\perp} \psi\big(u_0(1+r)+au_0^{-1}(1-r)\big)\\
&=\tau(u_0)\:\psi(u_0+au_0^{-1})\sum_{r\in M^\perp} \psi(r(u_0-au_0^{-1})).
\end{align*}
On the way, we have crucially used the fact that $r^2=0$, and so $(1+r)^{-1}=1-r$, whenever $r\in M^\perp$. Consider now the latter sum of the above computation. By Lemma~\ref{lem: primsum}, it vanishes unless $u_0-au_0^{-1}\in (M^\perp)^\perp=M$, equivalently, $a\in u_0^{2}(1+M)$. But this cannot happen: in case (i), because $u_0^{2}(1+M)\subseteq R^\times$, while $a\notin R^\times$; in case (ii), because $u_0^{2}(1+M)\subseteq (R^\times)^2$, while $a\notin (R^\times)^2$.
\end{proof}

\section{Moments}\label{sec: mom}
 \begin{datum}
$R$ is a Frobenius ring, and $\psi$ is a fixed primitive character of $R$.
\end{datum} 

A way into understanding the collections of Kloosterman sums $\{K(a): a\in R\}$ and $\{K(a) : a\in R^\times\}$, is to consider the \emph{moments} 
\begin{align*}
\sum_{a\in R} K(a)^k, \qquad \sum_{a\in R^\times} K(a)^k
\end{align*}
for $k=1, 2, \dots$. In what follows, we refer to the first as a \emph{full moment}, respectively \emph{unitary moment} to the restricted one. Note that the moment values are intrinsic to the ring $R$, in the sense that they do not depend on the choice of primitive character $\psi$, cf. discussion ending Section~\ref{sec: kloos}. 

The full moments and the unitary moments are multiplicative as functions of the ring $R$. To see this, let us put
\[\mu^\times_k(R)=\sum_{a\in R^\times} K(a)^k\]
and let us show that $\mu_k^\times(R_1\times R_2)=\mu_k^\times(R_1)\:\mu_k^\times(R_2)$. If $\psi_1$ and $\psi_2$ are primitive characters of $R_1$, respectively $R_2$, then $\psi_1\times \psi_2$ is a primitive character of $R_1\times R_2$, and we have
\begin{align*}
\mu_k^\times(R_1)\:\mu_k^\times(R_2)&=\Big(\sum_{a_1\in R_1^\times} K(\psi_1, a_1.\psi_1)^k\Big)\Big(\sum_{a_2\in R_2^\times} K(\psi_2, a_2.\psi_2)^k\Big)\\
&= \sum_{(a_1,a_2)\in R_1^\times\times R_2^\times} \big(K(\psi_1, a_1.\psi_1)\: K(\psi_2, a_2.\psi_2)\big)^k\\
&=\sum_{(a_1,a_2)\in (R_1\times R_2)^\times} K\big(\psi_1\times\psi_2, (a_1,a_2).(\psi_1\times\psi_2)\big)^k\\
&=\mu_k^\times(R_1\times R_2).
\end{align*}
Along the way, we have used the multiplicative property of Kloosterman sums. A similar argument applies to the full moments.

Turning now to general context of twisted Kloosterman sums, we consider the moments
 \begin{align*}
\sum_{a\in R} K_\tau(a)^k, \qquad \sum_{a\in R^\times} K_\tau(a)^k
\end{align*}
as before, but also the \emph{absolute moments}
 \begin{align*}
\sum_{a\in R} \big|K_\tau(a)\big|^k, \qquad \sum_{a\in R^\times} \big|K_\tau(a)\big|^k.
\end{align*}

For instance, by Proposition~\ref{prop: tK1} we have
\begin{align*}
\sum_{a\in R} K_\tau(a)=0, \qquad \sum_{a\in R} \big|K_\tau(a)\big|^2=|R||R^\times|.
\end{align*}
 
The absolute moments of twisted Kloosterman sums are independent of the choice of $\psi$, while the moments themselves have a minor phase dependence. Moments, absolute or not, factor according to the factorization of the underlying ring $R$. This can be shown in a manner similar to the one exhibited above, for the moments of the usual Kloosterman sums, the adjustment being that the twist $\tau$ has to be factored as well. 

And so we focus, again, on the local case. In this case, and for a non-primitive twist, the full and the unitary moments are nearly the same.

\begin{lem}\label{lem: link}
Assume $R$ is local, and let $\tau$ be a non-primitive twist. Then
\begin{align*}
\sum_{a\in R} K_\tau(a)^k&=K_\tau(0)^k+\sum_{a\in R^\times} K_\tau(a)^k,\\
\sum_{a\in R} \big|K_\tau(a)\big|^k&=\big|K_\tau(0)\big|^k+\sum_{a\in R^\times} \big|K_\tau(a)\big|^k.
\end{align*}
Furthermore, if $R$ is not a field then $K_\tau(0)=0$.
\end{lem}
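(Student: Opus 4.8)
The plan is to exploit the local decomposition $R=R^\times\sqcup M$ of a local ring into units and non-units, and to reduce everything to the vanishing already established in Theorem~\ref{thm: tK2}. Splitting the full sum accordingly gives
\[
\sum_{a\in R} K_\tau(a)^k=\sum_{a\in R^\times} K_\tau(a)^k+\sum_{a\in M} K_\tau(a)^k,
\]
so both displayed identities come down to the single claim that the sum over $M$ equals its $a=0$ term, that is, $\sum_{a\in M}K_\tau(a)^k=K_\tau(0)^k$. This in turn amounts to $K_\tau(a)=0$ for every \emph{nonzero} $a\in M$. The absolute-moment identity follows from the identical splitting with $K_\tau(a)^k$ replaced by $|K_\tau(a)|^k$, so it suffices to settle this one claim.

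To prove the claim I would separate the two possibilities for a local ring. If $R$ is a field then $M=(0)$, so $M$ contains no nonzero elements and the $M$-sum is literally its $a=0$ term; nothing further is required. If $R$ is not a field, then every nonzero $a\in M$ is a nonzero non-unit, and Theorem~\ref{thm: tK2}(i) applies directly to give $K_\tau(a)=0$ for all $a\notin R^\times$, in particular for all nonzero $a\in M$. This establishes $\sum_{a\in M}K_\tau(a)^k=K_\tau(0)^k$ in both cases, and hence both displayed identities.

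For the final assertion, note that $K_\tau(0)=G(\tau)$ is the Gauss sum of the twist. When $R$ is not a field, the value $a=0$ is itself a non-unit, so Theorem~\ref{thm: tK2}(i) gives $K_\tau(0)=0$ at once; alternatively one may invoke Lemma~\ref{lem: gauss}(i), since a non-primitive twist over a non-field—whether trivial or neither trivial nor primitive—has vanishing Gauss sum. Either route closes the argument.

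I do not expect a serious obstacle: the statement is essentially a bookkeeping consequence of the vanishing theorem already in hand. The only point needing care is the field/non-field dichotomy. In the non-field case the vanishing theorem already forces $K_\tau(0)=0$, so the term $K_\tau(0)^k$ on the right-hand sides is itself zero and the identities reduce to an equality of unitary moments; in the field case the term $K_\tau(0)^k$ is genuinely present, but then $M=(0)$ contributes nothing beyond it. Keeping these two pictures straight is the whole content of the proof.
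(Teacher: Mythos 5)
Your proposal is correct and follows essentially the same route as the paper: the field case is trivial since $M=(0)$, the non-field case is exactly the vanishing property of Theorem~\ref{thm: tK2}(i), and the final assertion $K_\tau(0)=G(\tau)=0$ via Lemma~\ref{lem: gauss}(i). Your additional observation that in the non-field case Theorem~\ref{thm: tK2}(i) already forces $K_\tau(0)=0$ (since $0\notin R^\times$) is a valid, slightly more economical way to get the last assertion than the paper's appeal to the Gauss-sum lemma.
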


\begin{proof}
If $R$ is a field, the relations are trivially true. If $R$ is not a field, then the relations follow from the vanishing property of Theorem~\ref{thm: tK2}(i). Moreover, in this case $K_\tau(0)=G(\tau)$ vanishes as well, by Lemma~\ref{lem: gauss}(i).
\end{proof}

Our main interest, fuelled by classical results, is in the unitary moments, absolute or not. The usefulness of the above lemma stems from the observation that full moments are sometimes easier to manipulate and compute. This full-unitary strategy is exploited in the following computation. 

\begin{thm}\label{thm: 3mom} Assume $R$ is local, but not a field. Assume also that $R$ has odd characteristic, and $3$ is a unit in $R$. Then, for any non-primitive twist $\tau$, we have
\begin{align*}
\sum_{a\in R^\times} K_\tau(a)^3=0.
\end{align*}
\end{thm}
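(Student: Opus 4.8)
The natural strategy is to expand the third power, interchange the order of summation so that the sum over $a\in R$ is innermost, and exploit the additive orthogonality relation to collapse the resulting expression. Concretely, I would begin by using Lemma~\ref{lem: link}: since $R$ is not a field and $\tau$ is non-primitive, we have $K_\tau(0)=0$, so the unitary moment equals the full moment,
\begin{align*}
\sum_{a\in R^\times} K_\tau(a)^3=\sum_{a\in R} K_\tau(a)^3.
\end{align*}
Working with the full moment is the point of the lemma: the outer sum now ranges over all of $R$, which is what makes the orthogonality relation applicable. Expanding the cube gives
\begin{align*}
\sum_{a\in R} K_\tau(a)^3=\sum_{u,v,w\in R^\times} \tau(uvw)\:\psi(u+v+w)\sum_{a\in R}\psi\big(a(u^{-1}+v^{-1}+w^{-1})\big),
\end{align*}
and the innermost sum over $a$ equals $|R|$ if $u^{-1}+v^{-1}+w^{-1}=0$ and vanishes otherwise. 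So the moment reduces to $|R|$ times a sum over triples of units $(u,v,w)$ satisfying the single constraint $u^{-1}+v^{-1}+w^{-1}=0$.

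The next step is to understand this constraint and to use the twist-triviality of $\tau$ on $1+M^\perp$ together with the odd-characteristic hypothesis to produce a vanishing involution or a sumlet argument. The cleanest route is likely a substitution: the condition $u^{-1}+v^{-1}+w^{-1}=0$ is symmetric and homogeneous, so I would look for a fixed-point-free sign-change or scaling symmetry of the solution set under which the summand $\tau(uvw)\psi(u+v+w)$ either changes by a fixed nontrivial root of unity or maps to its negative. Since $3$ is a unit and the characteristic is odd, the map $(u,v,w)\mapsto(-u,-v,-w)$ preserves the constraint and multiplies the summand by $\tau(-1)^3\psi(-(u+v+w))$; more promisingly, one can try to pair each solution with a scaled or permuted partner so that the $\psi$-values cancel. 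The most plausible mechanism, paralleling the sumlet technique used in Theorem~\ref{thm: tK2} and in the Jacobi sum arguments, is to partition the solution triples into orbits of the group $1+M^\perp$ acting coordinatewise and show each orbital subsum vanishes, again using $r^2=0$ for $r\in M^\perp$ and the primitivity of $\psi$ to force a nontrivial additive character to appear.

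\textbf{Main obstacle.} The hard part will be handling the solution set of $u^{-1}+v^{-1}+w^{-1}=0$ over a local ring rather than a field: over a field one could solve-count directly, but here the structure of $R^\times$ and the interplay between the twist and the multiplicative geometry is more delicate. I expect the decisive input to be the requirement that $3$ be a unit, which should intervene precisely when the involution or orbital pairing has a potential fixed point—the diagonal-type solutions where $u,v,w$ coincide up to units forcing a factor of $3$. Ruling out such fixed points, or showing the fixed-point contribution itself vanishes by the quadratic-character count of Lemma~\ref{lem: solutions}, is where the argument will require the most care; everything upstream is a routine expansion and orthogonality collapse.
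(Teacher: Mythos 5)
Your opening moves---passing to the full moment via Lemma~\ref{lem: link} (using $K_\tau(0)=0$), expanding the cube, and collapsing the $a$-sum by orthogonality to
\begin{align*}
\frac{1}{|R|}\sum_{a\in R^\times} K_\tau(a)^3=\sum_{\substack{u,v,w\in R^\times \\ u^{-1}+v^{-1}+w^{-1}=0}} \tau(uvw)\:\psi(u+v+w)
\end{align*}
---agree exactly with the paper. But the mechanism you propose for this constrained sum does not work, and this is where the entire difficulty lies. Acting diagonally by $z\in 1+M^\perp$ multiplies the summand by $\tau(z^3)\,\psi\big((z-1)s\big)$, where $s=u+v+w$, and $\tau(z^3)=1$ since $\tau$ is trivial on the subgroup $1+M^\perp$; so by Lemma~\ref{lem: primsum} (together with $(M^\perp)^\perp=M$) each orbital subsum equals $|M^\perp|\,\tau(uvw)\,\psi(s)\,[s\in M]$. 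Thus the orbit argument only restricts the sum to triples with $u+v+w\in M$; it does not make it vanish. Worse, orbit-by-orbit vanishing is genuinely false: if $|R/M|\equiv 1 \pmod 3$, then $R^\times$ contains an element $\omega$ of order $3$, necessarily satisfying $1+\omega+\omega^2=0$, and the triple $(1,\omega,\omega^2)$ satisfies both constraints and has orbital subsum $|M^\perp|\,\tau(\omega^3)\,\psi(0)=|M^\perp|\neq 0$. So the surviving orbits must cancel against one another, and your plan provides no mechanism for that. (No simple global symmetry does it either: $(u,v,w)\mapsto(-u,-v,-w)$ preserves the constraint but merely multiplies by $\tau(-1)$ and conjugates the additive phase, yielding a symmetry relation rather than a cancellation.)

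The paper's proof needs three further ideas at exactly this point. First, the solution set is parameterized as $\{(bu,bv,-buv): b,u,v\in R^\times,\ u+v=1\}$, turning the constrained sum into a sum over $b\in R^\times$ and over pairs $u+v=1$, grouped by $c=uv$. Second, the $b$-sum is a Gauss sum, $\tau(-c^2)\,G\big(\tau^3,(1-c).\psi\big)$, which vanishes whenever $1-c\in R^\times$ because $\tau^3$ is non-primitive (Lemma~\ref{lem: gauss}(i)); this is the decisive use of non-primitivity, and it cuts the sum down to $c\in 1+M$. Third, for $c\in 1+M$ the number of pairs is $1+\sigma(1-4c)=1+\sigma(-3)$ by Lemma~\ref{lem: solutions}: this---not the exclusion of fixed points---is where the hypothesis $3\in R^\times$ enters, making the discriminant a unit so that the count is uniform in $c$. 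Since $1+\sigma(-3)=2$ precisely in the cube-root-of-unity case above, the proof still cannot stop there; it closes with a final sumlet identity, $\sum_{m\in M}\tau^2(1+m)\,(-b.\psi)(m)=0$, proved over cosets of $M^\perp$ in $M$. Your instinct that Lemma~\ref{lem: solutions} and the unit $3$ would intervene is in the right direction, but the parameterization, the Gauss-sum vanishing, and the closing sumlet identity---the actual engine of the proof---are all missing from the proposal.
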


\begin{proof}
For each $a\in R$, we have
\begin{align*}
K_\tau(a)^3=\sum_{u,v,w\in R^\times} \tau(uvw)\: \psi(u+v+w)\:\psi(a(u^{-1}+v^{-1}+w^{-1}))
\end{align*}
and so
\begin{align*}
\sum_{a\in R} K_\tau(a)^3&= \sum_{u,v,w\in R^\times} \tau(uvw)\:\psi(u+v+w)\:\sum_{a\in R}\psi(a(u^{-1}+v^{-1}+w^{-1}))\\
&=|R| \sum_{\substack{u,v,w\in R^\times \\ u^{-1}+v^{-1}+w^{-1}=0}} \tau(uvw)\:\psi(u+v+w).
\end{align*}
The left-hand side falls under the scope of Lemma~\ref{lem: link}. On the right, the indexing set $\{(u,v,w): u,v,w\in R^\times, u^{-1}+v^{-1}+w^{-1}=0\}$ can be parameterized as $\{(bu,bv,-buv): u,v,b\in R^\times, u+v=1\}$. We continue our computation as follows:
\begin{align*}
\frac{1}{|R|}\sum_{a\in R^\times} K_\tau(a)^3&=\sum_{b\in R^\times}\sum_{\substack{u,v\in R^\times\\ u+v=1}} \tau(-b^3(uv)^2)\:\psi (b-buv)\\
&=\sum_{c\in R^\times}\sum_{\substack{u,v\in R^\times\\ u+v=1,\: uv=c}} \sum_{b\in R^\times}\tau(-b^3c^2)\:\psi (b-bc).
\end{align*}
The inner-most sum is, essentially, a Gauss sum:
\begin{align*}
\sum_{b\in R^\times}\tau(-b^3c^2)\:\psi (b-bc)&=\tau(-c^2)\sum_{b\in R^\times}\tau^3(b)\:\psi ((1-c)b)\\
&=\tau(-c^2)\: G(\tau^3, (1-c).\psi).
\end{align*}
As $\tau$ is a non-primitive multiplicative character, so is $\tau^3$. By Lemma~\ref{lem: gauss}(i), we know that $G(\tau^3, (1-c).\psi)=0$ whenever $(1-c).\psi$ is a primitive character, that is, whenever $1-c\in R^\times$. So the above inner-most sum vanishes whenever $1-c\in R^\times$. We can then restrict to the case when $1-c\in M$, that is, $c\in 1+M\subseteq R^\times$. 

The next counting step comes courtesy of Lemma~\ref{lem: solutions}. Consider a system $u+v=1$, $uv=c$ with $c\in 1+M$. Then the discriminant $\Delta$ is a unit in $R$, for $\Delta=1-4c\equiv -3$ mod $M$, and $3\in R^\times$ by assumption. Hence $\sigma(\Delta)=\sigma(-3)$, and the system has $1+\sigma(-3)$ unit solutions. Returning to our computation, we get
\begin{align*}
\frac{1}{|R|}\sum_{a\in R^\times} K_\tau(a)^3&=\sum_{c\in 1+M}\sum_{\substack{u,v\in R^\times\\ u+v=1,\: uv=c}} \sum_{b\in R^\times}\tau(-b^3c^2)\:\psi (b-bc)\\
&= (1+\sigma(-3))\: \sum_{c\in 1+M}\sum_{b\in R^\times}\tau(-b^3c^2)\:\psi (b-bc)\\
&= (1+\sigma(-3))\: \sum_{m\in M}\sum_{b\in R^\times}\tau(-b^3(1+m)^2)\:\psi (-bm)\\
&= (1+\sigma(-3))\: \sum_{b\in R^\times}\tau(-b^3)\sum_{m\in M}\tau^2(1+m)\:(-b.\psi) (m).
\end{align*}
Now $\tau^2$ is a non-primitive multiplicative character, while $-b.\psi$ is a primitive character for each $b\in R^\times$. The desired vanishing, $\sum_{a\in R^\times} K_\tau(a)^3=0$, will follow as soon as we establish the following claim: for any non-primitive multiplicative character $\chi$, and any primitive character $\phi$, we have
\[\sum_{m\in M}\chi(1+m)\:\phi (m)=0.\]
To that end, it suffices to show vanishing over each coset of the subgroup $M^\perp\subseteq M$. Namely, consider a sumlet of the form
\[\sum_{m\in M^\perp}\chi(1+m+r)\:\phi (m+r)\]
 where $r\in M$. For each $m\in M^\perp$, we can write $1+m+r=(1+r)(1+(1+r)^{-1}m)$ where $1+(1+r)^{-1}m\in 1+M^\perp$. But the multiplicative character $\chi$ is, by assumption, trivial on $1+M^\perp$. Therefore $\chi(1+m+r)=\chi(1+r)$ for each $m\in M^\perp$. Then
 \[\sum_{m\in M^\perp}\chi(1+m+r)\:\phi (m+r)=\chi(1+r)\: \phi(r) \sum_{m\in M^\perp}\phi (m)=0.\]
The argument is complete. \end{proof}

The computation of the third Kloosterman moment, in the case of a finite field $R$, is a classical result due to Sali\'e \cite{S} (see also \cite[Sec.4.4]{I}). The outcome is as follows: if $R$ is a field of odd characteristic, then
\begin{align*}
\sum_{a\in R^\times} K(a)^3&=\sigma(-3)\: |R|^2+2|R|+1.
\end{align*}
Note that the quadratic symbol of $-3$, explicit in the case of a field, features in the proof in the non-field case.

\section{Weighted moments}\label{sec: twistmom}
\begin{datum}
$R$ is a local Frobenius ring, and $\psi$ is a fixed primitive character of $R$.
\end{datum}

By \emph{weighted moments} of twisted Kloosterman sums we mean sums of the form
\begin{align*}
\sum_{a\in R^\times} \chi(a)\: K_\tau(a)^k, \quad \sum_{a\in R^\times} \chi(a)\: |K_\tau(a)|^k
\end{align*}
where $\chi$ is a multiplicative character of $R$, and $k=1,2,\dots$. Our goal in this section is the computation of the first two weighted moments. As we will see, the results are expressed in terms of Gauss and Jacobi sums.

\begin{thm}\label{thm: GKtwist}
Let $\tau$ be any twist. For each multiplicative character $\chi$ of $R$, the following holds:
\begin{align*}
\sum_{a\in R^\times} \chi(a)\: K_\tau(a)=G(\chi)\: G(\chi\tau).
\end{align*}
\end{thm}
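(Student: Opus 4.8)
The plan is to expand $K_\tau(a)$ into its defining sum and swap the order of summation so that the sum over $a\in R^\times$ becomes an inner sum that we can recognize. Writing out the weighted first moment, we have
\begin{align*}
\sum_{a\in R^\times} \chi(a)\: K_\tau(a)=\sum_{a\in R^\times} \chi(a) \sum_{u\in R^\times} \tau(u)\:\psi(u+au^{-1})=\sum_{u\in R^\times} \tau(u)\:\psi(u) \sum_{a\in R^\times} \chi(a)\:\psi(au^{-1}).
\end{align*}
The inner sum over $a$ is almost a Gauss sum, except that the additive argument is $au^{-1}$ rather than $a$. First I would substitute $a=bu$, as $b$ ranges over $R^\times$ when $a$ does (since $u\in R^\times$); this gives $\chi(a)=\chi(b)\chi(u)$ and $\psi(au^{-1})=\psi(b)$, turning the inner sum into $\chi(u)\sum_{b\in R^\times}\chi(b)\:\psi(b)=\chi(u)\: G(\chi)$.

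With this substitution the inner Gauss sum $G(\chi)$ factors out entirely, and I am left with
\begin{align*}
\sum_{a\in R^\times} \chi(a)\: K_\tau(a)=G(\chi)\sum_{u\in R^\times} (\chi\tau)(u)\:\psi(u)=G(\chi)\: G(\chi\tau),
\end{align*}
where the remaining sum over $u$ is recognized as the Gauss sum $G(\chi\tau)$ by collecting $\tau(u)\chi(u)=(\chi\tau)(u)$. This yields the claimed identity directly, without needing any case analysis on primitivity or on whether $R$ is a field.

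The computation is essentially a two-step change of variables followed by recognition of Gauss sums, so there is no real obstacle here — the key move is simply the substitution $a\mapsto bu$ that decouples the two multiplicative variables. The one point worth stating carefully is that the substitution is legitimate because $u$ is a unit, so $a\mapsto au^{-1}$ is a bijection of $R^\times$ onto itself; this is what makes the inner sum over $a$ collapse cleanly into $\chi(u)G(\chi)$ rather than into some more complicated twisted sum. I would present the two displayed computations above as the whole proof, noting at the end that both factors are Gauss sums taken with respect to the fixed primitive character $\psi$, consistent with the shorthand $G(\chi)=G(\psi,\chi)$ introduced earlier.
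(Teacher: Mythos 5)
Your proposal is correct and matches the paper's own proof essentially verbatim: the same interchange of summation, the same change of variable $a\mapsto au$ (your $a=bu$) to decouple the variables, and the same recognition of the two resulting sums as $G(\chi)$ and $G(\chi\tau)$. Your explicit remark that the substitution is legitimate because $u\in R^\times$ is a nice touch of rigor that the paper leaves implicit.
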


\begin{proof} We have
\begin{align*}
\sum_{a\in R^\times} \chi(a)\: K_\tau(a)&=\sum_{a\in R^\times} \chi(a) \sum_{u\in R^\times} \tau(u)\:\psi(u+au^{-1})\\
&=\sum_{u\in R^\times} \tau(u)\: \psi(u) \sum_{a\in R^\times} \chi(a) \:\psi(au^{-1})\\
&=\sum_{u\in R^\times} \tau(u)\:\chi(u)\: \psi(u) \sum_{a\in R^\times} \chi(a) \:\psi(a)
\end{align*}
In the last step, we have made the change of variable $a:=au$ in the inner sum. Visibly, the outcome is $G(\chi\tau)\: G(\chi)$.
\end{proof}

In particular, taking $\chi=\ct$ we deduce the following.

\begin{cor}\label{cor: 1sttwist}
Assume that $R$ is not a field, and let $\tau$ be any twist. Then:
\[\sum_{a\in R^\times} K_\tau(a)=0.\]
\end{cor}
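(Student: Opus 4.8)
The plan is to read off the corollary directly from Theorem~\ref{thm: GKtwist} by specializing the weight $\chi$ to the trivial multiplicative character $\ct$. Setting $\chi=\ct$ in the identity
\[\sum_{a\in R^\times} \chi(a)\: K_\tau(a)=G(\chi)\: G(\chi\tau)\]
collapses the left-hand side to the plain unitary first moment $\sum_{a\in R^\times} K_\tau(a)$, since $\ct(a)=1$ for every $a\in R^\times$. On the right-hand side, the product becomes $G(\ct)\: G(\ct\cdot\tau)=G(\ct)\: G(\tau)$, because $\ct\tau=\tau$.

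It then remains to argue that the factor $G(\ct)$ vanishes. This is precisely where the hypothesis that $R$ is not a field enters, and it is the only nontrivial input: by the second assertion of Lemma~\ref{lem: gauss}(i), the Gauss sum of the trivial multiplicative character is zero whenever $R$ is not a field. Hence $G(\ct)=0$, and the whole product $G(\ct)\: G(\tau)$ is $0$ regardless of the value of $G(\tau)$ or the nature of the twist $\tau$.

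Combining these two observations gives $\sum_{a\in R^\times} K_\tau(a)=G(\ct)\: G(\tau)=0$, which is the claimed identity. I expect no genuine obstacle here: the corollary is an immediate specialization, and the sole point requiring care is invoking the non-field hypothesis at exactly the step where $G(\ct)$ is shown to vanish. It is worth noting that this argument works for an arbitrary twist $\tau$, primitive or not, since the vanishing is forced entirely by the first Gauss factor.
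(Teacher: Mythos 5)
Your proposal is correct and is exactly the paper's argument: the corollary is deduced by taking $\chi=\ct$ in Theorem~\ref{thm: GKtwist}, with the non-field hypothesis entering only to force $G(\ct)=0$ via Lemma~\ref{lem: gauss}(i). No issues.
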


In effect, Theorem~\ref{thm: GKtwist} computes the multiplicative Fourier transform of the twisted Kloosterman function $K_\tau:R^\times\to \C$. With this interpretation in mind, we state the following.

\begin{lem}[Plancherel identity] Let $f:R^\times \to \C$ be a function. Then
\[\sum_{a\in R^\times} \big|f(a)\big|^2=\frac{1}{|R^\times|}\sum_{\chi\in \widehat{R^\times}} \:\Big|\sum_{a\in R^\times} \chi(a)\: f(a)\Big|^2.\]
\end{lem}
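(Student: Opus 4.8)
The plan is to prove the Plancherel identity directly from the orthogonality relations for multiplicative characters of the finite abelian group $R^\times$, exactly as recalled in the Preliminaries. Since $\widehat{R^\times}$ is the dual of $R^\times$ and $|\widehat{R^\times}|=|R^\times|$, this is the standard finite Fourier-analytic statement, and no special structure of $R$ beyond finiteness of the unit group is needed.

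First I would expand the right-hand side by writing out the square as a double sum over $a,b\in R^\times$ and interchanging the order of summation with the sum over characters $\chi\in\widehat{R^\times}$. Explicitly,
\begin{align*}
\frac{1}{|R^\times|}\sum_{\chi\in \widehat{R^\times}} \Big|\sum_{a\in R^\times} \chi(a)\: f(a)\Big|^2
=\frac{1}{|R^\times|}\sum_{\chi\in \widehat{R^\times}}\;\sum_{a,b\in R^\times} \chi(a)\:\overline{\chi(b)}\:f(a)\:\overline{f(b)}.
\end{align*}
The key step is to recognise $\chi(a)\overline{\chi(b)}=\chi(ab^{-1})$, valid because $\chi$ is a character of the multiplicative group $R^\times$ and $b\in R^\times$ is invertible. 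Pulling the sum over $\chi$ inside, the second orthogonality relation from the Preliminaries gives $\sum_{\chi\in\widehat{R^\times}}\chi(ab^{-1})=|R^\times|\,[ab^{-1}=1]=|R^\times|\,[a=b]$.

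Substituting this back collapses the double sum to the diagonal $a=b$, and the factor $|R^\times|$ cancels the prefactor $1/|R^\times|$, leaving $\sum_{a\in R^\times}f(a)\overline{f(a)}=\sum_{a\in R^\times}|f(a)|^2$, which is precisely the left-hand side.

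There is no real obstacle here; the only point demanding a word of care is the application of orthogonality in its correct ``dual'' form. The orthogonality relation as stated in the Preliminaries is phrased for the group $G$ and its dual $\widehat{G}$, and summing a fixed character over all group elements; what we need is the companion relation, summing all characters evaluated at a fixed group element. Both are listed in the excerpt, so I would simply invoke the second one with $G=R^\times$ and group element $ab^{-1}$. Everything else is a routine reindexing, and the interchange of finite sums needs no justification.
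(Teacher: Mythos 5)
Your proof is correct: expanding the square, swapping sums, and applying the dual orthogonality relation $\sum_{\chi\in\widehat{G}}\chi(g)=|G|\,[g=\text{identity}]$ to the group $G=R^\times$ (with the element $ab^{-1}$, so the bracket becomes $[a=b]$) is exactly the standard argument, and you correctly flag the one point of care, namely which of the two orthogonality relations from the Preliminaries is needed. The paper states this lemma without proof precisely because it is this routine finite-Fourier fact, so your argument supplies the intended justification.
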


Our main use of Theorem~\ref{thm: GKtwist} is the following computation.

\begin{cor}\label{cor: 2ndtwist}
Assume that $R$ is not a field, and let $\tau$ be any twist. Then:
\begin{align*}\sum_{a\in R^\times} \big|K_\tau(a)\big|^2=\begin{cases} |R||R^\times|& \textrm{ if } \tau  \textrm{ is non-primitive,}\\
2|R||R^\times|-|R|^2 &  \textrm{ if } \tau  \textrm{ is primitive.}
\end{cases}
\end{align*}
\end{cor}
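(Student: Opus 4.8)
The plan is to compute the second absolute moment via the Plancherel identity, using Theorem~\ref{thm: GKtwist} to evaluate the multiplicative Fourier transform of $K_\tau$. Applying Plancherel to $f=K_\tau$ and inserting the formula $\sum_{a\in R^\times} \chi(a)\:K_\tau(a)=G(\chi)\:G(\chi\tau)$ gives
\begin{align*}
\sum_{a\in R^\times} \big|K_\tau(a)\big|^2=\frac{1}{|R^\times|}\sum_{\chi\in \widehat{R^\times}} \big|G(\chi)\big|^2\:\big|G(\chi\tau)\big|^2.
\end{align*}
The task is then to evaluate this sum over all multiplicative characters $\chi$, and the key tool is Lemma~\ref{lem: gauss}, which pins down $|G(\chi)|$ according to whether $\chi$ is trivial, primitive, or neither.

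The central observation is that a single factor $|G(\chi)|^2$ is nonzero only in two situations: $G(\chi)=0$ unless $\chi$ is primitive (giving $|G(\chi)|^2=|R|$) or $\chi$ is trivial and $R$ is a field. Since we are assuming $R$ is not a field, $G(\ct)=0$, so only primitive $\chi$ contribute $|G(\chi)|^2=|R|$, and all other $\chi$ give $0$. Therefore the product $|G(\chi)|^2\:|G(\chi\tau)|^2$ is nonzero precisely when both $\chi$ and $\chi\tau$ are primitive, in which case it equals $|R|^2$. So the whole computation reduces to counting the multiplicative characters $\chi$ for which both $\chi$ and $\chi\tau$ are primitive:
\begin{align*}
\sum_{a\in R^\times} \big|K_\tau(a)\big|^2=\frac{|R|^2}{|R^\times|}\cdot \#\{\chi: \chi \textrm{ and } \chi\tau \textrm{ are both primitive}\}.
\end{align*}

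To carry out this count I would work with the characterization of primitivity from Section~\ref{sec: multichar}: a multiplicative character is non-primitive exactly when it is trivial on the subgroup $1+M^\perp$. Thus primitivity of $\chi$ and of $\chi\tau$ are conditions detected by restriction to $1+M^\perp$. The set of non-primitive characters is the annihilator of $1+M^\perp$ in $\widehat{R^\times}$, a subgroup of index $|1+M^\perp|=|M^\perp|$; hence there are $|R^\times|/|M^\perp|$ non-primitive characters and $|R^\times|-|R^\times|/|M^\perp|$ primitive ones. When $\tau$ is non-primitive, $\chi\tau$ is primitive if and only if $\chi$ is, so the count is simply the number of primitive $\chi$, namely $|R^\times|(1-1/|M^\perp|)=|R^\times|-|R^\times|/|M^\perp|$. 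Multiplying by $|R|^2/|R^\times|$ and using $|M^\perp|=|R|/|M|$ (valid by the Frobenius annihilator relation, Lemma~\ref{lem: Fann}) together with $|M|=|R|-|R^\times|$ should collapse the expression to $|R||R^\times|$.

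The more delicate case, and the main obstacle, is the primitive twist $\tau$. Here $\chi$ primitive does not force $\chi\tau$ primitive, so I must count pairs where both conditions hold simultaneously. Using restriction to $1+M^\perp$ and inclusion–exclusion, I would write the desired count as the number of primitive $\chi$ minus those primitive $\chi$ for which $\chi\tau$ is non-primitive; the latter forces $\chi$ to agree with $\overline\tau$ on $1+M^\perp$, a coset condition selecting $|R^\times|/|M^\perp|$ characters, of which I must then discard the non-primitive ones. Carefully tracking these coset counts — and verifying that when $\tau$ is primitive the character $\overline\tau$ restricted to $1+M^\perp$ is itself non-trivial, so that the two defining conditions genuinely interact — should yield a count of $2(|R^\times|-|R^\times|/|M^\perp|)-|R^\times|+|R^\times|/|M^\perp|$-type expression that produces $2|R||R^\times|-|R|^2$ after the same substitutions. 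I expect the bookkeeping in this inclusion–exclusion, rather than any conceptual difficulty, to be where the care is needed.
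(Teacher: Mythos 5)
Your proposal is correct and follows essentially the same route as the paper: Plancherel plus Theorem~\ref{thm: GKtwist}, the Gauss-sum magnitudes from Lemma~\ref{lem: gauss}, and then a count of the characters $\chi$ for which both $\chi$ and $\chi\tau$ are primitive, where the paper phrases your inclusion--exclusion for primitive $\tau$ as the observation that the non-primitive characters form a subgroup while $\{\chi:\chi\tau \textrm{ non-primitive}\}$ is a disjoint coset of it. One bookkeeping caveat: your final displayed expression $2(|R^\times|-|R^\times|/|M^\perp|)-|R^\times|+|R^\times|/|M^\perp|$ simplifies to $|R^\times|-|R^\times|/|M^\perp|$ rather than the needed count $|R^\times|-2|R^\times|/|M^\perp|$; the clean statement is that the count equals (number of primitive $\chi$) minus the coset size $|R^\times|/|M^\perp|$, with nothing to discard, since for primitive $\tau$ every character in that coset restricts non-trivially to $1+M^\perp$ and is therefore automatically primitive.
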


Note that, in the case when the twist $\tau$ is non-primitive, we can simply use Lemma~\ref{lem: link} and the full moment $\sum_{a\in R} \big|K_\tau(a)\big|^2=|R||R^\times|$ established in Proposition~\ref{prop: tK1}. However, we need a different approach in order to handle arbitrary twists.

\begin{proof}
By the Plancherel identity, we have
\begin{align*}\sum_{a\in R^\times} \big|K_\tau(a)\big|^2=\frac{1}{|R^\times|}\sum_{\chi\in \widehat{R^\times}} \:\Big|\sum_{a\in R^\times} \chi(a)\: K_\tau(a)\Big|^2=\frac{1}{|R^\times|}\sum_{\chi\in \widehat{R^\times}} \:\big|G(\chi)\: G(\chi\tau)\big|^2.
\end{align*}
We compute the right-hand side. Lemma~\ref{lem: gauss} gives 
\begin{align*}
\big|G(\chi)\big|=
\begin{cases} |R|^{1/2} & \textrm{ if } \chi  \textrm{ is primitive}\\
0 & \textrm{ otherwise}
\end{cases}
\end{align*}
and so
\[\frac{1}{|R^\times|}\sum_{\chi\in \widehat{R^\times}} \:\big|G(\chi)\: G(\chi\tau)\big|^2
=\frac{|R|^2}{|R^\times|} \:\big|\{\chi\in \widehat{R^\times}: \chi  \textrm{ and } \chi\tau  \textrm{ are primitive}\}\big|.\]

Consider first the case when the twist $\tau$ is non-primitive. Recall that, for a multiplicative character, to be non-primitive means to be trivial on the subgroup $1+M^\perp$. In this case the character subsets $\{\chi: \chi \textrm{ non-primitive}\}$ and $\{\chi: \chi\tau \textrm{ non-primitive}\}$ coincide. Therefore
\[\big|\{\chi\in \widehat{R^\times}: \chi  \textrm{ and } \chi\tau  \textrm{ are primitive}\}\big|=|R^\times|(1-|M^\perp|^{-1})\]
and we conclude that
\begin{align*}
\sum_{a\in R^\times} \big|K_\tau(a)\big|^2=|R|^2(1-|M^\perp|^{-1})=|R|\:(|R|-|M|)=|R||R^\times|.
\end{align*}

Consider now the case when the twist $\tau$ is primitive. Then the character subsets $\{\chi: \chi \textrm{ non-primitive}\}$ and $\{\chi: \chi\tau \textrm{ non-primitive}\}$ are disjoint, and they have the same cardinality. In fact, the former is a subgroup, while the latter is a non-trivial coset. In this case we have
\[\big|\{\chi\in \widehat{R^\times}: \chi  \textrm{ and } \chi\tau  \textrm{ are primitive}\}\big|=|R^\times|(1-2|M^\perp|^{-1})\]
and so
\begin{align*}
\sum_{a\in R^\times} \big|K_\tau(a)\big|^2=|R|^2(1-2|M^\perp|^{-1})=|R|\:(|R|-2|M|)=2|R||R^\times|-|R|^2.
\end{align*}
\end{proof}

Next, we turn to the second weighted moment. We distinguish two cases, according to whether the weighting character $\chi$ is primitive or not. 

\begin{thm}\label{thm: liutwist}
Let $\tau$ be any twist. For each primitive multiplicative character $\chi$ of $R$, the following holds:
\begin{align*}
\sum_{a\in R^\times} \chi(a)\: \big|K_\tau(a)\big|^2=\tau(-1)\: J(\chi\tau,\chi\overline{\tau})\:G(\chi) ^2.
\end{align*}
\end{thm}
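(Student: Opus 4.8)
The plan is to expand the squared modulus as a double sum over $R^\times\times R^\times$, interchange summations so that the sum over $a$ collapses into a single inner Gauss sum, and then organize the surviving terms into a Gauss sum times a Jacobi sum. Concretely, I would first write, exactly as in the proof of Proposition~\ref{prop: tK1},
\[
\sum_{a\in R^\times} \chi(a)\,\big|K_\tau(a)\big|^2=\sum_{u,v\in R^\times} \tau(u)\,\overline{\tau}(v)\,\psi(u-v)\sum_{a\in R^\times}\chi(a)\,\psi\big(a(u^{-1}-v^{-1})\big).
\]
The inner sum over $a$ is the Gauss sum of $\chi$ against the scaled character $w.\psi$, where $w=u^{-1}-v^{-1}$.

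The key step, which I expect to be the main obstacle, is showing that this inner Gauss sum \emph{vanishes unless $w$ is a unit}. This is where primitivity of $\chi$ and the Frobenius structure enter. For $w$ a non-unit, i.e.\ $w\in M$, I would argue by averaging over the minimal subgroup: for any $s\in 1+M^\perp$, substituting $a\mapsto sa$ gives
\[
\sum_{a\in R^\times}\chi(a)\,\psi(wa)=\chi(s)\sum_{a\in R^\times}\chi(a)\,\psi(wsa).
\]
Since $s-1\in M^\perp$ and $w\in M=(M^\perp)^\perp$ by the double-annihilator property of Lemma~\ref{lem: Fann}, we have $w(s-1)=0$, hence $ws=w$; as $\chi$ is primitive it is non-trivial on $1+M^\perp$, so choosing $s$ with $\chi(s)\neq 1$ forces the sum to vanish (this also covers $w=0$, i.e.\ $u=v$). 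When $w\in R^\times$, the substitution $a\mapsto aw^{-1}$ yields $\overline{\chi}(w)\,G(\chi)$. Thus only pairs with $u^{-1}-v^{-1}\in R^\times$, equivalently $u-v\in R^\times$, survive, and the expression becomes
\[
G(\chi)\sum_{\substack{u,v\in R^\times\\ u-v\in R^\times}} \tau(u)\,\overline{\tau}(v)\,\psi(u-v)\,\overline{\chi}(u^{-1}-v^{-1}).
\]

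Finally I would factor this remaining sum. Using $\overline{\chi}(u^{-1}-v^{-1})=\overline{\chi}(v-u)\,\chi(u)\chi(v)$, the summand rewrites as $\chi\tau(u)\,\chi\overline{\tau}(v)\,\psi(u-v)\,\overline{\chi}(v-u)$. I then substitute $s=u-v$, $\alpha=us^{-1}$, $\beta=vs^{-1}$, a bijection onto $s\in R^\times$ together with $\alpha,\beta\in R^\times$ satisfying $\alpha-\beta=1$. The $s$-part collapses because $\chi\tau(s)\,\chi\overline{\tau}(s)\,\overline{\chi}(s)=\chi(s)$, giving $\chi(-1)\,G(\chi)$; the $(\alpha,\beta)$-part becomes a Jacobi sum via $x=\alpha$, $y=-\beta$, producing $\chi(-1)\,\tau(-1)\,J(\chi\tau,\chi\overline{\tau})$ (using that $\chi(-1),\tau(-1)\in\{\pm1\}$ are real). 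The two factors of $\chi(-1)$ cancel, and multiplying by the outer $G(\chi)$ leaves precisely $\tau(-1)\,J(\chi\tau,\chi\overline{\tau})\,G(\chi)^2$, as claimed. The computational bookkeeping of the sign factors $\chi(-1)$ and $\tau(-1)$ is routine once the vanishing step is in place.
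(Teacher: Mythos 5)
Your proposal is correct and follows essentially the same route as the paper's proof: expand $\big|K_\tau(a)\big|^2$ as a double sum over $u,v\in R^\times$, swap summations so that the inner sum over $a$ becomes a Gauss sum against a scaled additive character, discard the non-unit scalings by the vanishing property of Gauss sums, and reassemble the surviving terms into $\tau(-1)\,J(\chi\tau,\chi\overline{\tau})\,G(\chi)^2$. The only notable differences are cosmetic: you work with the minus-sign expansion $\psi(u-v)$ instead of the paper's plus-sign one (obtained via $\overline{K_\tau(a)}=\tau(-1)K_{\overline{\tau}}(a)$), you prove the key vanishing step directly (averaging over $1+M^\perp$ and using $(M^\perp)^\perp=M$) where the paper invokes it as a known property of Gauss sums, and your final reparameterization $(s,\alpha,\beta)$ is the same computation as the paper's rescaling $J_c(\chi\tau,\chi\overline{\tau})=\chi^2(c)\,J(\chi\tau,\chi\overline{\tau})$ of the generalized Jacobi sums.
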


\begin{proof} For $a\in R^\times$, we write:
\begin{align*}
\big|K_\tau(a)\big|^2&=K_\tau(a)\: \overline{K_\tau(a)}=\tau(-1)\:K_\tau(a)\:K_{\overline{\tau}}(a)\\
&=\tau(-1)\:\sum_{u,v\in R^\times} \tau(u)\:\overline{\tau}(v)\: \psi(u+v)\: \psi(a(u^{-1}+v^{-1}))\\
&=\tau(-1)\:\sum_{c\in R}\:\sum_{\substack{u,v\in R^\times \\ u+v=c}} \tau(u)\:\overline{\tau}(v)\: \psi(c)\: \psi(ac(uv)^{-1})
\end{align*}
Therefore
\begin{align*}
\sum_{a\in R^\times}  \chi(a)\: \big|K_\tau(a)\big|^2&=\tau(-1)\:\sum_{c\in R}\:\sum_{\substack{u,v\in R^\times \\ u+v=c}} \tau(u)\:\overline{\tau}(v)\: \psi(c)\sum_{a\in R^\times}  \chi(a)\: \psi(ac(uv)^{-1})\\
&=\tau(-1)\:\sum_{c\in R}\:\sum_{\substack{u,v\in R^\times \\ u+v=c}} \tau(u)\:\overline{\tau}(v)\: \chi(uv)\:\psi(c)\sum_{a\in R^\times}  \chi(a)\: \psi(ac)
\end{align*}
by making the change of variable $a:=auv$ in the inner sum. The latter inner sum is the Gauss sum, $G(\chi, c.\psi)$, and the sum over $u$ and $v$ can be collected into a generalized Jacobi sum:
\begin{align*}
\sum_{a\in R^\times}  \chi(a)\: \big|K_\tau(a)\big|^2=\tau(-1)\:\sum_{c\in R} J_c(\chi\tau,\chi\overline{\tau})\: G(\chi, c.\psi)\:\psi(c).
\end{align*}
Recall now the vanishing property of Gauss sums. As $\chi$ is primitive, $G(\chi, c.\psi)=0$ unless $c.\psi$ is primitive as well; in other words, $c\in R^\times$. Moreover, for $c\in R^\times$ we can rescale as follows:
\[G(\chi, c.\psi)=\chi(c^{-1}) \: G(\chi, \psi), \qquad J_c(\chi\tau,\chi\overline{\tau})=\chi^2(c)\: J(\chi\tau,\chi\overline{\tau}).\]
Summarizing, we have 
\begin{align*}
\sum_{a\in R^\times}  \chi(a)\: \big|K_\tau(a)\big|^2&=\tau(-1)\:\sum_{c\in R^\times} J_c(\chi\tau,\chi\overline{\tau})\: G(\chi, c.\psi)\:\psi(c)\\
&=\tau(-1)\: \sum_{c\in R^\times}\: \chi^2(c)\: J(\chi\tau,\chi\overline{\tau})\: \chi(c^{-1}) \: G(\chi)\: \psi(c)\\
&=\tau(-1)\: G(\chi)^2\:J(\chi\tau,\chi\overline{\tau})
\end{align*}
as claimed.
\end{proof}

We handle the second case, when the weighting character $\chi$ is non-primitive, under the assumption that the twist $\tau$ is non-primitive, and $R$ has odd characteristic. We start with the following averaging lemma. 

\begin{lem}\label{lem: cosetsum}
Assume $R$ is not a field, and has odd characteristic, and let $\tau$ be a non-primitive twist. Then for each $a\in R^\times$ we have
 \[\frac{1}{|M^\perp|}\sum_{b\in 1+M^\perp}  \big|K_\tau(ab)\big|^2=|R|\:(1+\sigma(a)).\]
\end{lem}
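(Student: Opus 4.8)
The plan is to expand $|K_\tau(ab)|^2$ for each $b\in 1+M^\perp$, sum over the coset, and extract the constraint that the phase contributions impose. Since $\tau$ is non-primitive, it is trivial on $1+M^\perp$, so the twist factors disappear when we average over $b$. The payoff should be a counting problem governed by the number of solutions of a quadratic system, which is exactly what Lemma~\ref{lem: solutions} computes in terms of the quadratic character $\sigma$ — hence the expected shape $|R|(1+\sigma(a))$.

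\textbf{Key steps.} First I would write, using $\overline{K_\tau(a)}=\tau(-1)K_{\overline\tau}(a)$,
\[
\big|K_\tau(ab)\big|^2=\tau(-1)\sum_{u,v\in R^\times}\tau(u)\:\overline\tau(v)\:\psi(u+v)\:\psi\big(ab(u^{-1}+v^{-1})\big).
\]
Next I would sum over $b\in 1+M^\perp$, writing $b=1+r$ with $r\in M^\perp$, so that $r^2=0$ and $\psi(abru^{-1})$ type terms linearize. Using $\tau(b)=1$ (non-primitivity) the twist does not interfere with the $b$-sum, so the only $b$-dependence is in $\psi\big(ab(u^{-1}+v^{-1})\big)=\psi\big(a(u^{-1}+v^{-1})\big)\cdot\psi\big(ar(u^{-1}+v^{-1})\big)$. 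Summing over $r\in M^\perp$ and invoking Lemma~\ref{lem: primsum}, the inner sum $\sum_{r\in M^\perp}\psi\big(ar(u^{-1}+v^{-1})\big)$ equals $|M^\perp|$ when $a(u^{-1}+v^{-1})\in (M^\perp)^\perp=M$ and vanishes otherwise. Since $a\in R^\times$, the surviving condition is $u^{-1}+v^{-1}\in M$, i.e. $u+v\in M\cdot uv$, equivalently $u+v\in M$ after clearing the unit $uv$. Thus after dividing by $|M^\perp|$ we are left with
\[
\frac{1}{|M^\perp|}\sum_{b\in 1+M^\perp}\big|K_\tau(ab)\big|^2
=\tau(-1)\sum_{\substack{u,v\in R^\times\\ u+v\in M}}\tau(u)\:\overline\tau(v)\:\psi(u+v)\:\psi\big(a(u^{-1}+v^{-1})\big).
\]

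\textbf{Finishing.} On the surviving set $u+v\in M$, I would reparameterize by $c=u+v\in M$ and rewrite $u^{-1}+v^{-1}=(u+v)/(uv)=c(uv)^{-1}$, so $\psi(u+v)\psi(a(u^{-1}+v^{-1}))=\psi(c)\psi(ac(uv)^{-1})$. The cleanest route is to organize the sum by fixing the value of $uv=:d\in R^\times$ and of $u+v=:c\in M$; by Lemma~\ref{lem: solutions}, the system $u+v=c$, $uv=d$ has $1+\sigma(\Delta)$ unit solutions where $\Delta=c^2-4d\equiv -4d$ mod $M$, so $\sigma(\Delta)=\sigma(-d)$. The twist contributes $\tau(u)\overline\tau(v)$, and since $\tau$ is trivial on $1+M^\perp$ I expect the pair of solutions to contribute a controlled factor depending only on $d$ and $c$; summing over $c\in M$ collapses the $\psi$-factors by orthogonality over $M$, and summing over $d\in R^\times$ against $\sigma(-d)$ produces the quadratic-character dependence. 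The main obstacle will be the bookkeeping of the twist $\tau$ on the two solutions $u,v$ and verifying that, after summing over the coset and over $c\in M$, everything collapses to $|R|(1+\sigma(a))$ — in particular correctly isolating the factor $1+\sigma(a)$ rather than some $1+\sigma(-d)$; this presumably comes from a final change of variable tying $a$ to the discriminant, which is the step I would check most carefully.
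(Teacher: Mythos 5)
Your expansion of $|K_\tau(ab)|^2$ and your coset-averaging step are both correct, and they match the paper's mechanism: write $b=1+r$ with $r\in M^\perp$, apply Lemma~\ref{lem: primsum} together with $(M^\perp)^\perp=M$, and conclude that only the pairs with $u+v\in M$ survive. The genuine gap is in your finishing step, and it traces back to which conjugation formula you chose. Using $\overline{K_\tau(ab)}=\tau(-1)K_{\overline{\tau}}(ab)$, the twist enters your double sum as $\tau(u)\,\overline{\tau}(v)=\tau(uv^{-1})$, a function of the \emph{ratio} of $u$ and $v$. This is fatal for the grouping by $c=u+v$, $d=uv$: the two solutions of the system $u+v=c$, $uv=d$ contribute the conjugate values $\tau(uv^{-1})$ and $\tau(vu^{-1})$, and since $c\in M$ one has $uv^{-1}=-1+cud^{-1}\in -(1+M)$. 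So these factors involve $\tau$ on the subgroup $1+M$, where non-primitivity gives you nothing ($\tau$ is only known to be trivial on the much smaller subgroup $1+M^\perp$). Consequently the sum over the solution set is $2\,\mathrm{Re}\,\tau(uv^{-1})$ when solutions exist, not $(1+\sigma(-d))$ times a factor that can be pulled out; Lemma~\ref{lem: solutions} cannot be applied as a pure count, and since $\tau(uv^{-1})$ depends on $c$ as well as $d$, you also cannot afterwards sum $\psi\big(c(1+ad^{-1})\big)$ over $c\in M$ by orthogonality. Note also that in your expansion no $\tau(b)$ appears at all, so your invocation of non-primitivity in the $b$-sum is vacuous; non-primitivity has to do real work somewhere, and in your setup the only place left is exactly the step that breaks.

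The paper sidesteps this by using the other conjugation formula, valid because $ab\in R^\times$: $\overline{K_\tau(ab)}=\tau(-(ab)^{-1})\,K_\tau(ab)$, hence $|K_\tau(ab)|^2=\tau(-ab^{-1})\,K_\tau(ab)^2$. In this expansion the twist appears as $\tau(uv)=\tau(d)$, a symmetric function of $(u,v)$, hence constant on the solution set of each system. (The paper also first substitutes $u:=au$, so that $a$ multiplies $u+v$ and $b$ multiplies $u^{-1}+v^{-1}$.) With that one change, your outline goes through: group by $(c,d)$, count solutions by Lemma~\ref{lem: solutions}, sum over $c\in M$ via Lemma~\ref{lem: primsum} to force $d$ into a coset of $M^\perp$ --- this, rather than any change of variable tying $a$ to the discriminant, is where $1+\sigma(a)$ emerges, since $\sigma$ and $\tau$ are constant on such cosets --- and finally the prefactor $\tau(-a)$ (which is where non-primitivity is used in the $b$-step) cancels against $\tau(-a^{-1})$. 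So the gap is repairable, but only by changing the algebraic form of the expansion; as written, the plan stalls precisely at the step you flagged as the one to check.
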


\begin{proof} To start off, let $b\in R^\times$. We write
\[K_\tau(ab)=\sum_{u\in R^\times} \tau(u)\: \psi(u)\:\psi(abu^{-1})=\tau(a)\sum_{u\in R^\times} \tau(u)\: \psi(au)\:\psi(bu^{-1})\]
by making a change of variable $u:=au$. Next, we have
\begin{align*}
\big|K_\tau(ab)\big|^2&=K_\tau(ab)\: \overline{K_\tau(ab)}=\tau(-(ab)^{-1})\:K_\tau(ab)^2\\
&=\tau(-ab^{-1})\sum_{u,v\in R^\times} \tau(uv)\: \psi(a(u+v))\: \psi(b(u^{-1}+v^{-1})).
\end{align*}
As the twist $\tau$ is non-primitive, $\tau(-ab^{-1})=\tau(-a)$ whenever $b\in 1+M^\perp$. Furthermore,
\begin{align*}
\sum_{b\in 1+M^\perp}\psi(b(u^{-1}+v^{-1}))&=\psi(u^{-1}+v^{-1})\sum_{b\in M^\perp}\psi(b(u^{-1}+v^{-1}))\\
&=\psi(u^{-1}+v^{-1})\: |M^\perp|\: [u^{-1}+v^{-1}\in M]
\end{align*}
by Lemma~\ref{lem: primsum}, and using $(M^\perp)^\perp=M$. Note also that $[u^{-1}+v^{-1}\in M]=[u+v\in M]$. At this point, we deduce
\begin{align*}
\sum_{b\in 1+M^\perp}  \big|K_\tau(ab)\big|^2=\tau(-a) \: |M^\perp| \sum_{\substack{u,v\in R^\times\\u+v\in M}} \tau(uv)\: \psi(a(u+v)+(u^{-1}+v^{-1}))
\end{align*}
In the latter sum, we group terms according to their sum and product:
\begin{align*}
\sum_{\substack{u,v\in R^\times\\u+v\in M}} \tau(uv)\: \psi(a(u+v))\: \psi(u^{-1}+v^{-1})=\sum_{c\in M}  \sum_{d\in R^\times} \sum_{\substack{u,v\in R^\times\\u+v=c, uv=d}} \tau(d)\: \psi(ac+cd^{-1}).
\end{align*}
 Now, by Lemma~\ref{lem: solutions}, we know the following: for $c\in M$ and $d\in R^\times$, the system $u+v=c$, $uv=d$ has $1+\sigma(-d)$ unit solutions. So we can write the latter triple sum as
 \begin{align*}
\sum_{c\in M}  \sum_{d\in R^\times} (1+\sigma(-d))\: \tau(d)\: \psi(ac+cd^{-1})=\sum_{c\in M}  \sum_{d\in R^\times} (1+\sigma(d))\: \tau(-d^{-1})\: \psi(c(a-d))
\end{align*}
after a change of variable $d:=-d^{-1}$. Furthermore,
\[\sum_{c\in M} \psi(c(a-d))=|M|\:[a-d\in M^\perp]\]
by Lemma~\ref{lem: primsum} once again. Summarizing our computation so far, we have
\begin{align*}
 \frac{1}{|M^\perp|}\sum_{b\in 1+M^\perp} \big|K_\tau(ab)\big|^2&=\tau(-a)\: |M| \sum_{d\in a+M^\perp} (1+\sigma(d))\: \tau(-d^{-1}).
 \end{align*}
 The inner sum has constant terms: both $\sigma$ and $\tau$ are trivial on $1+M^\perp$, so $\sigma(d)=\sigma(a)$ and $\tau(-d^{-1})=\tau(-a^{-1})$ whenever $d\in a+M^\perp$. Therefore
\begin{align*}
\frac{1}{|M^\perp|}\sum_{b\in 1+M^\perp} \big|K_\tau(ab)\big|^2&=\tau(-a)\: |M| |M^\perp|\:(1+\sigma(a))\: \tau(-a^{-1})=|R|\:(1+\sigma(a))
\end{align*}
 as desired.
\end{proof}

It is not hard to give a new proof of Theorem~\ref{thm: tK2}(ii) by using Lemma~\ref{lem: cosetsum}. We leave this as a puzzle for the reader. Our current goal is the following second weighted moment computation.

\begin{thm}\label{thm: liutwist2}
Assume $R$ is not a field, and has odd characteristic, and let $\tau$ be a non-primitive twist. Let $\chi$ be a non-primitive multiplicative character of $R$. Then
\begin{align*}\sum_{a\in R^\times} \chi(a)\: \big|K_\tau(a)\big|^2=\begin{cases} |R^\times||R| & \textrm{ if } \chi=\ct \textrm{ or } \chi=\sigma, \\ 0 & \textrm{ otherwise}.\end{cases}\end{align*}
\end{thm}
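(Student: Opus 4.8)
The plan is to exploit the averaging Lemma~\ref{lem: cosetsum} by partitioning $R^\times$ into cosets of the subgroup $1+M^\perp$, on each of which the non-primitive weight $\chi$ is constant. Indeed, $\chi$ being non-primitive means precisely that it is trivial on $1+M^\perp$; and the quadratic character $\sigma$, whose conductor is $M\supseteq M^\perp$, is likewise trivial on $1+M^\perp$. Hence both $\chi$ and $\sigma$ descend to characters of the quotient group $Q=R^\times/(1+M^\perp)$, whose order is $|R^\times|/|M^\perp|$.

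First I would group the sum by cosets. Choosing a representative $a$ for each class in $Q$, and using that $\chi(ab)=\chi(a)$ for $b\in 1+M^\perp$,
\[\sum_{a\in R^\times} \chi(a)\: \big|K_\tau(a)\big|^2 = \sum_{a\in Q} \chi(a) \sum_{b\in 1+M^\perp} \big|K_\tau(ab)\big|^2.\]
Lemma~\ref{lem: cosetsum} evaluates the inner sum as $|M^\perp|\:|R|\:(1+\sigma(a))$, a quantity that is well-defined on $Q$ precisely because $\sigma$ too is trivial on $1+M^\perp$. Substituting back gives
\[\sum_{a\in R^\times} \chi(a)\: \big|K_\tau(a)\big|^2 = |M^\perp|\:|R|\:\Big(\sum_{a\in Q}\chi(a)+\sum_{a\in Q}(\chi\sigma)(a)\Big).\]

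It then remains to apply orthogonality on $Q$. One has $\sum_{a\in Q}\chi(a)=|Q|\:[\chi=\ct]$ and $\sum_{a\in Q}(\chi\sigma)(a)=|Q|\:[\chi\sigma=\ct]=|Q|\:[\chi=\sigma]$, the last step using $\sigma^2=\ct$. Since $|M^\perp|\:|Q|=|R^\times|$, and the two conditions $\chi=\ct$ and $\chi=\sigma$ are mutually exclusive (as $\sigma\neq\ct$), the whole expression equals $|R^\times||R|$ when $\chi$ is $\ct$ or $\sigma$, and $0$ otherwise---exactly the claimed dichotomy. I do not expect a genuine obstacle here: once Lemma~\ref{lem: cosetsum} is available, the argument is a clean descent to $Q$ followed by orthogonality. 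The only point demanding care is verifying that \emph{every} factor in the summand---both $\chi$ and the weight $1+\sigma(a)$ coming from the lemma---is constant on cosets of $1+M^\perp$, which is what legitimizes the passage to $Q$.
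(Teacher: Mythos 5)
Your proof is correct and takes essentially the same route as the paper: both decompose the sum over cosets of $1+M^\perp$ (the paper phrases this as summing over $(R/M^\perp)^\times$, which is the same quotient group), evaluate the inner sums by Lemma~\ref{lem: cosetsum}, and conclude by character orthogonality. The only cosmetic difference is that the paper disposes of the case $\chi=\ct$ separately via Corollary~\ref{cor: 2ndtwist}, whereas your Iverson-bracket formulation handles all cases uniformly.
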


\begin{proof} The case when $\chi$ is trivial is known from Corollary~\ref{cor: 2ndtwist}, so let us assume that $\chi$ is non-trivial in what follows. As $\chi$ is non-primitive, there is a multiplicative character $\chi'$ on the quotient ring $R/M^\perp$ which induces $\chi$; furthermore, $\chi'$ is non-trivial. Using Lemma~\ref{lem: cosetsum}, we have:
\begin{align*}
\sum_{a\in R^\times} \chi(a)\: \big|K_\tau(a)\big|^2&=\sum_{v\in (R/M^\perp)^\times} \chi'(v) \sum_{b\in R^\times,\: [b]=v} \big|K_\tau(b)\big|^2\\
&=|R||M^\perp| \sum_{v\in (R/M^\perp)^\times} \chi'(v) (1+\sigma'(v))
\end{align*}
where $\sigma'$ is the quadratic character on $R/M^\perp$. Note that $\sum_{v\in (R/M^\perp)^\times} \chi'(v)=0$. Moreover, $\sum_{v\in (R/M^\perp)^\times}\: (\chi'\sigma')(v)=0$ as well--and so the second weighted moment vanishes--except when $\chi'=\sigma'$ on $R/M^\perp$. This amounts to $\chi=\sigma$, and in this case we get
\[\sum_{a\in R^\times} \chi(a)\: \big|K_\tau(a)\big|^2=|R||M^\perp||(R/M^\perp)^\times|=|R||R^\times|.\]
This is, of course, in complete agreement with the case $\chi=\ct$, given the vanishing on squares (Theorem~\ref{thm: tK2}(ii)).
\end{proof}

To end, we deduce the following variations on the above second weighted moment computations.

\begin{cor}
Let $\tau$ be a non-primitive twist. 

\begin{itemize}
\item[(i)] Let $\chi$ be a primitive multiplicative character of $R$. Then:
\begin{align*}
\sum_{a\in R^\times} \chi(a)\: K_\tau(a)^2=J(\chi,\chi\tau^2)\:G(\chi\tau) ^2.
\end{align*}
\item[(ii)] Let $\chi$ be a non-primitive multiplicative character of $R$. If $R$ is not a field, and has odd characteristic, then:
\begin{align*}\sum_{a\in R^\times} \chi(a)\: K_\tau(a)^2=\begin{cases} \tau(-1)\:|R^\times||R| & \textrm{ if } \chi=\overline{\tau} \textrm{ or } \chi=\sigma\overline{\tau}, \\ 0 & \textrm{ otherwise}.\end{cases}\end{align*}
\end{itemize}
\end{cor}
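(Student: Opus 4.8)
The plan is to reduce both parts to the already-computed weighted \emph{absolute} second moments of Theorem~\ref{thm: liutwist} and Theorem~\ref{thm: liutwist2}, by exploiting the pointwise relation between $K_\tau(a)^2$ and $\big|K_\tau(a)\big|^2$ on units. Concretely, the conjugate formula $\overline{K_\tau(a)}=\tau(-a^{-1})\:K_\tau(a)$, valid for $a\in R^\times$, gives $\big|K_\tau(a)\big|^2=\tau(-a^{-1})\:K_\tau(a)^2$; since $\tau$ is a homomorphism and $\tau(-1)=\pm 1$, inverting this yields
\[
K_\tau(a)^2=\tau(-1)\:\tau(a)\:\big|K_\tau(a)\big|^2 \qquad (a\in R^\times).
\]
Multiplying by $\chi(a)$ and summing over $R^\times$ therefore produces the clean identity
\[
\sum_{a\in R^\times} \chi(a)\: K_\tau(a)^2=\tau(-1) \sum_{a\in R^\times} (\chi\tau)(a)\:\big|K_\tau(a)\big|^2,
\]
which converts the second weighted moment of $K_\tau^2$ into a second weighted \emph{absolute} moment carrying the new weight $\chi\tau$ and an overall sign $\tau(-1)$. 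Everything then comes down to feeding the weight $\chi\tau$ into the appropriate earlier theorem.

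For part (i), I would first check that $\chi\tau$ is primitive. Since $M^\perp$ is the unique minimal ideal, a multiplicative character is non-primitive exactly when it is trivial on $1+M^\perp$; as $\chi$ is primitive (non-trivial there) and $\tau$ non-primitive (trivial there), the product $\chi\tau$ is non-trivial on $1+M^\perp$, hence primitive. Applying Theorem~\ref{thm: liutwist} with weighting character $\chi\tau$ then gives $\sum_{a\in R^\times}(\chi\tau)(a)\:\big|K_\tau(a)\big|^2=\tau(-1)\:J(\chi\tau^2,\chi)\:G(\chi\tau)^2$, using $(\chi\tau)\overline{\tau}=\chi$. Substituting into the identity above, the two factors of $\tau(-1)$ cancel (as $\tau(-1)^2=1$), and the symmetry of the Jacobi sum rewrites $J(\chi\tau^2,\chi)=J(\chi,\chi\tau^2)$, producing exactly $J(\chi,\chi\tau^2)\:G(\chi\tau)^2$.

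For part (ii), the parallel first step is that $\chi\tau$ is now non-primitive, being a product of two characters trivial on $1+M^\perp$. With $R$ not a field and of odd characteristic, Theorem~\ref{thm: liutwist2} applies to the non-primitive weight $\chi\tau$ and shows that $\sum_{a\in R^\times}(\chi\tau)(a)\:\big|K_\tau(a)\big|^2$ equals $|R^\times||R|$ when $\chi\tau=\ct$ or $\chi\tau=\sigma$, and vanishes otherwise. Translating the conditions, $\chi\tau=\ct$ means $\chi=\overline{\tau}$, and $\chi\tau=\sigma$ means $\chi=\sigma\overline{\tau}$; one checks both candidate weights are themselves non-primitive, consistent with the hypothesis on $\chi$. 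Multiplying through by $\tau(-1)$ then delivers the stated two-case formula. The only genuinely substantive points are the pointwise identity of the first paragraph and the primitivity bookkeeping for $\chi\tau$; neither presents a real obstacle, so I expect the proof to be very short once these are recorded.
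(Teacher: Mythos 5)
Your proposal is correct and takes essentially the same route as the paper: the pointwise identity $K_\tau(a)^2=\tau(-a)\,\big|K_\tau(a)\big|^2$ for $a\in R^\times$ converts the weighted moment into the weighted absolute moment with weight $\chi\tau$, to which Theorem~\ref{thm: liutwist} (part (i)) and Theorem~\ref{thm: liutwist2} (part (ii)) are then applied. The details you add---the primitivity bookkeeping for $\chi\tau$ and the symmetry $J(\chi\tau^2,\chi)=J(\chi,\chi\tau^2)$---are precisely the steps the paper leaves implicit.
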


\begin{proof} For each $a\in R^\times$ we have
\begin{align*}
K_\tau(a)^2=\tau(-a)\:\overline{K_\tau(a)}\:K_\tau(a)=\tau(-a)\:\big|K_\tau(a)\big|^2
\end{align*}
and so
\begin{align*}
\sum_{a\in R^\times} \chi(a)\: K_\tau(a)^2=\tau(-1)\sum_{a\in R^\times} (\chi\tau)(a) \:\big|K_\tau(a)\big|^2.
\end{align*}

(i) If $\chi$ is primitive and $\tau$ is non-primitive, then $\chi\tau$ is primitive. Applying Theorem~\ref{thm: liutwist} on the right-hand sum leads to the desired result.

(ii) If $\chi$ is non-primitive and $\tau$ is non-primitive, then $\chi\tau$ is non-primitive as well. Now apply Theorem~\ref{thm: liutwist2} on the right-hand sum.
\end{proof}

\section{The fourth moment}\label{sec: 4mom}
\begin{datum}
$R$ is a local Frobenius ring, and $\psi$ is a fixed primitive character of $R$.
\end{datum}

Our interest in Theorems~\ref{thm: liutwist} and ~\ref{thm: liutwist2} is largely motivated by the observation that they offer a pathway to the computation of the fourth Kloosterman moment, in the case of non-primitive twists. Namely, the following holds.

\begin{thm}\label{thm:4mom}
Assume $R$ is not a field, and has odd characteristic, and let $\tau$ be a non-primitive twist. Then
\begin{align*}
\sum_{a\in R^\times} \big|K_\tau(a)\big|^4&=3|R^\times||R|^2.
\end{align*}
\end{thm}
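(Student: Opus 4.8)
The plan is to apply the Plancherel identity to the real, non-negative function $f=|K_\tau|^2$ on $R^\times$, which converts the fourth absolute moment into an average of squared second weighted moments. Since $|f(a)|^2=|K_\tau(a)|^4$, the Plancherel identity gives
\[\sum_{a\in R^\times} |K_\tau(a)|^4=\frac{1}{|R^\times|}\sum_{\chi\in\widehat{R^\times}}\Big|\sum_{a\in R^\times}\chi(a)\,|K_\tau(a)|^2\Big|^2,\]
and the inner quantity $\sum_{a\in R^\times}\chi(a)\,|K_\tau(a)|^2$ is exactly the second weighted moment evaluated in Theorems~\ref{thm: liutwist} and~\ref{thm: liutwist2}. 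The whole computation therefore reduces to splitting the character sum over $\chi$ according to whether $\chi$ is primitive or non-primitive, inserting the known values, and tallying the totals.

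For non-primitive $\chi$, Theorem~\ref{thm: liutwist2} says the weighted moment equals $|R^\times||R|$ when $\chi=\ct$ or $\chi=\sigma$ (both of which are non-primitive, as $R$ is not a field), and vanishes otherwise. These two characters contribute $\frac{1}{|R^\times|}\cdot 2\,(|R^\times||R|)^2=2|R^\times||R|^2$. It then remains to show that the primitive characters contribute the missing $|R^\times||R|^2$.

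For primitive $\chi$, Theorem~\ref{thm: liutwist} gives the weighted moment $\tau(-1)\,J(\chi\tau,\chi\overline{\tau})\,G(\chi)^2$, so its squared modulus is $|J(\chi\tau,\chi\overline{\tau})|^2\,|G(\chi)|^4=|R|^2\,|J(\chi\tau,\chi\overline{\tau})|^2$ by Lemma~\ref{lem: gauss}(ii). The crux is to evaluate this Jacobi sum uniformly. Because $\tau$ and $\overline\tau$ are trivial on $1+M^\perp$ while $\chi$ is not, both $\chi\tau$ and $\chi\overline{\tau}$ are primitive, and their product $(\chi\tau)(\chi\overline{\tau})=\chi^2$ is \emph{also} primitive: the subgroup $1+M^\perp$ has odd order (odd characteristic), so squaring is an automorphism of its character group, and $\chi$ non-trivial on $1+M^\perp$ forces $\chi^2$ non-trivial there as well. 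With $\chi^2$ primitive, Lemma~\ref{lem: jacobi}(iii) yields $G(\chi\tau)\,G(\chi\overline{\tau})=J(\chi\tau,\chi\overline{\tau})\,G(\chi^2)$; taking absolute values of the three primitive Gauss sums gives $|J(\chi\tau,\chi\overline{\tau})|=|R|^{1/2}$, hence $|J(\chi\tau,\chi\overline{\tau})|^2=|R|$ for \emph{every} primitive $\chi$.

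Finally I would count the primitive multiplicative characters — those non-trivial on $1+M^\perp$ — of which there are $|R^\times|-|R^\times|/|M^\perp|=|R^\times|^2/|R|$, using $|M||M^\perp|=|R|$ from Lemma~\ref{lem: Fann}. Assembling, the primitive characters contribute
\[\frac{1}{|R^\times|}\cdot|R|^2\cdot\Big(|R|\cdot\frac{|R^\times|^2}{|R|}\Big)=|R^\times||R|^2,\]
and adding the non-primitive contribution $2|R^\times||R|^2$ produces the claimed $3|R^\times||R|^2$. I expect the only genuinely delicate point to be the primitivity of $\chi^2$: it is precisely the odd-characteristic hypothesis, making squaring injective on the characters of $1+M^\perp$, that guarantees the uniform value $|J|^2=|R|$ and prevents any primitive $\chi$ from dropping out of the count.
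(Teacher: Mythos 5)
Your proposal is correct and follows essentially the same route as the paper's proof: Plancherel applied to $|K_\tau|^2$, splitting over primitive versus non-primitive weighting characters, evaluating the primitive contribution via Theorem~\ref{thm: liutwist}, Lemma~\ref{lem: jacobi}(iii) and the odd-order argument showing $\chi^2$ is primitive, and the non-primitive contribution via Theorem~\ref{thm: liutwist2}. The character count $|R^\times|-|R^\times|/|M^\perp|=|R^\times|^2/|R|$ and the final tally match the paper's computation exactly.
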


\begin{proof}
By the Plancherel identity, we have
\[\sum_{a\in R^\times} \big|K_\tau(a)\big|^4=\sum_{a\in R^\times} \Big|\big|K_\tau(a)\big|^2\Big|^2=\frac{1}{|R^\times|}\sum_{\chi\in \widehat{R^\times}} \:\Big|\sum_{a\in R^\times} \chi(a)\: \big|K_\tau(a)\big|^2\Big|^2.\]
For each primitive multiplicative character $\chi$, we have
\[\Big|\sum_{a\in R^\times} \chi(a)\: \big|K_\tau(a)\big|^2\Big|=\big|J(\chi\tau,\chi\overline{\tau})\big|\:\big|G(\chi)\big| ^2\] 
by Theorem~\ref{thm: liutwist}. Recall from Lemma~\ref{lem: gauss}(ii) that $|G(\chi)|=|R|^{1/2}$ since $\chi$ is primitive. By Lemma~\ref{lem: jacobi}(iii),
\[G(\chi\tau)\: G(\chi\overline{\tau})=J(\chi\tau,\chi\overline{\tau})\:G(\chi^2)\]
provided that $\chi^2$ is primitive. Let us show that this is indeed the case. Arguing by contradiction, assume $\chi^2$ is non-primitive. This means that $\chi^2=\ct$ on the subgroup $1+M^\perp$. But the size of $1+M^\perp$ is odd, and so $(1+M^\perp)^2=1+M^\perp$. It follows that $\chi=\ct$ on $1+M^\perp$, which contradicts the primitivity of $\chi$.

As $\chi$ is primitive and $\tau$ is non-primitive, both $\chi\tau$ and $\chi\overline{\tau}$ are primitive as well. Thus $|G(\chi\tau)|=|G(\chi\overline{\tau})|=|G(\chi^2)|=|R|^{1/2}$, and we deduce that $|J(\chi\tau,\chi\overline{\tau})|=|R|^{1/2}$. In summary, for each primitive multiplicative character $\chi$ we have
\[\Big|\sum_{a\in R^\times} \chi(a)\: \big|K_\tau(a)\big|^2\Big|=|R|^{3/2}.\] 
Thus
\begin{align*}
\sum_{\substack{\chi\in \widehat{R^\times}\\ \chi \textrm{ primitive}}} \:\Big|\sum_{a\in R^\times} \chi(a)\: \big|K_\tau(a)\big|^2\Big|^2&=|R|^{3}\:\big|\{\chi\in \widehat{R^\times}\: : \: \chi \textrm{ primitive}\}\big|\\
&=|R|^3|R^\times|(1-|M^\perp|^{-1})=|R|^2\:|R^\times|^2. 
\end{align*}
On the other hand, the contribution from non-primitive multiplicative characters is easily computed thanks to Theorem~\ref{thm: liutwist2}:
\begin{align*}
\sum_{\substack{\chi\in \widehat{R^\times}\\ \chi \textrm{ non-primitive}}} \:\Big|\sum_{a\in R^\times} \chi(a)\: \big|K_\tau(a)\big|^2\Big|^2&=2|R|^2\:|R^\times|^2.
\end{align*}
We conclude that
\[\sum_{a\in R^\times} \big|K_\tau(a)\big|^4=\frac{1}{|R^\times|}(|R|^2\:|R^\times|^2+2|R|^2\:|R^\times|^2)=3 |R|^2|R^\times|\]
as desired.
\end{proof}

\bigskip

\end{document}